\newtheorem{theorem}{Theorem}
\newtheorem{lemma}{Lemma}
\newtheorem{remark}{Remark}
\newtheorem{corollary}{Corollary}
\newtheorem{proposition}{Proposition}
\newtheorem{assumption}{Assumption}
\DeclareMathOperator*{\argmax}{arg\,max}
\newcommand{\probability}{\operatorname{\mathbb{P}}\probarg}
\DeclarePairedDelimiterX{\probarg}[1]{(}{)}{%
  \ifnum\currentgrouptype=16 \else\begingroup\fi
  \activatebar#1
  \ifnum\currentgrouptype=16 \else\endgroup\fi
}
\newcommand{\activatebar}{%
  \begingroup\lccode`\~=`\|
  \lowercase{\endgroup\let~}\innermid 
  \mathcode`|=\string"8000
}
\newcommand{\LimitN}{\overset{N\to\infty}{\longrightarrow}}
\newcommand{\LimitD}{\overset{d}{\longrightarrow}}
\newcommand{\LimitP}{\overset{\mathbb{P}}{\longrightarrow}}
\let\oldfrac\frac
\renewcommand{\frac}[2]{%
  \mathchoice
    {\oldfrac{#1}{#2}}
    {#1/#2}
    {#1/#2}
    {#1/#2}
}
\title{Maximum waiting time in heavy-tailed fork-join queues}
\author{Dennis Schol, Maria Vlasiou, and Bert Zwart}
\date{November 2022}
\begin{document}

\maketitle
\begin{abstract}
    In this paper, we study the maximum waiting time $\max_{i\leq N}W_i(\cdot)$ in an $N$-server fork-join queue with heavy-tailed services as $N\to\infty$. The service times are the product of two random variables. One random variable has a regularly varying tail probability and is the same among all $N$ servers, and one random variable is Weibull distributed and is independent and identically distributed among all servers. This setup has the physical interpretation that if a job has a large size, then all the subtasks have large sizes, with some variability described by the Weibull-distributed part. We prove that after a temporal and spatial scaling, the maximum waiting time process converges in $D[0,T]$ to the supremum of an extremal process with negative drift. The temporal and spatial scaling are of order $\tilde{L}(b_N)b_N^{\frac{\beta}{(\beta-1)}}$, where $\beta$ is the shape parameter in the regularly varying distribution, $\tilde{L}(x)$ is a slowly varying function, and $(b_N,N\geq 1)$ is a sequence for which holds that $\max_{i\leq N}A_i/b_N\LimitP 1$, as $N\to\infty$, where $A_i$ are i.i.d.\ Weibull-distributed random variables. Finally, we prove steady-state convergence.
\end{abstract}

\section{Introduction}\label{sec: intro}


The fork-join queue is a useful tool to model streams of jobs, consisting of subtasks, in parallel systems. A key quantity of interest is the behavior of the longest queue. In this paper, we investigate a fork-join queue with $N$ servers, where each of these servers has to complete a subtask of an incoming task. We assume that $N$ is large, and we investigate the longest waiting time among the $N$ subtasks. Moreover, we assume that service times are mutually dependent, and can be written as a product of two random variables, where one term is independent and identically distributed for all servers, and has a Weibull-like tail, while the other term is the same for all servers and has a regularly varying tail. This describes the situation that if a job has a large size, all the subtasks also have a large size, where the fluctuation is described by the Weibull-like distributed random variable.

We obtain a convergence result for the rescaled transient maximum waiting time $\max_{i\leq N}W_i(tc_N)/c_N$ as $N\to\infty$, after choosing the proper temporal and spatial scaling $(c_N,N\geq 1)$. This maximum waiting time converges in distribution to a process which is the supremum of Fr\'echet-distributed random variables minus a drift term. The temporal and spatial scaling $c_N$ depends on the extreme-value scaling of $N$ independent Weibull-distributed random variables, a slowly varying function, and the parameter of regular variation. Hence, to obtain this result, a mixture of classic extreme value theory and analysis of heavy tails is needed. Furthermore, we show that this rescaled maximum waiting time process $(\max_{i\leq N}W_i(tc_N)/c_N,t\in[0,T])$ converges as a process in $D[0,T]$ to an extremal process $(\sup_{s\in[0,t]}(X_{(s,t)}-\mu(t-s)),t\in[0,T])$ with Fr\'echet marginals, with $D[0,T]$ the space of cadlag functions on $[0,T]$, which we equip with the $d^0$ metric, cf.\ \cite[Eq.\ (12.16)]{billingsley2013convergence}. Finally, we prove steady-state convergence of $\max_{i\leq N}W_i(\infty)/c_N$ to $\lim_{t\to\infty}\sup_{s\in[0,t]}(X_{(s,t)}-\mu(t-s))$. 

Applications of these heavy-tailed fork-join queues are usually found in parallel computing. Companies such as Google, Microsoft and Alibaba have datacenters with thousands of servers that are available for cloud computing, where there is often a form of parallel scheduling. Jobs in these systems have typically large sizes, and are often heavy tailed. However, most literature on parallel queueing theory assumes service times to be light tailed, cf. the survey \ \cite{harchol2021open}. This motivates the analysis of parallel queueing networks with heavy-tailed job sizes.

Our work relates to the literature on fork-join queues. Exact results on probability distributions of fork-join queues are only derived for fork-join queues with two service stations, cf.\ \cite{baccelli1985two,flatto1984two,de1988fredholm,wright1992two}. Approximations and bounds for performance measure of the fork-join queue with an arbitrary but fixed number of servers can be found in \cite{baccelli1989queueing,ko2004response,nelson1988approximate}. In \cite{varma1990heavy}, a heavy-traffic analysis for fork-join queues is derived; see also \cite{nguyen1993processing} and \cite{nguyen1994trouble}. More recent work in this direction may be found in \cite{lu2015gaussian,lu2017heavy,lu2017heavy2,MeijerSchol2021,scholMOR}. 

Moreover, our work is connected to literature on heavy-tailed phenomena; cf.\ \cite{nair2022fundamentals} for a summary. Specific results on the interplay between fork-join queues and heavy-tailed services can be found in \cite{raaijmakers2021fork, xia2007scalability, zeng2021fork}. In \cite[Thm.\ 2]{raaijmakers2021fork}, asymptotic lower and upper bounds for the tail probability of the maximum waiting time in steady state are given; however these bounds are not sharp when $N$ is large. In \cite{xia2007scalability} and \cite{zeng2021fork}, the authors investigate the fork-join queue with heavy-tailed services under a blocking mechanism. This paper contributes to the existing literature, as we give sharp convergence results for the maximum waiting time with heavy-tailed service times, where the number of servers $N$ grows large. We combine results from extreme value theory, the analysis of heavy-tailed random variables, and results on process convergence in $D[0,T]$.

Furthermore, the limiting process in this paper has an interesting form; this process is an extremal process with negative drift. Several papers have been written on extremal processes, cf.\ \cite{ballerini1985records, ballerini1987records, dwass1964extremal, dwass1966extremal, resnick1973structure}. These results are, among others, used and applied on the analysis of records in sport, cf.\ \cite{ballerini1985records, ballerini1987records}. For example, in \cite{ballerini1985records}, a model is used to analyze the times in the mile run.  

This paper is organized as follows. We present our model in Section \ref{sec: model} and our main results in Theorems \ref{thm: auxiliary result waiting time heavy tail}, \ref{thm: main result waiting time heavy tail}, \ref{thm: steady state heavy tail}, and Proposition \ref{prop: marginal steady state}. We give a heuristic analysis of our results in Section \ref{subsec: heuristic analysis}. In Section \ref{subsec: other choices}, we discuss other modeling choices. In Section \ref{sec: prel results heavy tails}, we present some auxiliary results. We prove process convergence in Section \ref{sec: process convergence}. Finally, we prove our main results in Section \ref{sec: steady state}.

\section{Model and main results}\label{sec: model}
In this paper, we analyze a fork-join queue with a common arrival process, and a service process that consists of a Weibull-like i.i.d.\ part and a regularly varying part that is the same among all the servers. This models the situation that if a job has a large size, then all the subtasks have a large size, with some variability. We show in Section \ref{subsec: heuristic analysis} that the Weibull distribution has convenient properties that we exploit in this paper; in Section \ref{subsec: other choices}, we briefly discuss what happens when the i.i.d.\ part of the service process has a lighter tail. We write the random variable $A_{i,j}B_j$ as the representation of a service time at server $i$ of the subtask of the $j$-th job, while the random variable $T_j$ is the interarrival time between the $j$-th and $(j+1)$-st job. Now, by Lindley's recursion, the waiting time at server $i$ upon arrival of the $(n+1)$-st job equals
\begin{align}\label{eq: indiv waiting time}
W_i(n)=\sup_{0\leq k\leq n}\sum_{j=k+1}^n (A_{i,j}B_j-T_j),
\end{align}
with $W_i(0)=0$ and $\sum_{j=n+1}^n(A_{i,j}B_j-T_j)=0$. Moreover, we write $W_i(t)=W_i(\lfloor t\rfloor)$. Furthermore, the maximum of the $N$ waiting times equals
\begin{align}
\max_{i\leq N}W_i(n)=\max_{i\leq N}\sup_{0\leq k\leq n}\sum_{j=k+1}^n (A_{i,j}B_j-T_j).
\end{align} 

We assume that the sequence of random variables $(B_j,j\geq 1)$ are independent random variables that satisfy 
\begin{align}\label{eq: def regularly varying rv}
    \probability{B_j>x}= L(x)/x^{\beta},
\end{align}
 with $L(x)$ a slowly varying function and $\beta>1$, indicating possible large job sizes. We let i.i.d.\ random variables $(A_{i,j},i\geq 1, j\geq 1)$ satisfy 
\begin{align}\label{eq: asymptotic relation weibull}
\log \probability{A_{i,j}>x}\sim -qx^{\alpha},
\end{align}
as $x\to\infty$, with $0<\alpha<1$ and $q>0$. Let $b_N=\left(\frac{\log N}{q}\right)^{1/\alpha}$. Then, we know from standard extreme value theory \cite[Thm.\ 5.4.1, p.\ 188]{de2007extreme} that
\begin{align}
\frac{\max_{i\leq N}A_i}{b_N}\LimitP 1,
\end{align}
as $N\to\infty$. Thus, the number $b_N$ indicates the approximate size of the largest of $N$ independent Weibull-distributed random variables. Furthermore, we have independent and identically distributed distributed random variables $(T_j,j\geq 1)$, such that 
\begin{align}
\mathbb{E}[A_{i,j}B_j-T_j]=-\mu,
\end{align}
with $\mu>0$.

 In this paper, we prove process convergence of the scaled maximum waiting time over $N$ servers in Theorem \ref{thm: main result waiting time heavy tail}; cf.\ \cite{scholMOR} for a similar result for fork-join queues with light-tailed services. In order to achieve this result, we need to scale the number of arriving jobs and the maximum waiting time with a sequence $(c_N,N\geq 1)$, where the sequence $(c_N,N\geq 1)$ satisfies
\begin{align}\label{eq: property cN}
 c_N\sim\frac{(c_N/b_N)^{\beta}}{L(c_N/b_N)},   
\end{align} 
as $N\to\infty$, with $\frac{c_N}{b_N}\LimitN \infty$. We explain in Section \ref{subsec: heuristic analysis} in more detail why this sequence scales as given in \eqref{eq: property cN}.  Following standard arguments on generalized inverses of regularly varying functions; cf.\ \cite[Prop.\ 2.6 (v,vi,vii)]{resnick2007heavy} and \cite[Thm.\ 1.5.12]{bingham1989regular}, we can solve the right-hand side of \eqref{eq: property cN} and get that $\frac{c_N}{b_N}\sim c_N^{\frac{1}{\beta}}/\hat{L}(c_N)$, with $\hat{L}$ a slowly varying function. From this, it follows that $b_N\sim\hat{L}(c_N)c_N^{\frac{(\beta-1)}{\beta}}$. Now, we define the sequence $(c_N,N\geq 1)$ as 
\begin{align}\label{eq: def cN}
    c_N:=\tilde{L}(b_N)b_N^{\frac{\beta}{(\beta-1)}}
\end{align}
where $\tilde{L}$ is a slowly varying function that equals
\begin{align}\label{eq: tilde L descr}
    \tilde{L}(x)x^{\frac{\beta}{(\beta-1)}}=\left(\left(\left(\frac{x}{\left(\frac{x^\beta}{L(x)}\right)^{\leftarrow}}\right)^*\right)^{\leftarrow}\right)^*,
\end{align}
with $H(y)^{\leftarrow}=\inf\{s: H(s)\geq y\}$, and $f(x)^*$ is a monotone function with the property that $f(x)^*\sim f(x)$ as $x\to\infty$. Thus, the sequence $(c_N,N\geq 1)$ satisfies the relation described in \eqref{eq: property cN}. More precise properties of the function $\tilde{L}$ are given in Lemma \ref{lem: properties tilde L}.

As we have a proper scaling of the number of arriving jobs and the maximum waiting time by a sequence $(c_N,N\geq 1)$, the scaled maximum waiting time has the form 
\begin{align}\label{eq: waiting time}
    \frac{\max_{i\leq N}W_i(tc_N)}{c_N}=\sup_{s\in[0,t]}\frac{\max_{i\leq N}\sum_{j=\lfloor sc_N\rfloor+1}^{\lfloor tc_N\rfloor}(A_{i,j}B_j-T_j)}{c_N}.
\end{align}
Notice that 
\begin{align}\label{eq: equality in dist max waiting time}
\sup_{s\in[0,t]}\frac{\max_{i\leq N}\sum_{j=\lfloor sc_N\rfloor+1}^{\lfloor tc_N\rfloor}(A_{i,j}B_j-T_j)}{c_N}\overset{d}{=} \sup_{s\in[0,t]}\frac{\max_{i\leq N}\sum_{j=1}^{\lfloor sc_N\rfloor}(A_{i,j}B_j-T_j)}{c_N}.
\end{align}
Thus, to prove convergence of a single random variable $\frac{\max_{i\leq N}W_i(tc_N)}{c_N}$ it suffices to prove convergence of the right-hand side in Equation \eqref{eq: equality in dist max waiting time}. However, the processes $(\sup_{s\in[0,t]}\frac{\max_{i\leq N}\sum_{j=1}^{\lfloor sc_N\rfloor}(A_{i,j}B_j-T_j)}{c_N},t\in[0,T])$ and $(\frac{\max_{i\leq N}W_i(tc_N)}{c_N},t\in[0,T])$ are not equal in distribution. For instance, $(\sup_{s\in[0,t]}\frac{\max_{i\leq N}\sum_{j=1}^{\lfloor sc_N\rfloor}(A_{i,j}B_j-T_j)}{c_N},t\in[0,T])$, which we will refer to as the \textit{auxiliary process}, is non-decreasing in $t$, and $(\frac{\max_{i\leq N}W_i(tc_N)}{c_N}\allowbreak,t\in[0,T])$ is not non-decreasing in $t$. In Theorem \ref{thm: auxiliary result waiting time heavy tail}, we show that this auxiliary process converges in distribution to a limiting process;
$$
\left(\sup_{s\in[0,t]}\frac{\max_{i\leq N}\sum_{j=1}^{\lfloor sc_N\rfloor} (A_{i,j}B_j-T_j)}{c_N},t\in[0,T]\right)\LimitD \left(\sup_{s\in[0,t]}(X_s-\mu s),t\in[0,T]\right),
$$
as $N\to\infty$. The process $(X_t,t\in[0,T])$ is a stochastic process with Fr\'echet-distributed marginals. This process has cumulative distribution function $\probability{X_t\leq x}=\exp(-t/x^{\beta})$ for $x>0$. Furthermore, $X_{t+s}=\max(X_t,\hat{X}_s)$, where $\hat{X}_s$ is an independent copy of $X_s$, because $\probability{X_{t+s}<x}=\probability{X_t<x}\probability{\hat{X}_s<x}=\exp(-t/x^{\beta})\exp(-s/x^{\beta})=\exp(-(t+s)/x^{\beta})$. Thus, the process $(X_t,t\in[0,T])$ is a function in $D[0,T]$ and is called an extremal process, cf.\ \cite{resnick1973structure}. It is easy to see that $(\sup_{s\in[0,t]}(X_s-\mu s),t\in[0,T])$ is also non-decreasing in $t$. The limiting process of $(\frac{\max_{i\leq N}W_i(tc_N)}{c_N},t\in[0,T])$ has the same marginals as the process $(\sup_{s\in[0,t]}(X_s-\mu s),t\in[0,T])$, but is not non-decreasing. We write the limiting process of the maximum waiting time as $(\sup_{s\in[0,t]}(X_{(s,t)}-\mu(t-s)),t\in[0,T])$, with $X_{(s,t)}\overset{d}{=}X_{t-s}$. For $r<s<t$, we have that $X_{(r,t)}=\max(X_{(r,s)},X_{(s,t)})$, and we have that $X_{(s,t)}$ and $X_{(u,v)}$ are independent if and only if the intervals $(s,t)$ and $(u,v)$ are disjoint. We write $X_t=X_{(0,t)}$. In conclusion, the random variable $X_t$ involves a single time parameter, while the random variable $X_{(s,t)}$ is defined by two time parameters, which complicates the proof. There is a clear connection between the stochastic processes however, and in this paper, we first prove convergence of the non-decreasing process $(\frac{\max_{i\leq N}\sup_{s\in[0,t]}\sum_{j=1}^{\lfloor sc_N\rfloor} (A_{i,j}B_j-T_j)}{c_N},t\in[0,T])$ and we use this result with some additional steps to prove process convergence of the scaled maximum waiting time $(\frac{\max_{i\leq N}W_i(tc_N)}{c_N},t\in[0,T])$.
\begin{assumption}[Waiting time]\label{ass: waiting time}
Let the waiting time of customers in front of the $i$-th server be given in Equation \eqref{eq: indiv waiting time}, the i.i.d.\ random variables $(A_{i,j},i\geq 1,j\geq 1)$ satisfy \eqref{eq: asymptotic relation weibull}, and the i.i.d.\ random variables $(B_j,j\geq 1)$ satisfy \eqref{eq: def regularly varying rv} with $L(x)$ a slowly varying function. 
\end{assumption}
\begin{assumption}[Scaling]\label{ass: scaling}
Let $b_N=(\log N/q)^{\frac{1}{\alpha}}$, and $(c_N,N\geq 1)$ and $\tilde{L}$ satisfy \eqref{eq: property cN}, \eqref{eq: def cN}, and \eqref{eq: tilde L descr}.
\end{assumption}
\begin{assumption}[Limiting process]\label{ass: limiting process}
 Let $(X_{(s,t)},t\in[0,T])$ be a stochastic process with Fr\'echet-distributed marginals. For $r<s<t$, we have that $X_{(r,t)}=\max(X_{(r,s)},X_{(s,t)})$, and we have that $X_{(s,t)}$ and $X_{(u,v)}$ are independent if and only if the intervals $(s,t)$ and $(u,v)$ are disjoint. We write $X_t=X_{(0,t)}$ and we have that $X_{(s,t)}\overset{d}{=}X_{t-s}$. Furthermore, $\probability{X_t\leq x}=\exp(-t/x^{\beta})$ for $x>0$.
\end{assumption}
\begin{theorem}\label{thm: auxiliary result waiting time heavy tail}
Given that Assumptions \ref{ass: waiting time}--\ref{ass: limiting process} hold, we have that 
\begin{align}\label{eq: time convergence auxiliary process}
\left(\sup_{s\in[0,t]}\frac{\max_{i\leq N}\sum_{j=1}^{\lfloor sc_N\rfloor} (A_{i,j}B_j-T_j)}{c_N},t\in[0,T]\right)\LimitD \left(\sup_{s\in[0,t]}(X_s-\mu s),t\in[0,T]\right),
\end{align}
as $N\to\infty$.
\end{theorem}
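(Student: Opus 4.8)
The plan is to exploit an algebraic simplification, reduce the maximum-over-servers sum to a single-big-jump term driven by the extremes of the Weibull factors, and then finish by a continuous-mapping argument. The first observation is that the interarrival times $T_j$ are common to all servers, so that
$$\max_{i\leq N}\sum_{j=1}^{n}(A_{i,j}B_j-T_j)=\Big(\max_{i\leq N}\sum_{j=1}^{n}A_{i,j}B_j\Big)-\sum_{j=1}^{n}T_j.$$
The second term is handled by a functional law of large numbers: $\sum_{j=1}^{\lfloor sc_N\rfloor}T_j/c_N\to \mathbb{E}[T]\,s$ uniformly on $[0,T]$, almost surely. It therefore remains to analyse the inner process $M_N(s):=\max_{i\leq N}\sum_{j=1}^{\lfloor sc_N\rfloor}A_{i,j}B_j/c_N$.

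The heart of the argument is a coupling lemma asserting that, uniformly in $s\in[0,T]$,
$$M_N(s)=\frac{\max_{j\leq \lfloor sc_N\rfloor}b_NB_j}{c_N}+\mathbb{E}[AB]\,s+o_{\mathbb{P}}(1).$$
The lower bound is obtained by exhibiting a good server: for the job $j^\ast$ attaining $\max_{j\leq\lfloor sc_N\rfloor}B_j$ there is, with high probability, a server whose Weibull factor on that job is close to $\max_{i}A_{i,j^\ast}\approx b_N$, while its remaining increments obey the law of large numbers and contribute $\mathbb{E}[AB]\lfloor sc_N\rfloor/c_N$. The upper bound is the main obstacle. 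The naive termwise estimate $\max_i\sum_jA_{i,j}B_j\le\sum_j(\max_iA_{i,j})B_j$ is hopelessly lossy, since it allows a different server for each job; one must use that a single server is fixed across the whole sum. Concretely, I would split each server's sum into a \emph{typical} part, concentrating at $\mathbb{E}[AB]\,s$ by a heavy-tailed law of large numbers applied uniformly over the $N$ servers, and a \emph{big-jump} part. The crucial quantitative input is that $N=\exp(qb_N^{\alpha})$ grows super-polynomially in $b_N$, whereas $c_N\sim\tilde L(b_N)b_N^{\beta/(\beta-1)}$ is only polynomial; hence the expected number of (server, pair-of-jobs) configurations in which one server simultaneously carries two amplified jumps is $O(c_N^{2}/N)\to0$, which rules out any server beating the single-big-jump term. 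Controlling the moderate regime (medium $B_j$ paired with medium $A_{i,j}$) uniformly over servers and over $s$ is the remaining delicate point; I would handle it by truncating $B_j$ at a level between $c_N/b_N$ and $c_N$ and applying a maximal inequality.

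With the coupling lemma in hand, combining it with the drift term and using $\mathbb{E}[AB]-\mathbb{E}[T]=-\mu$ gives $S_N(s):=M_N(s)-\sum_{j=1}^{\lfloor sc_N\rfloor}T_j/c_N\approx \max_{j\leq\lfloor sc_N\rfloor}b_NB_j/c_N-\mu s$ uniformly. The running maximum of the regularly varying sequence is then treated by classical extreme-value theory: the point process of points $(j/c_N,\,b_NB_j/c_N)$ converges to a Poisson random measure, and the scaling relation $c_N\sim(c_N/b_N)^{\beta}/L(c_N/b_N)$ of Assumption \ref{ass: scaling} is precisely what makes $c_N/b_N$ the correct regularly varying normalising sequence, so that $(\max_{j\leq\lfloor tc_N\rfloor}b_NB_j/c_N,\,t\in[0,T])\LimitD(X_t,\,t\in[0,T])$ in $D[0,T]$ with the Fr\'echet marginals $\mathbb{P}(X_t\le x)=\exp(-t/x^{\beta})$; this is where Lemma \ref{lem: properties tilde L} on $\tilde L$ enters, to pin down the constant to exactly $1$. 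A converging-together argument then upgrades this to $S_N\LimitD(X_s-\mu s,\,s\in[0,T])$ in the Skorokhod topology.

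Finally, the auxiliary process is the running supremum of $S_N$, namely $\sup_{s\in[0,t]}S_N(s)$. Since the running-supremum functional $x\mapsto(\sup_{s\in[0,t]}x(s),\,t\in[0,T])$ is continuous on $D[0,T]$ equipped with the $d^0$ metric, the continuous-mapping theorem yields the claimed convergence to $(\sup_{s\in[0,t]}(X_s-\mu s),\,t\in[0,T])$. As both the prelimit and the limit are non-decreasing in $t$, tightness is automatic, and one could alternatively conclude by verifying finite-dimensional convergence and invoking monotonicity; the continuous-mapping route is cleaner once the inner convergence is established. I expect essentially all of the difficulty to sit in the uniform upper bound of the coupling lemma, that is, in showing that the shared heavy-tailed factors $B_j$ together with the independent Weibull maxima over $N\to\infty$ servers conspire to produce precisely one amplified jump and no more.
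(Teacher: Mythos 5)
Your overall architecture coincides with the paper's proof: approximate the rescaled maximum-of-sums process uniformly on $[0,T]$ by $\max_{j\leq\lfloor tc_N\rfloor}B_j/(c_N/b_N)-\mu t$, prove $D[0,T]$-convergence of that extremal process to $(X_t-\mu t,\,t\in[0,T])$, combine via a converging-together (triangle inequality) argument, and finish with the continuous-mapping theorem applied to the running supremum; this is exactly the route of Lemmas \ref{lem: pointwise convergence}--\ref{lem: sup max B and process} in the paper, and your pulling $\sum_j T_j$ out of the maximum is a harmless simplification the paper does not bother with. The gap sits in the step you yourself call the heart of the matter: the uniform upper bound in your coupling lemma. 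Your quantitative input --- that the expected number of (server, pair-of-jobs) configurations carrying two amplified jumps is $O(c_N^2/N)\to 0$ --- has that order only if ``amplified'' means $A_{i,j}$ of size roughly $b_N$, since only then is the per-variable probability $O(1/N)$. That counting rules out one server holding two jumps of order $b_N$, but that was never the danger. The danger is a single server accumulating many intermediate jumps: for fixed $\delta\in(0,1)$ one has $\mathbb{P}(A_{i,j}>\delta b_N)\approx N^{-\delta^{\alpha}}\gg N^{-1}$, so the expected number of servers holding $k$ such jumps is of order $N^{1-k\delta^{\alpha}+o(1)}c_N^{k}$, and one must control \emph{all} scales $\delta$ simultaneously, down to scales that decrease with $N$, uniformly over the $N$ servers. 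This is precisely what the paper's Weibull machinery does and what a pairwise count cannot do: truncate $A_{i,j}$ at level $(1+\epsilon)^{1-\alpha}b_N^{1-\alpha}$, apply Bennett's inequality (Corollary \ref{cor: bennett}) to the truncated part with error $o(1/N)$ so the union bound over servers closes, and apply the Chernoff estimate of \cite[Lem.\ 8]{brosset2022large} (cf.\ Lemma \ref{lem: large deviations sum}) to show that $\max_{i\leq N}\sum_j A_{i,j}\mathbbm{1}(A_{i,j}\geq (1+\epsilon)^{1-\alpha}b_N^{1-\alpha})$ cannot exceed $(1+\epsilon)b_N$. The hypothesis $\alpha<1$ enters exactly here (it is what forces $\sum_j z_j\leq 1$ whenever $\sum_j z_j^{\alpha}\leq 1$), and your sketch never uses it.

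Two further points. First, your proposed remedy for the moderate regime --- truncating $B_j$ at a level between $c_N/b_N$ and $c_N$ and invoking a maximal inequality --- aims at the wrong factor: the $B_j$ are common to all servers, so they contribute no entropy to the union bound over $i$; the whole difficulty of beating the factor $N$ lies in the $A_{i,j}$, which is why the paper truncates $A$ and not $B$. Second, when $1<\beta\leq 2$ the $B_j$ have infinite variance, so any concentration step (Bennett or otherwise) needs the additional input of Lemma \ref{lem: second moment alpha small}; your plan does not address this case. The remaining components of your outline --- extreme-value/point-process convergence of the $B$-maximum with the scaling relation \eqref{eq: property cN} pinning the Fr\'echet constant to $1$, Slutsky, and continuity of the running-supremum functional on $D[0,T]$ --- are sound and match the paper's Lemmas \ref{lem: conv in D sum} and \ref{lem: conv in D max B} and its final continuous-mapping step.
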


Now, the main result proven in this paper is given in Theorem \ref{thm: main result waiting time heavy tail}.
\begin{theorem}\label{thm: main result waiting time heavy tail}
Given that Assumptions \ref{ass: waiting time}--\ref{ass: limiting process} hold, we have that 
\begin{align}\label{eq: conv in d main result}
\left(\frac{\max_{i\leq N}W_i(tc_N)}{c_N},t\in[0,T]\right)\LimitD \left(\sup_{s\in[0,t]}(X_{(s,t)}-\mu(t-s)),t\in[0,T]\right),
\end{align}
as $N\to\infty$.
\end{theorem}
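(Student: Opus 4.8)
The plan is to deduce the non-monotone limit of Theorem \ref{thm: main result waiting time heavy tail} from the monotone auxiliary limit of Theorem \ref{thm: auxiliary result waiting time heavy tail} by working with the two-parameter object
\[
R_N(s,t)=\frac{\max_{i\le N}\sum_{j=\lfloor sc_N\rfloor+1}^{\lfloor tc_N\rfloor}(A_{i,j}B_j-T_j)}{c_N},\qquad 0\le s\le t\le T,
\]
so that, by the identity used in \eqref{eq: waiting time}, $\frac{\max_{i\le N}W_i(tc_N)}{c_N}=\sup_{s\in[0,t]}R_N(s,t)$. As is standard for convergence in $(D[0,T],d^0)$, I would split the argument into convergence of the finite-dimensional distributions and tightness. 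The one-dimensional marginals are already settled: by \eqref{eq: equality in dist max waiting time} together with Theorem \ref{thm: auxiliary result waiting time heavy tail}, each $\sup_{s\in[0,t]}R_N(s,t)$ converges in distribution to $\sup_{s\in[0,t]}(X_s-\mu s)$, which under the time reversal $s\mapsto t-s$ and Assumption \ref{ass: limiting process} has the same law as $\sup_{s\in[0,t]}(X_{(s,t)}-\mu(t-s))=:M(t)$.

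For the finite-dimensional distributions the key structural remark is that for two disjoint time windows $(s,t]$ and $(s',t']$ the variables $R_N(s,t)$ and $R_N(s',t')$ are \emph{exactly} independent for every finite $N$, since they are built from disjoint, and hence independent, blocks of the triples $(A_{\cdot,j},B_j,T_j)$; this is the finite-$N$ counterpart of the independence-over-disjoint-intervals property of $X_{(s,t)}$ in Assumption \ref{ass: limiting process}. Combining this with the single-window limit $R_N(s,t)\LimitD X_{(s,t)}-\mu(t-s)$ --- in which the heavy tails force the maximum over servers of the window sum to be carried by a single term $\max_{i,\,\lfloor sc_N\rfloor<j\le\lfloor tc_N\rfloor}A_{i,j}B_j$, whose scaled value converges to the Fr\'echet variable $X_{(s,t)}$, while the remaining bulk contributes the drift $-\mu(t-s)$ through the law of large numbers --- I would obtain joint convergence of $(R_N(s_l,t_l))_{l\le r}$ to $(X_{(s_l,t_l)}-\mu(t_l-s_l))_{l\le r}$ for any finite family, using the decomposition $X_{(a,c)}=\max(X_{(a,b)},X_{(b,c)})$ over a common refinement of the windows. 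To pass to the joint law of $(\sup_{s\le t_k}R_N(s,t_k))_{k\le m}$, I would replace each supremum over $s$ by a maximum over a fine grid, apply the continuous mapping theorem to the grid-restricted functional, and then let the mesh tend to zero, controlling the discretization error by the Lipschitz-in-$s$ drift term together with the fact that the optimal $s$ sits just below one of the finitely many dominant jumps.

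The main obstacle is tightness, which for the auxiliary process is immediate from monotonicity but is genuinely at issue here because $(\frac{\max_{i\le N}W_i(tc_N)}{c_N},t\in[0,T])$ is not monotone. I would establish $J_1$-tightness through the Skorokhod modulus $w'(\cdot)$, exploiting that the process behaves like an upper envelope of downward-sloping lines: each sufficiently large scaled jump at location $j/c_N$ contributes a candidate $\frac{A_{i,j}B_j}{c_N}-\mu(t-j/c_N)$ that starts at its height and decreases at the deterministic rate $\mu$, so that between dominant jumps the process is essentially $\mu$-Lipschitz and decreasing, while upward jumps occur only at these locations. The two quantitative inputs are, first, a stochastic bound, uniform in $N$, on the number of $\varepsilon$-large scaled jumps in $[0,T]$ (a tight, asymptotically Poisson count arising from the regularly varying $B_j$ together with the extreme-value scaling $\max_{i\le N}A_{i,j}\approx b_N$), which lets me place the Skorokhod partition points at these finitely many jumps; and second, a uniform estimate showing that away from those jumps the bulk fluctuations and the small jumps perturb $R_N(s,t)$ from $X_{(s,t)}-\mu(t-s)$ by only $o(1)$, so that no spurious small-scale oscillations survive.

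The delicate part of this uniform estimate is the behaviour for short windows $t-s\downarrow 0$, where the law of large numbers governing the drift is weak and the Fr\'echet scaling degenerates; there I would bound $R_N(s,t)$ from above by the pure maximum $\frac{\max_{i\le N}\max_{\lfloor sc_N\rfloor<j\le\lfloor tc_N\rfloor}A_{i,j}B_j}{c_N}$ and argue that short windows cannot produce values exceeding the running envelope, so that they affect neither the finite-dimensional limits nor the modulus of continuity. Once finite-dimensional convergence and tightness are in place, Prohorov's theorem together with the identification of the limit through Assumption \ref{ass: limiting process} yields convergence in $(D[0,T],d^0)$ to $(\sup_{s\in[0,t]}(X_{(s,t)}-\mu(t-s)),t\in[0,T])$, completing the proof.
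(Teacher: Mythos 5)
Your overall strategy is viable, but it is organized genuinely differently from the paper's proof, so a comparison is worthwhile. The paper proceeds in two moves: first (Lemma \ref{lem: sup max B and process2}) it shows that the scaled waiting-time process is uniformly close over $[0,T]$ to the $B$-driven envelope $\sup_{s\in[0,t]}\bigl(\max_{\lfloor sc_N\rfloor\le j\le\lfloor tc_N\rfloor}B_j/(c_N/b_N)-\mu(t-s)\bigr)$, and second it proves convergence of that envelope by verifying the three conditions of Lemma \ref{lem: billingsley conv in D}. Crucially, its finite-dimensional step is an exact algebraic identity: for both the envelope and the limit, joint distribution functions of the running suprema factor into ratios and products of one-dimensional marginal distribution functions (Equations \eqref{eq: limit X s,t} and \eqref{subeq: limit B s,t}), so convergence of finite-dimensional distributions follows from the already-known one-dimensional convergence with no discretization whatsoever; and its oscillation control (condition 3 of Lemma \ref{lem: billingsley conv in D}) comes from stochastic domination of increments of the monotone record process $\max_{j\le\lfloor tc_N\rfloor}B_j$ by an independent copy, rather than from counting large jumps. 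Your plan replaces the first step by exact independence of $R_N$ over disjoint windows plus a common refinement and a grid approximation of the supremum, and replaces the second by a $w'$-modulus argument with an asymptotically Poisson count of $\varepsilon$-large jumps, followed by Prohorov's theorem. What your route buys is genericity (it would adapt to other models with a single-big-jump structure); what the paper's route buys is the complete avoidance of discretization-error and jump-counting estimates.

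The caveat is that both routes rest on the same technical crux, which your plan names but does not prove: the uniform (in $t$, and jointly over windows) replacement of $R_N(s,t)$ by $\max_{\lfloor sc_N\rfloor\le j\le\lfloor tc_N\rfloor}B_j/(c_N/b_N)-\mu(t-s)$. This is exactly your ``second quantitative input'', and in the paper it is where essentially all the work sits (Lemmas \ref{lem: sup max B and process} and \ref{lem: sup max B and process2}: truncation of the $A_{i,j}$ at level $b_N^{1-\alpha}$, Bennett and Chernoff bounds for the bulk, and a record-counting argument to make the estimate uniform in $t$). Two smaller inaccuracies to repair when executing your plan: the refinement step must acknowledge that $R_N$ itself does not satisfy the exact decomposition $R_N(a,c)=\max(R_N(a,b),R_N(b,c))$ — that identity holds only asymptotically, again via the crux estimate — and your short-window bound of $R_N(s,t)$ by the pure maximum $\max_{i,j}A_{i,j}B_j/c_N$ is not a literal upper bound (a sum can exceed its largest term), so it too requires the big-jump-plus-bulk decomposition with law-of-large-numbers control of the bulk. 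Neither point invalidates your plan, but they are precisely where the real difficulty lies.
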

Now, when letting $t\to\infty$ on the left-hand and the right-hand side of \eqref{eq: conv in d main result}, we expect from this convergence result that the maximum steady-state waiting time satisfies $\probability*{\max_{i\leq N}W_i(\infty)>xc_N}\LimitN \probability*{\sup_{t>0}(X_t-\mu t)>x}$. Though this does not trivially follow from Theorem \ref{thm: main result waiting time heavy tail}, it is indeed true, and we prove this in Theorem \ref{thm: steady state heavy tail}.
\begin{theorem}\label{thm: steady state heavy tail}
Given that Assumptions \ref{ass: waiting time}--\ref{ass: limiting process} hold, we have that
\begin{align}\label{eq: conv in d steady state}
   \probability*{\max_{i\leq N}W_i(\infty)>xc_N}\LimitN \probability*{\sup_{t>0}(X_t-\mu t)>x}.
\end{align}
\end{theorem}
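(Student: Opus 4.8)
The plan is to reduce the steady-state statement to the process convergence already established (Theorem \ref{thm: auxiliary result waiting time heavy tail}) together with an interchange-of-limits argument. First I would rewrite the left-hand side in terms of the auxiliary process. Since $\mathbb{E}[A_{i,j}B_j-T_j]=-\mu<0$ the queues are stable and the steady-state waiting times $W_i(\infty)=\lim_{n\to\infty}W_i(n)$ exist. Reversing the common time index $j$ simultaneously for all servers leaves the joint law of the stationary array $(A_{i,j}B_j-T_j)_{i,j}$ invariant, and a maximum over a product index set may be taken in either order, so that
\[
\frac{\max_{i\leq N}W_i(\infty)}{c_N}\overset{d}{=}\sup_{t\geq 0}Y_N(t),\qquad Y_N(t):=\frac{\max_{i\leq N}\sum_{j=1}^{\lfloor tc_N\rfloor}(A_{i,j}B_j-T_j)}{c_N}.
\]
Writing $Z_N(t):=\sup_{s\in[0,t]}Y_N(s)$ for the non-decreasing auxiliary process, monotonicity gives $\sup_{t\geq 0}Y_N(t)=\sup_{t\geq 0}Z_N(t)=\lim_{t\to\infty}Z_N(t)$. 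It therefore suffices to prove that $\sup_{t\geq 0}Z_N(t)\LimitD\sup_{t>0}(X_t-\mu t)$, noting that the limit law is continuous so that convergence of tail probabilities at every $x>0$ is equivalent.

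The lower bound is immediate. For each fixed $T>0$ we have $\sup_{t\geq 0}Z_N(t)\geq Z_N(T)$, and $Z_N(T)\LimitD\sup_{s\in[0,T]}(X_s-\mu s)$ by Theorem \ref{thm: auxiliary result waiting time heavy tail} and the continuous-mapping theorem, whence $\mathbb{P}(Z_N(T)>x)\to\mathbb{P}(\sup_{s\in[0,T]}(X_s-\mu s)>x)$ for every $x>0$. Letting $T\to\infty$ and applying monotone convergence to the limit yields
\[
\liminf_{N\to\infty}\mathbb{P}\Bigl(\tfrac{\max_{i\leq N}W_i(\infty)}{c_N}>x\Bigr)\geq\mathbb{P}\Bigl(\sup_{t>0}(X_t-\mu t)>x\Bigr).
\]

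The matching upper bound is where the real work lies. Truncating at level $T$ and using that an exceedance with $Z_N(T)\leq x$ must be realized at a time $t>T$, one gets
\[
\mathbb{P}\Bigl(\sup_{t\geq 0}Z_N(t)>x\Bigr)\leq\mathbb{P}\bigl(Z_N(T)>x\bigr)+\mathbb{P}\Bigl(\sup_{t>T}Y_N(t)>x\Bigr).
\]
The first term converges, as before, to $\mathbb{P}(\sup_{s\in[0,T]}(X_s-\mu s)>x)$, which tends to $\mathbb{P}(\sup_{t>0}(X_t-\mu t)>x)$ as $T\to\infty$. Everything then hinges on the uniform tail estimate $\lim_{T\to\infty}\limsup_{N\to\infty}\mathbb{P}(\sup_{t>T}Y_N(t)>x)=0$. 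This is the main obstacle: the compact-interval convergence of Theorem \ref{thm: auxiliary result waiting time heavy tail} gives no control of arbitrarily large times, so the estimate must be proved directly and made genuinely uniform in $N$ while $c_N,b_N\to\infty$ jointly.

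To obtain this bound I would exploit the negative drift. For $t>T$ the event $\{Y_N(t)>x\}$ forces $\max_{i\leq N}\sum_{j=1}^{\lfloor tc_N\rfloor}A_{i,j}B_j$ to exceed roughly $(x+\mu t)c_N$, and by the heavy-tail mechanism the dominant contribution comes from a single large common factor $B_{j^*}$ amplified by the extremal term $\max_i A_{i,j^*}\approx b_N$. Using the preliminary tail asymptotics of Section \ref{sec: prel results heavy tails} together with the defining scaling relation $c_N\sim(c_N/b_N)^{\beta}/L(c_N/b_N)$ of \eqref{eq: property cN}, the per-time exceedance probability is of order $t(x+\mu t)^{-\beta}$, the slowly varying corrections cancelling, so that uniformly in $N$ the quantity $\mathbb{P}(\sup_{t>T}Y_N(t)>x)$ is bounded by something comparable to $\int_T^\infty(x+\mu t)^{-\beta}\,dt$. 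Since $\beta>1$ this integral is finite and vanishes as $T\to\infty$; this mirrors the fact that in the limit $\mathbb{P}(\sup_{t>T}(X_t-\mu t)>x)\leq\int_T^\infty(x+\mu t)^{-\beta}\,dt\to 0$, computed from the Poisson representation of the extremal process. Concretely I expect to realize the bound through a union bound over the $N$ servers combined with the regularly varying tail of $A_{i,j}B_j$ and a maximal inequality for the negatively drifting walks, summed against the drift. Letting first $N\to\infty$ and then $T\to\infty$ then closes the gap between the two bounds and establishes \eqref{eq: conv in d steady state}.
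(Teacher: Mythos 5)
Your reduction, lower bound, and truncation all match the paper's own proof: the paper likewise writes $\max_{i\leq N}W_i(\infty)=\max_{i\leq N}\sup_{k\geq 0}\sum_{j=1}^{k}(A_{i,j}B_j-T_j)$, obtains the lower bound from Theorem \ref{thm: auxiliary result waiting time heavy tail} by letting the horizon grow, and splits the upper bound into a $[0,M]$ part plus a remainder. The genuine gap is the step you yourself flag as ``where the real work lies'': the uniform estimate $\lim_{T\to\infty}\limsup_{N\to\infty}\mathbb{P}\bigl(\sup_{t>T}Y_N(t)>x\bigr)=0$. Your proposed mechanism for it is internally inconsistent. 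You assert a per-time exceedance probability of order $t(x+\mu t)^{-\beta}$ and then claim the supremum over $t>T$ is controlled by something comparable to $\int_T^{\infty}(x+\mu t)^{-\beta}\,dt$. But a union bound over (unit blocks of) time, which is what your sketch describes, gives $\sum_{t\geq T}t(x+\mu t)^{-\beta}\asymp\int_T^{\infty}t(x+\mu t)^{-\beta}\,dt$, which diverges for every $1<\beta\leq 2$ --- a range the theorem covers, and precisely the range where second moments of $B$ fail. The mismatch is not cosmetic: exceedances at different times are all produced by the same single big jump, so a time-indexed union bound overcounts each jump by a factor proportional to the time it keeps the walk above the level; no concrete maximal inequality is named that removes this factor, and none of the standard ones does so uniformly in $N$ here.

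The paper closes exactly this gap with two devices absent from your sketch. First, it restarts the walks at time $Mc_N$ with independent copies $(\hat{A}_{i,j},\hat{B}_j,\hat{T}_j)$, so the post-$M$ supremum is bounded by the position at time $Mc_N$ plus an independent all-time supremum started from zero; since the position at $Mc_N$ is below $-(\mu/2) Mc_N$ with probability tending to one, the fresh supremum must reach the elevated level $(x+\mu M/2)c_N$. Second --- and this is the key missing idea --- it controls that all-time supremum by partitioning time into \emph{geometrically growing} blocks $[\lfloor\gamma^{n}c_N\rfloor,\lfloor\gamma^{n+1}c_N\rfloor]$, with $\gamma>1$ chosen so that $\gamma\mathbb{E}[A_{i,j}B_j]-\mathbb{E}[T_j]=-\mu_{\gamma}<0$. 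The supremum over block $n$ is bounded by $\max_{i\leq N}\sum_{j\leq \lfloor\gamma^{n+1}c_N\rfloor}\hat{A}_{i,j}\hat{B}_j-\sum_{j\leq\lfloor\gamma^{n}c_N\rfloor}\hat{T}_j$, whose tail converges, by the pointwise result of Lemma \ref{lem: pointwise convergence}, to $1-\exp\bigl(-\gamma^{n+1}/(x+\mu M/2+\gamma^{n}\mu_{\gamma})^{\beta}\bigr)\sim \gamma\,\mu_{\gamma}^{-\beta}\,\gamma^{n(1-\beta)}$; this is geometrically summable for \emph{every} $\beta>1$ (ratio test with limit $\gamma^{1-\beta}<1$), and each term vanishes as $M\to\infty$. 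Geometric rather than unit blocking is what makes the series converge on the whole range $\beta>1$; without it (or an equally rigorous substitute, e.g.\ a union bound over jump indices $j$ rather than over times, made uniform in $N$), your upper bound does not close and the proof is incomplete.
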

We can write the limiting probabilities explicitly.
\begin{proposition}\label{prop: marginal steady state}
Given that Assumptions \ref{ass: waiting time}--\ref{ass: limiting process} hold, we have that
\begin{align}\label{eq: steady state explicit prob}
\probability*{\sup_{t>0}(X_t-\mu t)>x}= 1-\exp\left(-\frac{1}{\mu(\beta-1)x^{\beta-1}}\right),
\end{align}
and
\begin{align}\label{eq: transient explicit prob}
    \probability*{\sup_{s\in[0,t]}(X_{(s,t)}-\mu (t-s))>x}=1-\exp\left(-\frac{1}{\mu^{\beta}(\beta-1)}\left(\frac{1}{(x/\mu)^{\beta-1}}-\frac{1}{(x/\mu+t)^{\beta-1}}\right)\right).
\end{align}
\end{proposition}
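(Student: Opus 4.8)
The plan is to realize the limiting extremal process through its underlying Poisson structure and then reduce each path supremum to a supremum over the atoms of that Poisson measure, after which both identities reduce to elementary intensity computations. Concretely, I would introduce a Poisson random measure $M$ on $(0,\infty)\times(0,\infty)$ with intensity $d\tau\,\nu(dy)$, where $\nu((y,\infty))=y^{-\beta}$, i.e.\ $\nu(dy)=\beta y^{-\beta-1}\,dy$. Setting $X_{(s,t)}=\sup\{y:(\tau,y)\in M,\ s<\tau\le t\}$ and $X_t=X_{(0,t)}$, the void probability of $M$ gives $\mathbb{P}(X_{(s,t)}\le x)=\exp(-(t-s)\,\nu((x,\infty)))=\exp(-(t-s)/x^{\beta})$, while the independence of $M$ on disjoint strips and the identity $X_{(r,t)}=\max(X_{(r,s)},X_{(s,t)})$ hold by construction. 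This reproduces the full finite-dimensional dependence structure of Assumption~\ref{ass: limiting process}, not merely the one-dimensional marginals, so $M$ is a legitimate representation of the limiting process.

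The key observation is that an atom $(\tau,y)$ of $M$ contributes its most favourable value to the drifted supremum at the \emph{left edge} of the interval on which it is active, where the subtracted drift is smallest. For \eqref{eq: steady state explicit prob} this yields $\sup_{t>0}(X_t-\mu t)=\sup\{y-\mu\tau:(\tau,y)\in M\}$: evaluating at $t=\tau$ realizes $y-\mu\tau$, and no later time improves it since $X_t$ equals some $y$ with $\tau\le t$. Hence $\{\sup_{t>0}(X_t-\mu t)>x\}$ is exactly the event that $M$ charges $\{y>x+\mu\tau\}$. The mean number of such atoms is $\int_0^\infty(x+\mu\tau)^{-\beta}\,d\tau=\big(\mu(\beta-1)x^{\beta-1}\big)^{-1}$, finite because $\beta>1$; since the number of charged atoms is Poisson with this mean, the probability that it is positive equals $1-\exp\!\big(-\big(\mu(\beta-1)x^{\beta-1}\big)^{-1}\big)$, which is \eqref{eq: steady state explicit prob}.

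The transient identity \eqref{eq: transient explicit prob} is the same argument localized to the strip $\{0<\tau\le t\}$: an atom $(\tau,y)$ with $\tau\in(0,t]$ achieves $y-\mu(t-\tau)$ as $s\uparrow\tau$, giving $\sup_{s\in[0,t]}(X_{(s,t)}-\mu(t-s))=\sup\{y-\mu(t-\tau):(\tau,y)\in M,\ 0<\tau\le t\}$. The event $\{\,\cdot>x\}$ then becomes $M$ charging $\{0<\tau\le t,\ y>x+\mu(t-\tau)\}$, whose intensity is $\int_0^t(x+\mu(t-\tau))^{-\beta}\,d\tau=\mu^{-1}(\beta-1)^{-1}\big(x^{1-\beta}-(x+\mu t)^{1-\beta}\big)$; substituting $x+\mu t=\mu(x/\mu+t)$ and $x=\mu(x/\mu)$ rewrites this exactly as the exponent in \eqref{eq: transient explicit prob}, and applying $1-\exp(-\cdot)$ finishes the computation.

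The main obstacle I anticipate is not the integration but justifying the reduction to a supremum over the atoms of $M$ rigorously, including the two matching inequalities (each atom gives a lower bound by approaching its left endpoint, and every admissible time $t$ or endpoint $s$ produces a value dominated by some atom) and the handling of the strict inequality $s<\tau$. The clean way around the latter is to note that both relevant regions have finite $M$-intensity (again using $\beta>1$), so $M$ has only finitely many atoms there almost surely; each supremum is therefore attained, and it exceeds $x$ if and only if some atom lies strictly above the corresponding boundary line. A secondary point worth stating explicitly is that the Poisson representation I impose must be verified to reproduce the disjoint-interval independence and the max-composition of Assumption~\ref{ass: limiting process}, since it is precisely this structure — and not the marginals alone — that licenses the atom-by-atom analysis.
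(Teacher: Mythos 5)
Your proof is correct, but it takes a genuinely different route from the paper. The paper never introduces a point-process representation: it discretizes time with mesh $\delta$, uses the max-decomposition and disjoint-interval independence of Assumption \ref{ass: limiting process} to factor the joint event $\cap_i\{X_{i\delta}-\mu i\delta<x\}$ into an infinite product of Fr\'echet terms, sandwiches $\mathbb{P}\left(\sup_{t>0}(X_t-\mu t)<x\right)$ between two such products (shifting the drift by one grid step for the lower bound, using monotonicity of the paths), recognizes the resulting exponent as a Hurwitz zeta function $\frac{\delta}{(\mu\delta)^{\beta}}\zeta\left(\beta,\frac{x}{\mu\delta}\right)$, and obtains $\frac{1}{\mu(\beta-1)x^{\beta-1}}$ as its $\delta\downarrow 0$ limit; the transient formula comes from the analogous finite sum. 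Your Poisson-random-measure argument reaches the same exponent directly as the intensity $\int_0^\infty(x+\mu\tau)^{-\beta}\,d\tau$ of the region above the line $y=x+\mu\tau$ --- in effect, the paper's zeta-function limit is exactly the Riemann-sum version of your integral. What your approach buys: a single void-probability computation handles both \eqref{eq: steady state explicit prob} and \eqref{eq: transient explicit prob} (only the region changes), it avoids the sandwich-and-limit bookkeeping, and it ties the result to the classical Resnick-style structure theory of extremal processes. What it costs, and what you correctly flag as proof obligations: you must check that the PRM construction reproduces the full dependence structure of Assumption \ref{ass: limiting process} (it does, and those properties do pin down the finite-dimensional laws, since any finite collection of $X_{(s_i,t_i)}$ decomposes into independent Fr\'echet maxima over the refinement partition), and you must justify the reduction of the path supremum to a supremum over atoms, including the open-endpoint limit $s\uparrow\tau$; both verifications are routine, and your two-sided inequality argument (each atom is nearly achieved at its time coordinate; every admissible time is dominated by some atom since the drift penalty only grows) is sound. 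The paper's route, by contrast, is more elementary in that it manipulates only the objects named in the assumption, with no auxiliary construction whose faithfulness must be certified.
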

\subsection{Main ideas for the proofs}\label{subsec: heuristic analysis}
To prove Theorem \ref{thm: main result waiting time heavy tail} directly is challenging, since the limiting random variable $X_{(s,t)}$ depends on two parameters and cannot be written as a difference of the form $Y_t-Y_s$, as is the case in standard queueing theory. However, the marginal distributions of $X_{(s,t)}$ and $X_{t-s}$ are the same. Thus, we first prove Theorem \ref{thm: auxiliary result waiting time heavy tail}, after which we prove Theorem \ref{thm: main result waiting time heavy tail} using some auxiliary results on bounds on tail probabilities, convergence rates of sums of Weibull-distributed random variables, and auxiliary results on process convergence in $D[0,T]$; cf.\ Section \ref{sec: prel results heavy tails}. To get a better understanding of the convergence result in Theorem \ref{thm: auxiliary result waiting time heavy tail}, it benefits to first examine the process
\begin{align}
\left(\frac{\max_{i\leq N}\sum_{j=1}^{\lfloor tc_N\rfloor} (A_{i,j}B_j-T_j)}{c_N},t\in[0,T]\right),
\end{align}
so we remove the supremum term from the expression on the left-hand side of \eqref{eq: time convergence auxiliary process} and we are left with a maximum of $N$ random walks. We can however apply the continuous mapping theorem on this stochastic process and obtain the result in Theorem \ref{thm: auxiliary result waiting time heavy tail}, because the supremum is a continuous functional. Obviously, 
the law of large numbers implies that
\begin{align}
   \frac{\sum_{j=1}^{\lfloor tc_N\rfloor} (A_{i,j}B_j-T_j)}{c_N} \LimitP -\mu t,
\end{align}
as $N\to\infty$. However, when we investigate the largest of $N$ of these random variables, we obtain that 
\begin{align}
   \frac{\max_{i\leq N}\sum_{j=1}^{\lfloor tc_N\rfloor} (A_{i,j}B_j-T_j)}{c_N} \LimitD X_t-\mu t,
\end{align}
as $N\to\infty$. The fact that we see this limiting behavior has two main reasons; first of all, a standard result is that for i.i.d.\ regularly varying $(B_j,j\geq 1)$, the tail behavior of a finite sum is the same as the tail behavior of the largest regularly varying random variable. Second, for Weibull-distributed random variables and a deterministic sequence $(b_j,j\geq 1)$, we have that $\max_{i\leq N}\sum_{j=1}^n A_{i,j}b_j/b_N\LimitP \max_{j\leq n}b_j$, as $N\to\infty$, cf.\ \cite[Lem.\ B.1]{scholMOR} for a proof. Therefore, $\max_{i\leq N}\sum_{j=1}^nA_{i,j}B_j\approx \max_{i\leq N}A_i\cdot\max_{j\leq n}B_j+\mathbb{E}[A_{i,j}B_j](n-1)$ for $N$ large. Thus, we can conclude that for $N$ large,
\begin{align}
    \frac{\max_{i\leq N}\sum_{j=1}^{\lfloor tc_N\rfloor} (A_{i,j}B_j-T_j)}{c_N}&\approx \frac{\max_{i\leq N}A_i}{b_N}\frac{\max_{j\leq \lfloor tc_N\rfloor}B_j}{\frac{c_N}{b_N}}+  \frac{\sum_{j=1}^{\lfloor tc_N\rfloor} (A_{i,j}B_j-T_j)}{c_N}\label{subeq: approx 1}\\
   &\approx \frac{\max_{i\leq N}A_i}{b_N}\frac{\max_{j\leq \lfloor tc_N\rfloor}B_j}{\frac{c_N}{b_N}}-\mu t\label{subeq: approx 3}\\
 &  \approx \frac{\max_{j\leq \lfloor tc_N\rfloor}B_j}{\frac{c_N}{b_N}}-\mu t.\label{subeq: approx 4}
\end{align}
We see that the largest regularly varying random variable $\max_{j\leq \lfloor tc_N\rfloor}B_j$ determines the stochastic part in the limit, and is of order $\frac{c_N}{b_N}$. Now, it is easy to see that
\begin{align*}
    \probability*{\max_{j\leq \lfloor tc_N\rfloor}B_j\leq (x+\mu t)\frac{c_N}{b_N}}=\probability*{B_j\leq (x+\mu t)\frac{c_N}{b_N}}^{\lfloor tc_N\rfloor}\sim \left(1-\frac{L((x+\mu t)\frac{c_N}{b_N})}{\left((x+\mu t)\frac{c_N}{b_N}\right)^{\beta}}\right)^{\lfloor tc_N\rfloor}.
\end{align*}
Because we have defined $c_N$ as having the relation $c_N\sim\frac{(c_N/b_N)^{\beta}}{L(c_N/b_N)}$ as $N\to\infty$, we get that, 
$$
\left(1-\frac{L((x+\mu t)\frac{c_N}{b_N})}{\left((x+\mu t)\frac{c_N}{b_N}\right)^{\beta}}\right)^{\lfloor tc_N\rfloor}\sim \left(1-\frac{1}{(x+\mu t)^{\beta}c_N}\right)^{\lfloor tc_N\rfloor}\LimitN \exp\left(-\frac{t}{(x+\mu t)^{\beta}}\right).
$$
In conclusion, the limiting distribution of $\max_{i\leq N}\sum_{j=1}^{\lfloor tc_N\rfloor} (A_{i,j}B_j-T_j)/c_N$ is a Fr\'echet-distributed random variable with a negative drift term. We also see that we can approximate $\max_{i\leq N}\sum_{j=1}^{\lfloor tc_N\rfloor} (A_{i,j}B_j-T_j)/c_N$ with $\max_{j\leq \lfloor tc_N\rfloor}B_j/(\frac{c_N}{b_N})-\mu t$ as $N$ is large. This approximating random variable has convenient properties, since the stochastic term is non-decreasing in $t$. Therefore, to prove process convergence of\\ $\left(\max_{i\leq N}\sum_{j=1}^{\lfloor tc_N\rfloor} (A_{i,j}B_j-T_j)/c_N,t\in[0,T]\right)$ to $(X_t-\mu t,t\in[0,T])$, we first prove that $(\max_{j\leq \lfloor tc_N\rfloor}B_j/(\frac{c_N}{b_N})-\mu t,t\in[0,T])$ converges to $(X_t-\mu t,t\in[0,T])$. Furthermore, we prove in Lemma \ref{lem: sup max B and process} that for all $\epsilon>0$,
$$
\probability*{\sup_{t\in[0,T]}\left|\frac{\max_{i\leq N}\sum_{j=1}^{\lfloor tc_N\rfloor}(A_{i,j}B_j-T_j)}{c_N}-\left(\frac{\max_{j\leq \lfloor tc_N\rfloor}B_j}{\frac{c_N}{b_N}}-\mu t\right)\right|>\epsilon}\LimitN 0.
$$
After applying the triangle inequality, we obtain that 
$$
\left(\frac{\max_{i\leq N}\sum_{j=1}^{\lfloor tc_N\rfloor} (A_{i,j}B_j-T_j)}{c_N},t\in[0,T]\right)\LimitD \left(X_t-\mu t,t\in[0,T]\right),
$$
as $N\to\infty$. Now, by applying the continuous mapping theorem, we obtain the result of Theorem \ref{thm: auxiliary result waiting time heavy tail}. This is still an auxiliary result, because the process on the left-hand side of \eqref{eq: time convergence auxiliary process} is not the maximum waiting time process. We can however prove the process convergence of the maximum waiting time in Theorem \ref{thm: main result waiting time heavy tail} by using some additional results, as the marginals of the processes on the left side of the limit in Theorems \ref{thm: auxiliary result waiting time heavy tail} and \ref{thm: main result waiting time heavy tail} are the same, and the marginals of the limiting processes in Theorems \ref{thm: auxiliary result waiting time heavy tail} and \ref{thm: main result waiting time heavy tail} are the same. Thus, we already know that pointwise convergence holds.

In order to prove the convergence of the finite-dimensional distributions for the maximum waiting time process, we show that we can decompose the joint probabilities of both the maximum waiting time process and the limiting process into an operation of marginal probabilities, and thus, convergence of finite-dimensional distributions follows from pointwise convergence. For example, for $x_2+\mu t_2>x_1+\mu t_1$,
\begin{multline*}
\probability*{\sup_{s\in[0,t_1]}(X_{(s,t_1)}-\mu(t_1- s))<x_1\cap \sup_{s\in[0,t_2]}(X_{(s,t_2)}-\mu(t_2- s))<x_2}\\
=\frac{\probability*{\sup_{s\in[0,t_1]}(X_{(s,t_1)}-\mu(t_1- s))<x_1}}{\probability*{\sup_{s\in[0,t_1]}(X_{(s,t_1)}-\mu(t_1- s))<x_2+\mu(t_2-t_1)}}\probability*{\sup_{s\in[0,t_2]}(X_{(s,t_2)}-\mu(t_2-s))<x_2}.
\end{multline*}
An analogous equation holds for the process $(\sup_{s\in[0,t]}(\max_{\lfloor sc_N\rfloor\leq j\leq \lfloor tc_N\rfloor }\frac{B_j}{(\frac{c_N}{b_N})}-\mu(t- s)),t\in[0,T])$.

In Lemma \ref{lem: sup max B and process2}, we prove that the maximum waiting time in \eqref{eq: waiting time} satisfies
\begin{align*}
\probability*{\sup_{t\in[0,T]}\bigg|\sup_{s\in[0,t]}\frac{\max_{i\leq N}\sum_{j=\lfloor sc_N\rfloor+1}^{\lfloor tc_N\rfloor}(A_{i,j}B_j-T_j)}{c_N}-\sup_{s\in[0,t]}\left(\frac{\max_{\lfloor sc_N\rfloor\leq j\leq \lfloor tc_N\rfloor}B_j}{\frac{c_N}{b_N}}-\mu (t-s)\right)\bigg|>\epsilon}\LimitN 0,
\end{align*}
by using similar techniques as in Lemma \ref{lem: sup max B and process}. Finally, we show in the proof of Theorem \ref{thm: main result waiting time heavy tail} that 
$$
\left(\sup_{s\in[0,t]}\left(\max_{\lfloor sc_N\rfloor\leq j\leq \lfloor tc_N\rfloor }\frac{B_j}{(\frac{c_N}{b_N})}-\mu(t- s)\right),t\in[0,T]\right)\LimitD \left(\sup_{s\in[0,t]}(X_{(s,t)}-\mu(t-s)),t\in[0,T]\right),$$ 
as $N\to\infty$, by using the earlier results together with Lemma \ref{lem: billingsley conv in D} from \cite[Thm.\ 13.3]{billingsley2013convergence}.

In summary, we prove process convergence of the maximum waiting time through three steps; first, pointwise convergence follows from Theorem \ref{thm: auxiliary result waiting time heavy tail}; second, we show in Lemma \ref{lem: sup max B and process2} that the maximum waiting process is asymptotically equivalent to an extremal process that only depends on the regularly varying random variables, and finally, we prove process convergence for this latter process in Theorem \ref{thm: main result waiting time heavy tail}. 

In Section \ref{sec: steady state}, we show that the cumulative distribution function of the limiting maximum steady-state waiting time converges to $\probability{\sup_{t>0}(X_t-\mu t)<x}$. This means that the limiting cumulative distribution function of the maximum steady-state waiting time is the same as $\lim_{T\to\infty}\probability{\sup_{s\in[0,T]}(X_{(s,T)}-\mu(T-s))<x}$, thus the steady-state behavior of the limiting process of $(\sup_{s\in[0,t]}(X_{(s,t)}-\mu(t-s)),t\in[0,T])$ is the same as the extreme-value limit of the maximum steady-state waiting time, which is not a trivial result.

\subsection{Other choices for $A_{i,j}$}\label{subsec: other choices}
Our main result in Theorem \ref{thm: main result waiting time heavy tail} heavily relies on the fact that $A_{i,j}$ is Weibull-like, and we are able to derive general results. Furthermore, when $A_{i,j}$ has finite support, we are also able to derive general results. Under the assumption that $A_{i,j}$ has a finite right endpoint $b$, it follows that $b_N=b$. Furthermore, in \cite[Lem.\ B.1]{scholMOR}, we have shown that $\max_{i\leq N}\sum_{j=1}^nx_jA_{i,j}/b\LimitP \sum_{j=1}^nx_j$, as $N\to\infty$. Then,
\begin{align}
\probability*{\max_{i\leq N}\sum_{j=1}^k(A_{i,j}B_j-T_j)>x}\LimitN \probability*{\sum_{j=1}^k(bB_j-T_j)>x}.
\end{align}
Furthermore, if $\mathbb{E}[bB_j-T_j]<0$, we get that 
\begin{align}
\probability*{\max_{i\leq N}\sup_{k\geq 0}\sum_{j=1}^k(A_{i,j}B_j-T_j)>x}\LimitN\probability*{\sup_{k\geq 0}\sum_{j=1}^k(bB_j-T_j)>x}.
\end{align}

In general, when $A_{i,j}$ has unbounded support, from \cite[Lem.\ B.1]{scholMOR} follows that
\begin{align}
\probability*{\max_{i\leq N}\sup_{0\leq k\leq l}\sum_{j=1}^k(A_{i,j}B_j-T_j)>xb_N}\LimitN \probability*{\max_z\left\{\sum_{j=1}^lz_{j}B_j\Bigg|\sum_{j=1}^lz_j^{\alpha}\leq 1,0\leq z_j\leq 1\right\}>x}.
\end{align} 
For the heavy-tailed case, i.e., when $0<\alpha\leq 1$, we have that $\max_z\left\{\sum_{j=1}^lz_{j}B_j\Big|\sum_{j=1}^lz_j^{\alpha}\leq 1,0\leq z_j\leq 1\right\}=\max_{j\leq l}B_j$. In contrary, for the light-tailed case, i.e., when $\alpha>1$, we get nontrivial results depending on $\alpha$. For instance for $\alpha=2$, we obtain $\max_z\left\{\sum_{j=1}^lz_{j}B_j\Big|\sum_{j=1}^lz_j^{\alpha}\leq 1,0\leq z_j\leq 1\right\}=\sqrt{B_1^2+\cdots+B_l^2}$. In conclusion, when $\alpha> 1$, we cannot rely on the property described in \eqref{subeq: approx 1}--\eqref{subeq: approx 4} that follows from the behavior of Weibull-distributed random variables. In this paper, we limit ourselves to the case $0<\alpha<1$; the case $\alpha>1$ falls outside the scope of this paper. The case $\alpha=1$ lies on the boundary between these two regimes; this case needs a separate analysis. 

\section{Preliminary results}\label{sec: prel results heavy tails}
In Section \ref{subsec: heuristic analysis}, we gave heuristic ideas of our results. In order to be able to prove these, we need some auxiliary lemmas.

In \eqref{eq: property cN}, \eqref{eq: def cN}, and \eqref{eq: tilde L descr}, we heuristically describe the behavior of the sequence $(c_N,N\geq 1)$ and the slowly varying function $\tilde{L}$ given a sequence $(b_N,N\geq 1)$ and a slowly varying function $L$. An unanswered question is whether this sequence $(c_N,N\geq 1)$ and this function $\tilde{L}$ exists. In Lemma \ref{lem: properties tilde L}, we show how, if this $\tilde{L}$ exists, $L$ and $\tilde{L}$ are asymptotically related. Their asymptotic relation resembles the asymptotic relation between a slowly varying function $l$ and its de Bruijn conjugate $l^{\#}$, cf.\ \cite[Thm.\ 1.5.13]{bingham1989regular}. The proof of the existence of $\tilde{L}$ is analogous to the proof of existence of $l^{\#}$ given in \cite[Thm.\ 1.5.13]{bingham1989regular}, thus we omit it here.
\begin{lemma}[Asymptotic behavior of $\tilde{L}(x)$]\label{lem: properties tilde L}
From \eqref{eq: property cN}, \eqref{eq: def cN}, and \eqref{eq: tilde L descr} follows that the function $\tilde{L}$ satisfies the relation
\begin{align}\label{eq: asymptotic relation tilde L}
    \tilde{L}(x)\sim L(\tilde{L}(x)x^{\frac{1}{(\beta-1)}})^{\frac{1}{(\beta-1)}},
\end{align}
as $x\to\infty$.
\end{lemma}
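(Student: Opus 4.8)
The plan is to derive the asymptotic relation \eqref{eq: asymptotic relation tilde L} directly from the defining relations \eqref{eq: property cN}, \eqref{eq: def cN}, and \eqref{eq: tilde L descr}, bypassing the nested-inverse expression \eqref{eq: tilde L descr} entirely and working instead from the cleaner fixed-point relation \eqref{eq: property cN}. The key observation is that the scaling $c_N$ was \emph{designed} so that $c_N \sim (c_N/b_N)^{\beta}/L(c_N/b_N)$ as $N\to\infty$, and this single relation, combined with the definition $c_N = \tilde L(b_N)\,b_N^{\beta/(\beta-1)}$ in \eqref{eq: def cN}, should force \eqref{eq: asymptotic relation tilde L}. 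So rather than unwinding the iterated generalized inverses and monotone-equivalent versions in \eqref{eq: tilde L descr}, I would treat \eqref{eq: property cN} as the working hypothesis and manipulate it algebraically.

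\medskip

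\noindent First I would substitute \eqref{eq: def cN} into \eqref{eq: property cN}. Writing $c_N = \tilde L(b_N)\,b_N^{\beta/(\beta-1)}$, I compute the ratio
\begin{align*}
\frac{c_N}{b_N} = \tilde L(b_N)\,b_N^{\frac{\beta}{\beta-1}-1} = \tilde L(b_N)\,b_N^{\frac{1}{\beta-1}}.
\end{align*}
Substituting this into the right-hand side of \eqref{eq: property cN} and equating with the left-hand side $c_N = \tilde L(b_N)\,b_N^{\beta/(\beta-1)}$ gives, after raising $c_N/b_N$ to the power $\beta$,
\begin{align*}
\tilde L(b_N)\,b_N^{\frac{\beta}{\beta-1}} \sim \frac{\bigl(\tilde L(b_N)\,b_N^{\frac{1}{\beta-1}}\bigr)^{\beta}}{L\bigl(\tilde L(b_N)\,b_N^{\frac{1}{\beta-1}}\bigr)} = \frac{\tilde L(b_N)^{\beta}\,b_N^{\frac{\beta}{\beta-1}}}{L\bigl(\tilde L(b_N)\,b_N^{\frac{1}{\beta-1}}\bigr)}.
\end{align*}
The factor $b_N^{\beta/(\beta-1)}$ cancels on both sides, leaving $\tilde L(b_N) \sim \tilde L(b_N)^{\beta}/L(\tilde L(b_N)\,b_N^{1/(\beta-1)})$, which rearranges to $L(\tilde L(b_N)\,b_N^{1/(\beta-1)}) \sim \tilde L(b_N)^{\beta-1}$. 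Taking the $(\beta-1)$-th root yields exactly \eqref{eq: asymptotic relation tilde L} along the sequence $x = b_N$. To promote this from the discrete sequence $(b_N)$ to all real $x\to\infty$, I would invoke that $b_N = (\log N/q)^{1/\alpha}\to\infty$ densely enough and that both sides are slowly varying (hence locally controlled), so the relation extends to a genuine asymptotic in the continuous variable $x$.

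\medskip

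\noindent \textbf{The main obstacle} I anticipate is justifying the continuous-variable statement and, more subtly, the legitimacy of applying the slowly varying function $L$ to the \emph{argument} $\tilde L(b_N)\,b_N^{1/(\beta-1)}$ and treating the $\sim$ through it. The asymptotic equivalence $L(u_N)\sim L(v_N)$ when $u_N\sim v_N$ requires a uniform-convergence property of slowly varying functions (the uniform convergence theorem, \cite[Thm.\ 1.2.1]{bingham1989regular}), and one must check that the argument tends to infinity and that the approximations introduced by the $\sim$ in \eqref{eq: property cN} do not interact badly after being fed into $L$. This is precisely the delicate point that makes \eqref{eq: asymptotic relation tilde L} an implicit, self-referential relation for $\tilde L$ — $\tilde L$ appears inside its own defining equation through the argument of $L$ — and it is exactly analogous to the defining relation for the de Bruijn conjugate $l^{\#}$ satisfying $l(x)\,l^{\#}(xl(x))\to 1$, cf.\ \cite[Thm.\ 1.5.13]{bingham1989regular}. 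Since the existence of such an $\tilde L$ is asserted (with the proof deferred to the analogous de Bruijn argument), the present lemma only needs the \emph{derivation} of the relation assuming existence, so the algebraic manipulation above, together with a careful application of the uniform convergence theorem to pass $\sim$ through $L$, completes the proof.
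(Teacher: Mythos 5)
Your proposal is correct and is essentially the paper's own proof: the paper likewise sets $x=b_N$, substitutes \eqref{eq: def cN} into \eqref{eq: property cN} so that $c_N/b_N=\tilde{L}(x)x^{\frac{1}{(\beta-1)}}$, cancels the factor $x^{\frac{\beta}{(\beta-1)}}$, and rearranges to obtain \eqref{eq: asymptotic relation tilde L}. The extra machinery you flag as the main obstacle is not actually needed for this step --- the rearrangement from $\tilde{L}(x)\sim \tilde{L}(x)^{\beta}/L(\tilde{L}(x)x^{\frac{1}{(\beta-1)}})$ to the stated relation is pure algebra on a ratio tending to $1$ (no uniform convergence theorem required), and the paper simply treats the relation as holding in the continuous variable $x$ without further comment.
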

\begin{proof}
We write $x=b_N$, then the relation in \eqref{eq: property cN} can be rewritten to
$$
\tilde{L}(x)x^{\frac{\beta}{(\beta-1)}}\sim \frac{(\tilde{L}(x)x^{\frac{\beta}{(\beta-1)}}/x)^{\beta}}{L(\tilde{L}(x)x^{\frac{\beta}{(\beta-1)}}/x)},
$$
as $x\to\infty$. This simplifies to
$$
\tilde{L}(x)\sim \frac{\tilde{L}(x)^{\beta}}{L(\tilde{L}(x)x^{\frac{1}{(\beta-1)}})},
$$
as $x\to\infty$. The lemma follows.
\end{proof}
 \begin{remark}[Asymptotic solutions of $\tilde{L}(x)$]
 It is not trivial to find functions $\tilde{L}(x)$ that have the asymptotic relation described in \eqref{eq: asymptotic relation tilde L}, since $\tilde{L}$ appears both on the left and the right side of the equation. However, we know that $\tilde{L}$ is slowly varying, thus the term $x^{\frac{1}{(\beta-1)}}$ is dominant in $L(\tilde{L}(x)x^{\frac{1}{(\beta-1)}})^{\frac{1}{(\beta-1)}}$, so we can remove $\tilde{L}$ from the right-hand side in \eqref{eq: asymptotic relation tilde L} and look at the function $\tilde{L}^{(1)}$ that equals
 $$
 \tilde{L}^{(1)}(x)=L(x^{\frac{1}{(\beta-1)}})^{\frac{1}{(\beta-1)}}.
 $$
 For example, when $L(x)=\log x$, $\tilde{L}^{(1)}$ satisfies \eqref{eq: asymptotic relation tilde L}. However, there are also examples where $\tilde{L}^{(1)}$ does not satisfy the relation in \eqref{eq: asymptotic relation tilde L}, for example, when $L(x)=\exp(\sqrt{\log x})$. Still, we are able to find candidates that satisfy the relation in \eqref{eq: asymptotic relation tilde L}. First, we see that the relation in \eqref{eq: asymptotic relation tilde L} is actually an iterative relation. Thus, we can rewrite \eqref{eq: asymptotic relation tilde L} to 
 $$
 \tilde{L}(x)\sim L(L(\tilde{L}(x)x^{\frac{1}{(\beta-1)}})^{\frac{1}{(\beta-1)}}x^{\frac{1}{(\beta-1)}})^{\frac{1}{(\beta-1)}},
 $$
as $x\to\infty$. Now, with the same reasoning as before, we define 
$$
\tilde{L}^{(2)}(x)=L(L(x^{\frac{1}{(\beta-1)}})^{\frac{1}{(\beta-1)}}x^{\frac{1}{(\beta-1)}})^{\frac{1}{(\beta-1)}}.
$$
The function $\tilde{L}^{(2)}$ satisfies the relation in \eqref{eq: asymptotic relation tilde L} when $L(x)=\exp(\sqrt{\log x})$ and is a slowly varying function itself.
 \end{remark}
In order to prove that the heuristic approximations in Equations \eqref{subeq: approx 1}--\eqref{subeq: approx 4} are correct, we need to prove two things; first, that the largest regularly varying random variable determines the stochastic part of the limit, and second, that the other random variables satisfy the law of large numbers. To prove this second property, we use Bennett's inequality as stated below. In Corollary \ref{cor: bennett}, we state a simplified version of this inequality which we use in our proofs.
\begin{lemma}[Bennett's inequality \cite{bennett1962probability}]\label{lem: bennett}
Let $Y_1,\ldots,Y_n$ be independent random variables, $\mathbb{E}[Y_i]=0$, $\mathbb{E}[Y_i^2]=\sigma_i^2$, and $|Y_i|<M\in\mathbb{R}$ almost surely. Then for $y>0$,
$$
\probability*{\sum_{i=1}^nY_i>y}\leq \exp\left(-\frac{\sum_{i=1}^n\sigma_i^2}{M^2}h\left(\frac{yM}{\sum_{i=1}^n\sigma_i^2}\right)\right),
$$
with $h(x)=(1+x)\log(1+x)-x$.
\end{lemma}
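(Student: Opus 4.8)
The plan is to prove Bennett's inequality by the classical Chernoff--Cram\'er exponential-moment method, optimised over the free parameter. First I would fix $\lambda>0$ and apply Markov's inequality to $\exp(\lambda\sum_{i=1}^n Y_i)$, using independence to factorise the moment generating function:
\[
\mathbb{P}\left(\sum_{i=1}^n Y_i>y\right)\leq e^{-\lambda y}\prod_{i=1}^n\mathbb{E}\left[e^{\lambda Y_i}\right].
\]
This reduces the problem to bounding each individual factor $\mathbb{E}[e^{\lambda Y_i}]$ in a way that exploits all three hypotheses: mean zero, variance $\sigma_i^2$, and the almost-sure bound $|Y_i|<M$.

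The key step is the per-coordinate estimate. Expanding the exponential in its power series and taking expectations, the mean-zero assumption kills the linear term, so $\mathbb{E}[e^{\lambda Y_i}]=1+\sum_{k\geq 2}\lambda^k\mathbb{E}[Y_i^k]/k!$. For each $k\geq 2$ I would use
\[
\mathbb{E}[Y_i^k]\leq\mathbb{E}[|Y_i|^k]\leq M^{k-2}\mathbb{E}[Y_i^2]=\sigma_i^2 M^{k-2},
\]
which is where boundedness and the variance enter simultaneously. Summing the resulting series gives
\[
\mathbb{E}\left[e^{\lambda Y_i}\right]\leq 1+\frac{\sigma_i^2}{M^2}\left(e^{\lambda M}-1-\lambda M\right)\leq\exp\left(\frac{\sigma_i^2}{M^2}\left(e^{\lambda M}-1-\lambda M\right)\right),
\]
where the last inequality is $1+x\leq e^x$. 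Writing $v:=\sum_{i=1}^n\sigma_i^2$ and multiplying these bounds yields
\[
\mathbb{P}\left(\sum_{i=1}^n Y_i>y\right)\leq\exp\left(-\lambda y+\frac{v}{M^2}\left(e^{\lambda M}-1-\lambda M\right)\right).
\]

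It remains to optimise over $\lambda>0$. Differentiating the exponent and setting the derivative to zero gives the minimiser $\lambda^{*}=\frac{1}{M}\log(1+yM/v)$, for which $e^{\lambda^{*}M}=1+yM/v$. Substituting $t:=yM/v$ and $\lambda^{*}M=\log(1+t)$ back into the exponent, a short algebraic simplification collapses it to $-\frac{v}{M^2}\big[(1+t)\log(1+t)-t\big]=-\frac{v}{M^2}h(t)$, which is precisely the claimed bound with $t=yM/\sum_{i=1}^n\sigma_i^2$.

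The main obstacle I anticipate is the per-coordinate moment-generating-function estimate rather than the optimisation: one must handle the odd moments carefully, since they may be negative, so bounding $\mathbb{E}[Y_i^k]$ by $M^{k-2}\sigma_i^2$ relies on passing to $\mathbb{E}[|Y_i|^k]$ and gives only an upper bound, which suffices because the series coefficients $\lambda^k/k!$ are positive. One must also justify the term-by-term summation, e.g.\ by dominated convergence, using that $e^{\lambda Y_i}\leq e^{\lambda M}$ almost surely. The final algebraic identity producing $h$ is routine but must be carried out exactly to match the stated constant.
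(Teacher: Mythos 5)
Your proof is correct, but it is worth noting that the paper itself offers no proof of this lemma at all: it simply states the inequality and defers to the reference \cite{zheng2018improved}. What you have supplied is the classical Chernoff--Cram\'er argument, which is the standard textbook derivation of Bennett's inequality, and every step checks out: the factorisation of the moment generating function by independence, the per-coordinate bound via $\mathbb{E}[|Y_i|^k]\leq M^{k-2}\sigma_i^2$ (correctly routed through absolute moments so that possibly negative odd moments cause no trouble), the use of $1+x\leq e^x$, and the optimisation $\lambda^{*}=\frac{1}{M}\log\left(1+\frac{yM}{v}\right)$, which indeed collapses the exponent to $-\frac{v}{M^2}h\left(\frac{yM}{v}\right)$. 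Your flagged technical points are also handled appropriately: dominated convergence with dominating function $e^{\lambda M}$ justifies the term-by-term expectation, and the minimiser is strictly positive because $y>0$, so it is admissible in the Chernoff bound. In short, where the paper buys brevity by citation, your self-contained argument makes the paper's toolbox independent of the external reference at the cost of about a page; both are legitimate, and your version is complete as written.
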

For a proof, cf.\ \cite{zheng2018improved}.
\begin{corollary}\label{cor: bennett}
Let $Y_1,\ldots,Y_n$ be independent random variables, $\mathbb{E}[Y_i]=0$, $\mathbb{E}[Y_i^2]=\sigma_i^2$, and $|Y_i|<M$ almost surely. Then for $y>0$,
$$
\probability*{\sum_{i=1}^nY_i>y}\leq \exp\left(-\frac{y}{M}\left(\log\left(1+\frac{yM}{\sum_{i=1}^n\sigma_i^2}\right)-1\right)\right).
$$
\end{corollary}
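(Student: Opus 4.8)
The plan is to derive Corollary \ref{cor: bennett} directly from Bennett's inequality (Lemma \ref{lem: bennett}) by replacing the function $h$ with a simpler lower bound. First I would abbreviate $V=\sum_{i=1}^n\sigma_i^2$ and introduce the variable $x=yM/V$, which is strictly positive since $y>0$, $M>0$ and $V>0$. With this notation the exponent furnished by Lemma \ref{lem: bennett} reads $-\frac{V}{M^2}h(x)=-\frac{V}{M^2}\bigl((1+x)\log(1+x)-x\bigr)$, while the target exponent in the corollary, after using $y/M=xV/M^2$, can be written as $-\frac{V}{M^2}\bigl(x\log(1+x)-x\bigr)$.

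The key step is then a one-line comparison of the two exponents. Subtracting, their difference is $-\frac{V}{M^2}\bigl((1+x)\log(1+x)-x\log(1+x)\bigr)=-\frac{V}{M^2}\log(1+x)$, since the $-x$ terms cancel and $(1+x)-x=1$. For $x>0$ this quantity is strictly negative, so Bennett's exponent is no larger than the corollary's exponent. Because $\exp$ is increasing, taking exponentials preserves this ordering, and the probability bound of Lemma \ref{lem: bennett} is therefore dominated by the bound claimed in the corollary. Substituting $x=yM/V$ back in yields exactly the stated inequality.

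Equivalently, I would phrase the whole argument as the single elementary estimate $h(x)=(1+x)\log(1+x)-x\geq x\log(1+x)-x=x\bigl(\log(1+x)-1\bigr)$, valid for all $x\geq 0$ precisely because the gap between the two sides equals $\log(1+x)\geq 0$. Feeding this lower bound on $h$ into Lemma \ref{lem: bennett} immediately produces the corollary.

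There is no genuine obstacle here, as the statement is merely an elementary weakening of Bennett's inequality. The one point requiring care is the bookkeeping of signs: both exponents are negative, so the inequality $h(x)\geq x(\log(1+x)-1)$ translates into the desired \emph{upper} bound on the probability only after observing that a larger value of $h$ gives a more negative exponent and hence a \emph{tighter} bound, which is what legitimizes replacing it by the looser expression in the corollary.
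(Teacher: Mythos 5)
Your proposal is correct and follows exactly the paper's own route: the paper proves the corollary by observing that $h(x)>x(\log(1+x)-1)$ for $x>0$ and then invoking Lemma \ref{lem: bennett}, which is precisely your key estimate (you additionally identify the gap as exactly $\log(1+x)$). The extra bookkeeping you provide on signs and on rewriting both exponents in terms of $x=yM/\sum_{i=1}^n\sigma_i^2$ is a more explicit version of the same one-line argument.
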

\begin{proof}
Observe that for $x>0$ we get that $h(x)>x(\log(1+x)-1)$. Now, the corollary follows from Lemma \ref{lem: bennett}.
\end{proof}
Though in \cite[Lem.\ B.1]{scholMOR} it is proven that for Weibull-distributed random variables $\max_{i\leq N}\sum_{j=1}^nA_{i,j}b_j/b_N\LimitP \max_{j\leq n}b_j$, as $N\to\infty$, which heuristically explains the nature of our main result, our approximations in Equations \eqref{subeq: approx 1}--\eqref{subeq: approx 4} suggest that we should take the sum of $\lfloor tc_N\rfloor$ random variables. In \cite[Lem.\ B.1]{scholMOR} however, $n$ does not depend on $N$. Thus, we cannot resort to \cite[Lem.\ B.1]{scholMOR} in our proofs. However, in Lemma \ref{lem: large deviations sum}, a result is presented that we can use in this paper and proves the approximations in Equations \eqref{subeq: approx 1}--\eqref{subeq: approx 4}.
\begin{lemma}[{{\cite[Thm.\ 2]{brosset2022large}}}]\label{lem: large deviations sum}
Let $Y_1,\ldots,Y_n$ be independent random variables, with $\log\probability{Y_i>x}\sim -qx^{\alpha}$, as $x\to\infty$, with $0<\alpha<1$ and $q>0$. Let $(x_n,n\geq 1)$ be a sequence such that $\lim_{n\to\infty}x_n/n^{\frac{1}{(2-\alpha)}}=\infty$. Then 
$$
\lim_{n\to\infty}\frac{1}{x_n^{\alpha}}\log\probability*{\sum_{i=1}^nY_i>x_n}=-q.
$$
\end{lemma}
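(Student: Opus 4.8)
The plan is to establish matching logarithmic bounds
\[
\liminf_{n\to\infty}\frac{1}{x_n^{\alpha}}\log\probability*{\sum_{i=1}^n Y_i>x_n}\geq -q
\qquad\text{and}\qquad
\limsup_{n\to\infty}\frac{1}{x_n^{\alpha}}\log\probability*{\sum_{i=1}^n Y_i>x_n}\leq -q,
\]
which is the classical statement that a sum of stretched-exponential summands obeys a single-big-jump principle once $x_n$ leaves the Gaussian domain; the argument is therefore organised around separating the contribution of one dominant summand from that of the bulk. Two consequences of the hypothesis $x_n/n^{1/(2-\alpha)}\to\infty$ are used throughout. Since $\alpha/(2-\alpha)\in(0,1)$, the quantity $n^{\alpha/(2-\alpha)}$ is a genuine power of $n$, so $\log n=o(x_n^{\alpha})$ and the factor $n$ counting the location of a big jump is negligible at scale $x_n^{\alpha}$. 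Moreover $x_n^{2}=x_n^{\alpha}x_n^{2-\alpha}$ together with $x_n^{2-\alpha}\gg n$ gives $x_n^{2}/n\gg x_n^{\alpha}$, which says that any Gaussian-type (bulk) contribution decays faster than $e^{-qx_n^{\alpha}}$ and may be discarded. Because the right tail is stretched-exponential, all positive moments of $Y_i$ are finite; I work with centered summands, $\mathbb{E}[Y_i]=0$ and $\mathbb{E}[Y_i^2]=\sigma^2<\infty$, as in the setting of the cited theorem.

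For the lower bound I would force a single big jump. With $S_{n-1}=\sum_{i=2}^n Y_i$, independence gives for every $\epsilon>0$
\[
\probability*{\sum_{i=1}^n Y_i>x_n}\ \geq\ \probability{Y_1>(1+\epsilon)x_n}\,\probability*{S_{n-1}>-\epsilon x_n}.
\]
Chebyshev's inequality yields $\probability*{S_{n-1}\leq-\epsilon x_n}\leq (n-1)\sigma^2/(\epsilon^2 x_n^2)\to0$ because $x_n^2\gg n$, so the second factor tends to $1$, while $\log\probability{Y_1>(1+\epsilon)x_n}\sim-q(1+\epsilon)^{\alpha}x_n^{\alpha}$. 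Dividing by $x_n^{\alpha}$ and letting $\epsilon\downarrow0$ gives the lower bound.

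The upper bound is the crux and is where I expect the real work. Fixing $\epsilon>0$, I would split
\[
\probability*{\sum_{i=1}^n Y_i>x_n}\ \leq\ n\,\probability{Y_1>(1-\epsilon)x_n}+\probability*{\sum_{i=1}^n Y_i>x_n,\ \max_{i\leq n}Y_i\leq(1-\epsilon)x_n}.
\]
The first term has logarithm $\log n-q(1-\epsilon)^{\alpha}x_n^{\alpha}(1+o(1))\sim-q(1-\epsilon)^{\alpha}x_n^{\alpha}$, which matches $-q$ as $\epsilon\downarrow0$. For the second term, the event that $x_n$ is reached with no summand near $x_n$, a one-level truncation plus Bennett is insufficient, since truncating near $x_n$ only yields a polynomially small estimate. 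The correct route is a two-scale (Fuk--Nagaev-type) argument: truncate the summands at an intermediate height $h_n$ of order $x_n^{1-\alpha}$ (up to logarithmic factors). The truncated bulk is controlled by Corollary \ref{cor: bennett} with $M=h_n$, giving exponent of order $(x_n/h_n)\log(x_n h_n/(n\sigma^2))=x_n^{\alpha}\log(x_n^{2-\alpha}/n)$, which dominates $x_n^{\alpha}$ precisely because $x_n^{2-\alpha}\gg n$, so the bulk is negligible. The summands exceeding $h_n$ but below $(1-\epsilon)x_n$ are treated as jumps: if several such values $a_1,\dots,a_m$ must accumulate a total of order $x_n$, then the concavity of $t\mapsto t^{\alpha}$ for $\alpha<1$ forces $\sum_j a_j^{\alpha}\geq c_{\epsilon}x_n^{\alpha}$ with $c_{\epsilon}>1$ whenever $\sum_j a_j\gtrsim x_n$ and $\max_j a_j\leq(1-\epsilon)x_n$. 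Crucially, $Y_i^{\alpha}$ has an exponential tail of rate $q$, so the power-sum $\sum_i Y_i^{\alpha}\mathbbm{1}\{Y_i>h_n\}$ admits finite exponential moments for $\lambda<q$, and a Chernoff bound gives $\probability*{\sum_i Y_i^{\alpha}\mathbbm{1}\{Y_i>h_n\}\geq c_{\epsilon}x_n^{\alpha}}\leq e^{-qc_{\epsilon}x_n^{\alpha}(1+o(1))}$, which strictly beats $e^{-qx_n^{\alpha}}$ since $c_{\epsilon}>1$.

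The main obstacle is making the single truncation level $h_n\asymp x_n^{1-\alpha}$ do triple duty: it must be low enough that the Bennett estimate on the bulk beats $e^{-qx_n^{\alpha}}$ (using $x_n^{2}/n\gg x_n^{\alpha}$), high enough that the per-summand correction $n\,e^{-(q-\lambda)h_n^{\alpha}}$ in the exponential-moment bound is $o(x_n^{\alpha})$ (which holds because $h_n^{\alpha}\asymp x_n^{\alpha(1-\alpha)}$ is a genuine power of $n$ and hence dominates $\log n$), and placed so that the concavity gain $c_{\epsilon}>1$ survives. Verifying that all three estimates hold simultaneously under the one hypothesis $x_n\gg n^{1/(2-\alpha)}$ is the delicate part; it is precisely the content of \cite[Thm.\ 2]{brosset2022large}. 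Combining the two terms and letting $\epsilon\downarrow0$ then yields $\limsup_n x_n^{-\alpha}\log\probability*{\sum_i Y_i>x_n}\leq-q$, completing the lemma.
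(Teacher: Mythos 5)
The paper does not prove this lemma at all: it is imported verbatim as \cite[Thm.\ 2]{brosset2022large}, so there is no in-paper proof to compare your attempt against. Judged on its own merits, your outline follows what is the standard route to results of this type (and is the same architecture the paper itself borrows from \cite{brosset2022large} inside the proof of Lemma \ref{lem: pointwise convergence}, where the same intermediate truncation level $b_N^{1-\alpha}$ appears): a single-big-jump lower bound via Chebyshev, valid because $x_n\gg n^{1/(2-\alpha)}\gg\sqrt{n}$; and an upper bound that splits off the event $\max_{i\leq n}Y_i>(1-\epsilon)x_n$, truncates the remainder at $h_n\asymp x_n^{1-\alpha}$, kills the bulk by Bennett (exponent of order $x_n^{\alpha}\log(x_n^{2-\alpha}/n)\gg x_n^{\alpha}$), and kills the mid-range jumps by a Chernoff bound on $\sum_i Y_i^{\alpha}\mathbbm{1}\{Y_i>h_n\}$ combined with a concavity gain. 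The quantitative claims check out: $\log n=o(x_n^{\alpha})$ and $x_n^{2}/n\gg x_n^{\alpha}$ are exactly equivalent to the hypothesis, and your observation that a one-level truncation near $x_n$ fails (Bennett then only yields a $\log$-order exponent) correctly identifies why the two-scale argument is necessary.

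Two points keep this from being a complete proof. First, and mainly, you close by deferring the ``delicate part'' --- the simultaneous verification of the three estimates --- to \cite[Thm.\ 2]{brosset2022large} itself, which is circular if the aim is to prove that theorem. The gap is fillable with what you already have: fix $0<\delta<\epsilon$ and use the inclusion $\{\sum_i Y_i>x_n,\ \max_i Y_i\leq(1-\epsilon)x_n\}\subseteq\{\sum_i Y_i\mathbbm{1}\{Y_i\leq h_n\}>\delta x_n\}\cup\{\sum_i Y_i\mathbbm{1}\{h_n<Y_i\leq(1-\epsilon)x_n\}>(1-\delta)x_n\}$; on the second event, minimizing $\sum_j a_j^{\alpha}$ subject to $\sum_j a_j\geq(1-\delta)x_n$ and $0\leq a_j\leq(1-\epsilon)x_n$ at an extreme point of the constraint set gives $\sum_j a_j^{\alpha}\geq\bigl((1-\epsilon)^{\alpha}+(\epsilon-\delta)^{\alpha}\bigr)x_n^{\alpha}=:c_{\epsilon,\delta}x_n^{\alpha}$, and $c_{\epsilon,\delta}>1$ for $\delta$ small because $(1-\epsilon)^{\alpha}+\epsilon^{\alpha}>1$; then Chernoff with any $\lambda\in(q/c_{\epsilon,\delta},q)$ strictly beats $e^{-qx_n^{\alpha}}$. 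Second, the lemma as stated in the paper omits the centering hypothesis; you correctly supply $\mathbb{E}[Y_i]=0$ (without it the claim is false in the regime $n^{1/(2-\alpha)}\ll x_n\ll n$), but note also that Corollary \ref{cor: bennett} assumes two-sided boundedness while your truncated summands are only bounded above, so you need the one-sided form of Bennett's inequality (or an additional truncation from below). These are repairs of execution, not of strategy.
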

We want to prove process convergence of the maximum waiting time to a limiting process, this limiting process is a function in $D[0,T]$. In \cite[Thm.\ 13.3]{billingsley2013convergence}, a result is given that guarantees the convergence of a process in $D[0,T]$ when three conditions are satisfied, which we will apply in this paper.
\begin{lemma}[{{\cite[Thm.\ 13.3]{billingsley2013convergence}}}]\label{lem: billingsley conv in D}
Assume a sequence of processes $(Y^{(N)}(t),t\in[0,T])$ and a process $(Y(t),t\in[0,T])$ in $D[0,T]$, equipped with the $d^0$ metric, satisfy the following conditions:
\begin{enumerate}
    \item For all $\{t_1,\ldots,t_k\}\subseteq [0,T]$: $(Y^{(N)}(t_1),\ldots,Y^{(N)}(t_k))\LimitD(Y(t_1),\ldots,Y(t_k))$ as $N\to\infty$.
    \item $Y(T)-Y(T-\delta)\overset{\mathbb{P}}{\longrightarrow}0$ as $\delta\downarrow 0$,
    and
    \item For $0<r<s<t<T$, $\epsilon,\eta>0$ there exists $N_0\geq 1$ and $\delta>0$ such that 
    $$\probability*{\sup_{s\in[r,t],t-r<\delta}\min\left(\bigg|Y^{(N)}(s)-Y^{(N)}(r)\bigg|,\bigg|Y^{(N)}(t)-Y^{(N)}(s)\bigg|\right)>\epsilon}\leq \eta$$
\end{enumerate}
for $N\geq N_0$.
Then $(Y^{(N)}(t),t\in[0,T])\LimitD (Y(t),t\in[0,T])$ as $N\to\infty$.
\end{lemma}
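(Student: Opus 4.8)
The plan is to obtain this criterion from the two classical ingredients for weak convergence in the Skorokhod space $(D[0,T],d^0)$, namely convergence of the finite-dimensional distributions together with tightness, packaged through Prokhorov's theorem. Since $(D[0,T],d^0)$ is a complete separable metric space, a tight sequence is relatively compact, and every subsequential weak limit is a probability measure on $D[0,T]$; by Condition 1 all such limits share the finite-dimensional distributions of $Y$, which determine the measure, so the whole sequence converges weakly to $Y$. The entire argument therefore reduces to establishing tightness of $(Y^{(N)})$, and Conditions 2 and 3 are exactly what is needed for that.

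First I would record the pointwise tightness that comes for free: by Condition 1 each real-valued sequence $(Y^{(N)}(t))$ converges in distribution and is hence tight on $\mathbb{R}$; in particular $(Y^{(N)}(0))$ is tight, supplying the compact-containment ingredient at a single time. Next I would invoke the standard characterization of tightness in $D[0,T]$ through the Skorokhod oscillation modulus
$$
w''(x,\delta)=\sup_{r\leq s\leq t,\;t-r\leq\delta}\min\bigl(|x(s)-x(r)|,\,|x(t)-x(s)|\bigr),
$$
the modulus adapted to the $J_1$ topology: it ignores an isolated jump but penalises genuine back-and-forth oscillation. Condition 3 says precisely that for every $\epsilon,\eta>0$ one can choose $\delta$ and $N_0$ with $\mathbb{P}(w''(Y^{(N)},\delta)\geq\epsilon)\leq\eta$ for $N\geq N_0$; together with the pointwise tightness this controls the paths on the interior of $[0,T]$.

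The delicate point, which I expect to be the main obstacle, is the behaviour near the right endpoint $T$: the modulus $w''$ cannot constrain a single jump occurring in the terminal interval $[T-\delta,T]$, so tightness could still fail there even under Condition 3. This is exactly what Condition 2 rules out. Joint finite-dimensional convergence from Condition 1 gives $Y^{(N)}(T)-Y^{(N)}(T-\delta)\LimitD Y(T)-Y(T-\delta)$ for each fixed $\delta$, whence, by the portmanteau theorem, $\limsup_N\mathbb{P}(|Y^{(N)}(T)-Y^{(N)}(T-\delta)|\geq\epsilon)\leq\mathbb{P}(|Y(T)-Y(T-\delta)|\geq\epsilon)$, and Condition 2 forces the right-hand side to $0$ as $\delta\downarrow0$. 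Since the $w''$ bound already excludes a spike in the terminal interval, controlling the increment over $[T-\delta,T]$ in this way controls the full terminal oscillation. Assembling the interior control, the pointwise tightness, and this endpoint control yields tightness in $(D[0,T],d^0)$, and the Prokhorov argument of the first paragraph then delivers the claim. The only genuinely technical bookkeeping lies in matching Condition 3 to $w''$ and in handling the terminal interval through Condition 2; everything else is the standard finite-dimensional/tightness packaging.
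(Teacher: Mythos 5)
The paper does not actually prove this lemma: it is imported verbatim from Billingsley \cite[Thm.\ 13.3]{billingsley2013convergence} as a tool, so there is no in-paper argument to compare yours against; the relevant benchmark is the proof in the cited source. Your sketch is, in substance, a correct reconstruction of that proof: Billingsley likewise shows that the three hypotheses imply tightness (his Theorem 13.2, with Condition 3 read as a bound on the modulus $w''$ --- your reading of the lemma's somewhat garbled quantifiers as $\mathbb{P}(w''(Y^{(N)},\delta)\geq \epsilon)\leq \eta$ is the intended one), and then combines tightness with finite-dimensional convergence via Prokhorov's theorem and the fact that finite-dimensional sets generate the Borel $\sigma$-field of $(D[0,T],d^0)$. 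Two pieces of bookkeeping that you elide deserve mention. First, $w''$ fails to control a single jump near \emph{either} endpoint; you handle $T$ through Condition 2 and the portmanteau argument, but the left endpoint needs the analogous control $\lim_{\delta\downarrow 0}\limsup_{N}\mathbb{P}\left(|Y^{(N)}(\delta)-Y^{(N)}(0)|\geq\epsilon\right)=0$, which follows from Condition 1 together with almost-sure right-continuity of $Y$ at $0$; no extra hypothesis is needed there, which is exactly why Condition 2 concerns only the right endpoint. Second, in the Prokhorov step, a weakly convergent subsequence $Y^{(N_k)}\Rightarrow Z$ yields convergence of finite-dimensional distributions only at times $t$ where $\mathbb{P}(Z(t)=Z(t-))=1$, since coordinate projections are not continuous on $D[0,T]$; the identification of $Z$ with $Y$ therefore goes through the standard remark that such times are co-countable, hence dense, and finite-dimensional distributions on a dense set (together with $0$ and $T$, where projections are continuous) determine the law. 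Neither point changes your structure; both are handled exactly this way in Billingsley's proof.
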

Finally, to prove pointwise convergence of the maximum waiting time process in \eqref{eq: conv in d main result} to the limiting random variable, we need to pay special attention to the case that $B$ is a regularly varying random variable with $1<\beta\leq 2$, since in this case the second moment of $B$ is not finite. In Lemma \ref{lem: second moment alpha small}, we give a useful convergence result of the second moment of $B$ conditioned on $B$ being bounded. 
\begin{lemma}\label{lem: second moment alpha small}
Let $B$ be a positive random variable that satisfies $\probability{B>x}=L(x)/x^{\beta}$, with $L(x)$ a slowly varying function and $1<\beta\leq 2$. Then, 
$$
\frac{\mathbb{E}[B^2|B<r]}{r}\to 0,
$$
as $r\to\infty$.
\end{lemma}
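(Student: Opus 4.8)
The plan is to reduce the conditional second moment to a tail integral and then estimate it by regular variation. Writing $\mathbb{E}[B^2\mid B<r]=\mathbb{E}[B^2\mathbbm{1}_{\{B<r\}}]/\probability{B<r}$ and observing that $\probability{B<r}\to 1$ as $r\to\infty$, it suffices to prove that $\mathbb{E}[B^2\mathbbm{1}_{\{B<r\}}]/r\to 0$. Since all quantities involved are nonnegative, it is enough to produce an upper bound for this ratio that vanishes.

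The first step is to pass to a tail representation. Using the layer-cake identity $B^2=\int_0^\infty 2y\,\mathbbm{1}_{\{B>y\}}\,dy$ and Fubini's theorem,
\[
\mathbb{E}[B^2\mathbbm{1}_{\{B<r\}}]=\int_0^r 2y\,\probability{y<B<r}\,dy\le\int_0^r 2y\,\probability{B>y}\,dy,
\]
so it remains to control the last integral. I would split it as $\int_0^{y_0}+\int_{y_0}^r$ for a fixed $y_0>0$. Because $2y\,\probability{B>y}\le 2y$, the first piece is a finite constant that contributes only $O(1)$ and therefore vanishes after division by $r$.

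For the tail piece I substitute $\probability{B>y}=L(y)/y^{\beta}$ and invoke Potter's bounds for the slowly varying function $L$; cf.\ \cite{bingham1989regular}. Fix $\delta$ with $0<\delta<\beta-1$, which is possible since $\beta>1$. Then for $y\geq y_0$ large enough we have $L(y)\le C_\delta\,y^{\delta}$, whence $2y\,\probability{B>y}\le 2C_\delta\,y^{1-\beta+\delta}$. Because $\beta\le 2$, the exponent satisfies $1-\beta+\delta>-1$, so integrating yields $\int_{y_0}^r 2C_\delta\,y^{1-\beta+\delta}\,dy=O\!\left(r^{2-\beta+\delta}\right)$. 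Dividing by $r$ gives $O\!\left(r^{1-\beta+\delta}\right)$, and the choice $\delta<\beta-1$ forces $1-\beta+\delta<0$, so the bound tends to $0$ as $r\to\infty$, which completes the argument.

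This lemma is not hard; the only genuine subtlety is the boundary case $\beta=2$. A direct Karamata estimate there produces the slowly varying function $\int_{1}^{r}L(y)/y\,dy$ rather than a pure power of $r$, and one would then separately argue that a slowly varying function divided by $r$ vanishes. The Potter-bound route above sidesteps this by treating $1<\beta<2$ and $\beta=2$ uniformly, so I expect no real obstacle beyond keeping the choice of $\delta$ consistent with the constraint $\beta>1$.
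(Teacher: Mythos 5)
Your proof is correct, but it takes a different route from the paper's. The paper's argument is a two-line truncation bound with no integration at all: choose $0<\epsilon<\beta-1$, note that $\mathbb{E}[B^{\beta-\epsilon}]<\infty$ (moments of order below the index of regular variation are finite), and use the pointwise bound $B^2=B^{\beta-\epsilon}B^{2-(\beta-\epsilon)}\leq B^{\beta-\epsilon}\,r^{2-(\beta-\epsilon)}$ on the event $\{B<r\}$, which gives $\mathbb{E}[B^2\mid B<r]/r\leq r^{1-\beta+\epsilon}\,\mathbb{E}[B^{\beta-\epsilon}]\to 0$. You instead work directly with the tail: layer-cake representation, a split of the integral at a fixed $y_0$, and a Potter bound $L(y)\leq C_\delta y^{\delta}$ on the tail piece. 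Both arguments hinge on the same trade—sacrificing a small power $\epsilon$ (resp.\ $\delta$) strictly below $\beta-1$—and both terminate in a bound of order $r^{1-\beta+\delta}$. What the paper's version buys is brevity: the Karamata-type computation you carry out explicitly is absorbed into the standard fact $\mathbb{E}[B^{\beta-\epsilon}]<\infty$, which is itself proved by exactly the kind of Potter estimate you invoke, so the content is the same but packaged more compactly. What your version buys is self-containedness and an explicit, uniform treatment of the boundary case $\beta=2$, which you correctly identify as the only delicate point; note that the paper's truncation bound handles $\beta=2$ just as smoothly, since it only requires $2-(\beta-\epsilon)>0$ and $1-\beta+\epsilon<0$.
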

\begin{proof}
Choose $0<\epsilon<\beta-1$. Because $\probability{B>x}= L(x)/x^{\beta}$, we have that $\mathbb{E}[B^{\beta-\epsilon}]<\infty$. Therefore,
$$
\frac{\mathbb{E}[B^2|B<r]}{r}\leq\frac{r^{2-(\beta-\epsilon)}}{r}\mathbb{E}[B^{\beta-\epsilon}]\to 0,
$$
as $r\to\infty$.
\end{proof}
\section{Convergence of the auxiliary process in $D[0,T]$}\label{sec: conv in D}\label{sec: process convergence}
In this section, we prove Theorem \ref{thm: auxiliary result waiting time heavy tail}. As explained in Section \ref{subsec: heuristic analysis}, we first remove the supremum functional from the random variable on the left-hand side in \eqref{eq: time convergence auxiliary process} and prove convergence of the process $(\max_{i\leq N}\sum_{j=1}^{\lfloor tc_N\rfloor} (A_{i,j}B_j-T_j)/c_N,t\in[0,T])$ to $(X_t-\mu t,t\in[0,T])$. To do so, we first show pointwise convergence in Lemma \ref{lem: pointwise convergence}; afterwards we prove process convergence in Lemma \ref{lem: conv in D sum}. In order to prove Lemma \ref{lem: conv in D sum}, we need two auxiliary results, which are given in Lemmas \ref{lem: conv in D max B} and \ref{lem: sup max B and process}. By using the continuous mapping theorem, Theorem \ref{thm: auxiliary result waiting time heavy tail} follows. 
\begin{lemma}\label{lem: pointwise convergence}
Given that Assumptions \ref{ass: waiting time}--\ref{ass: limiting process} hold, $t>0$, and $x>0$, then
\begin{align}\label{eq: pointwise convergence}
   \probability*{\max_{i\leq N}\sum_{j=1}^{\lfloor tc_N \rfloor}(A_{i,j}B_j-T_j)>xc_N}\LimitN 1-\exp\left(-\frac{t}{(x+\mu t)^{\beta}}\right).
\end{align}
\end{lemma}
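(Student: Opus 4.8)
The plan is to sandwich the probability between the analogous probabilities for the events $\{\max_{j\le\lfloor tc_N\rfloor}B_j>wr_N\}$, where I abbreviate $w:=x+\mu t$ and $r_N:=c_N/b_N$, and then let the error parameters vanish using that the limit derived in the heuristic, $\mathbb{P}(\max_{j\le\lfloor tc_N\rfloor}B_j\le zr_N)\to e^{-t/z^\beta}$, is continuous in $z$. As a reduction I would center the Weibull part: writing $A_{i,j}=\tilde A_{i,j}+\mathbb{E}[A]$ with $n:=\lfloor tc_N\rfloor$, one has $\sum_{j=1}^n(A_{i,j}B_j-T_j)=\sum_{j=1}^n\tilde A_{i,j}B_j+D_N$ with $D_N:=\sum_{j=1}^n(\mathbb{E}[A]B_j-T_j)$ independent of $i$. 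Since $\beta>1$ gives $\mathbb{E}[B]<\infty$, the weak law of large numbers yields $D_N/c_N\LimitP-\mu t$, so up to events of vanishing probability the threshold $xc_N-D_N$ lies in $[(w-\delta)c_N,(w+\delta)c_N]$; hence it suffices to prove $\mathbb{P}(\max_{i\le N}\sum_{j=1}^n\tilde A_{i,j}B_j>wc_N)\to 1-e^{-t/w^\beta}$ for each fixed $w>0$ and then send $\delta\downarrow0$.

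For the lower bound I set $j^\ast:=\argmax_{j\le n}B_j$ and $I^\ast:=\argmax_{i\le N}A_{i,j^\ast}$, and work on $\{\max_jB_j>(w+\epsilon)r_N\}$. The leading term satisfies $\tilde A_{I^\ast,j^\ast}B_{j^\ast}\ge(\max_iA_{i,j^\ast}-\mathbb{E}[A])(w+\epsilon)r_N$, and since $\max_iA_{i,j^\ast}/b_N\LimitP1$ by extreme-value theory while $b_Nr_N=c_N$, this is at least $(w+\tfrac{\epsilon}{2})c_N$ with high probability. The point that makes this clean is that $I^\ast$ depends only on column $j^\ast$, so conditionally on $I^\ast$ the remainder $\sum_{j\ne j^\ast}\tilde A_{I^\ast,j}B_j$ is a centered sum of independent terms with conditional variance $\mathrm{Var}(A)\sum_{j\ne j^\ast}B_j^2=o(c_N^2)$ with high probability (for $1<\beta\le2$ this uses that $\sum_jB_j^2$ is dominated by its maximal term, of order $r_N^2=o(c_N^2)$), so Chebyshev's inequality shows this remainder exceeds $-\tfrac{\epsilon}{2}c_N$ with high probability. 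Thus $\liminf_N\mathbb{P}(\max_i\sum_j\tilde A_{i,j}B_j>wc_N)\ge 1-e^{-t/(w+\epsilon)^\beta}$, and $\epsilon\downarrow0$ gives the lower bound.

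For the upper bound I would use $\mathbb{P}(\max_jB_j>(w-\epsilon)r_N)\to1-e^{-t/(w-\epsilon)^\beta}$ and show that the residual event is negligible, namely $\mathbb{P}(\max_i\sum_j\tilde A_{i,j}B_j>wc_N,\ \max_jB_j\le M)\to0$ with $M:=(w-\epsilon)r_N$. A union bound over servers followed by conditioning on $(B_j)$ reduces this to the single-server estimate $N\bar G_N\to0$, where $\bar G_N:=\mathbb{P}(\sum_j\tilde A_{1,j}b_j>wc_N\mid(B_j))$ and all weights obey $b_j\le M$. This is the hard part, and I expect it to be the main obstacle. A single big jump $\tilde A_{1,j}b_j\approx wc_N$ forces $\tilde A_{1,j}>wc_N/M=\rho b_N$ with $\rho:=w/(w-\epsilon)>1$, an event of probability $\exp(-q(\rho b_N)^\alpha(1+o(1)))=N^{-\rho^\alpha+o(1)}$; since $\rho^\alpha>1$, even after multiplying by $N$ and by the $O(1)$ number of weights of order $r_N$ this is $o(1)$. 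The delicate step is ruling out that the sum reaches $wc_N$ \emph{without} such a jump: I would split the weights dyadically by scale and, on each scale, bound the contribution by reducing to an unweighted i.i.d.\ sum of Weibull-like variables with a target of order $c_N$. Because the target stays at scale $c_N$ while the number of summands is at most of order $c_N$, the ratio $c_N/c_N^{1/(2-\alpha)}=c_N^{(1-\alpha)/(2-\alpha)}$ diverges, so Lemma \ref{lem: large deviations sum} applies and yields a bound $\exp(-\Theta(c_N^\alpha))=o(1/N)$ per scale; Corollary \ref{cor: bennett} handles the truncated low-scale part, and Lemma \ref{lem: second moment alpha small} supplies the variance control needed when $1<\beta\le2$. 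Summing the $O(\log r_N)$ scales preserves $N\bar G_N\to0$, and the main difficulty I anticipate is making this dyadic reduction uniform over all admissible weight configurations $\{b_j\le M\}$.

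Combining the two bounds gives $1-e^{-t/(w+\epsilon)^\beta}\le\liminf\le\limsup\le1-e^{-t/(w-\epsilon)^\beta}$ for the centered core probability, so letting $\epsilon\downarrow0$ and invoking continuity yields the limit $1-e^{-t/w^\beta}$; undoing the drift reduction with $\delta\downarrow0$ then gives \eqref{eq: pointwise convergence} with $w=x+\mu t$.
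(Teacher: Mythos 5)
Your overall sandwich structure (condition on $\max_{j\le\lfloor tc_N\rfloor}B_j$, prove matching upper and lower bounds, let the error parameters vanish) and your lower bound are sound and essentially parallel the paper's proof: the paper also isolates the big jump at $(i^*,j^*)$, uses $\max_iA_{i,j^*}/b_N\LimitP 1$, and controls the remainder sum by a law-of-large-numbers argument (citing Kesten) where you use a conditional Chebyshev bound; your preliminary centering of $A$ is a harmless variant. The problem is the upper bound.

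The step that fails is the reduction to the single-server estimate "$N\bar G_N\to 0$ uniformly over all admissible weight configurations $\{b_j\le M\}$", $M=(w-\epsilon)r_N$. This uniform statement is simply false when $1<\beta\le 2$ (and, in fact, whenever $\beta<(2-\alpha)/(1-\alpha)$). Take the configuration $b_j=M$ for all $j\le n$, $n=\lfloor tc_N\rfloor$. Then $\sum_j\tilde A_{1,j}b_j=M\sum_j\tilde A_{1,j}$ has conditional standard deviation of order $M\sqrt{n}\asymp r_N\sqrt{c_N}$, and $r_N\sqrt{c_N}/c_N=\sqrt{c_N}/b_N\to\infty$ for $\beta<2$ (since $c_N\approx b_N^{\beta/(\beta-1)}$ with exponent $>2$); so by the CLT the conditional probability of exceeding $wc_N$ is bounded away from $0$, and $N\bar G_N\to\infty$ for this configuration. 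Even for $\beta>2$ with $\beta<(2-\alpha)/(1-\alpha)$, the threshold for the unweighted sum is $\rho b_N$ with $n\approx c_N$ summands, which lies below the one-big-jump regime $n^{1/(2-\alpha)}$ of Lemma \ref{lem: large deviations sum}; the moderate-deviation cost is then $\exp\left(-\Theta\left(b_N^{(\beta-2)/(\beta-1)}\right)\right)$, whose exponent grows slower than $\log N$, so $N\bar G_N\to\infty$ again. Consequently no dyadic splitting of the weights can rescue a per-configuration bound: the obstacle you flagged as "the main difficulty" is not a technicality but an impossibility. What is true, and what the paper proves, is the averaged statement: one must retain the (conditional) law of the $B_j$'s, under which configurations with many weights of order $r_N$ have negligible probability and $\sum_jB_j^2$ is far smaller than the worst case $nM^2$. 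Concretely, the paper splits each $A_{i,j}$ at the level $(1+\epsilon)^{1-\alpha}b_N^{1-\alpha}$; for the small-$A$ part it applies Bennett's inequality (Corollary \ref{cor: bennett}) to the i.i.d.\ products $A_{i,j}\mathbbm{1}(A_{i,j}<(1+\epsilon)^{1-\alpha}b_N^{1-\alpha})B_j$ under the law of $B$ conditioned on $\{B\le M\}$ — this is where Lemma \ref{lem: second moment alpha small} enters, controlling $\mathbb{E}[B^2\mid B<r]/r$ for $1<\beta\le 2$ (note that in your fixed-weights reduction this lemma has no role to play) — and only for the large-$A$ part does it worst-case $B_j\le M$, which is then harmless because the Chernoff bound of Brosset et al.\ on $\sum_jA_{i,j}\mathbbm{1}(A_{i,j}\ge(1+\epsilon)^{1-\alpha}b_N^{1-\alpha})$ alone already beats the factor $N$. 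Your upper bound needs to be restructured along these lines.
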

\begin{proof}
The approach to prove this lemma is by analyzing upper and lower bounds of the probability given in \eqref{eq: pointwise convergence} and by proving that these bounds are sharp as $N\to\infty$. Thus, first we see that 
\begin{align}
&\probability*{\max_{i\leq N}\sum_{j=1}^{\lfloor tc_N \rfloor}(A_{i,j}B_j-T_j)>xc_N}\\
& \quad=\probability*{\max_{i\leq N}\sum_{j=1}^{\lfloor tc_N \rfloor}(A_{i,j}B_j-T_j)>xc_N\Bigg|  \max_{j\leq \lfloor tc_N \rfloor}B_j>(x+\mu t-\delta)\frac{c_N}{b_N}}\probability*{\max_{j\leq \lfloor tc_N \rfloor}B_j>(x+\mu t-\delta)\frac{c_N}{b_N}}\\
& \quad\quad+\probability*{\max_{i\leq N}\sum_{j=1}^{\lfloor tc_N \rfloor}(A_{i,j}B_j-T_j)>xc_N\Bigg|  \max_{j\leq \lfloor tc_N \rfloor}B_j\leq(x+\mu t-\delta)\frac{c_N}{b_N}}\probability*{\max_{j\leq \lfloor tc_N \rfloor}B_j\leq(x+\mu t-\delta)\frac{c_N}{b_N}}\\
& \quad\leq \probability*{\max_{j\leq \lfloor tc_N \rfloor}B_j>(x+\mu t-\delta)\frac{c_N}{b_N}}+\probability*{\max_{i\leq N}\sum_{j=1}^{\lfloor tc_N \rfloor}(A_{i,j}B_j-T_j)>xc_N\Bigg|  \max_{j\leq \lfloor tc_N \rfloor}B_j\leq(x+\mu t-\delta)\frac{c_N}{b_N}}\label{eq: upper bound pointwise convergence}.
\end{align}
The first term in \eqref{eq: upper bound pointwise convergence} yields
\begin{align*}
\probability*{\max_{j\leq \lfloor tc_N \rfloor}B_j>(x+\mu t-\delta)\frac{c_N}{b_N}}&\sim 1-\left(1-\frac{L((x+\mu t-\delta)\frac{c_N}{b_N})}{\left((x+\mu t-\delta)\frac{c_N}{b_N}\right)^{\beta}}\right)^{\lfloor tc_N \rfloor}\\
&\LimitN 1-\exp\left(-\frac{t}{(x+\mu t-\delta)^{\beta}}\right)\\
&\overset{\delta\downarrow 0}{\longrightarrow} 1-\exp\left(-\frac{t}{(x+\mu t)^{\beta}}\right).
\end{align*}
Hence, in order to prove that the upper bound of \eqref{eq: pointwise convergence} is asymptotically sharp, we are left with proving that the second term in \eqref{eq: upper bound pointwise convergence} vanishes as $N\to\infty$. We analyze this term as follows; first, we have that $\frac{(x+\mu t-\delta/2)}{(x+\mu t-\delta)}>1$ for $\delta$ small enough, thus we write $\frac{(x+\mu t-\delta/2)}{(x+\mu t-\delta)}=1+\epsilon$ with $\epsilon>0$. Second, we can bound the second term in \eqref{eq: upper bound pointwise convergence} as
\begin{align}
    &\probability*{\max_{i\leq N}\sum_{j=1}^{\lfloor tc_N \rfloor}(A_{i,j}B_j-T_j)>xc_N\Bigg|  \max_{j\leq \lfloor tc_N \rfloor} B_j\leq(x+\mu t-\delta)\frac{c_N}{b_N}}\\
    & \quad\leq\mathbb{P}\Bigg(\max_{i\leq N}\sum_{j=1}^{\lfloor tc_N \rfloor}(A_{i,j}\mathbbm{1}(A_{i,j}\leq (1+\epsilon)^{1-\alpha}b_N^{1-\alpha})B_j-T_j)+\max_{i\leq N}\sum_{j=1}^{\lfloor tc_N \rfloor}A_{i,j}\mathbbm{1}(A_{i,j}\geq (1+\epsilon)^{1-\alpha}b_N^{1-\alpha})B_j>xc_N\label{subeq: upper bound auxiliary process 1}\\
    &\quad\quad\quad\quad\Bigg|   \max_{j\leq \lfloor tc_N \rfloor}B_j\leq(x+\mu t-\delta)\frac{c_N}{b_N}\Bigg)\nonumber\\
       &  \quad\leq\probability*{\max_{i\leq N}\sum_{j=1}^{\lfloor tc_N \rfloor}A_{i,j}\mathbbm{1}(A_{i,j}<(1+\epsilon)^{1-\alpha}b_N^{1-\alpha})B_j>\left(\mathbb{E}[A_{i,j}B_j]t+\frac{\delta}{4}\right)c_N\Bigg|  \max_{j\leq \lfloor tc_N \rfloor}B_j\leq(x+\mu t-\delta)\frac{c_N}{b_N}}\label{subeq: term LLN}\\
    &\quad\quad+\probability*{\sum_{j=1}^{\lfloor tc_N \rfloor}-T_j>\left(-\mathbb{E}[T_j]t+\frac{\delta}{4}\right)c_N}\label{subeq: term arrival stream}\\
    &\quad\quad+\probability*{\max_{i\leq N}\sum_{j=1}^{\lfloor tc_N \rfloor}A_{i,j}\mathbbm{1}(A_{i,j}\geq (1+\epsilon)^{1-\alpha}b_N^{1-\alpha})B_j>\left(x+\mu t-\frac{\delta}{2}\right)c_N\Bigg|  \max_{j\leq \lfloor tc_N \rfloor}B_j\leq(x+\mu t-\delta)\frac{c_N}{b_N}}\label{subeq: extreme term}.
\end{align}
The upper bound in \eqref{subeq: upper bound auxiliary process 1} holds because for a sequence of numbers $(a_{i,j},i\geq 1,j\geq 1)$, we have that 
$$
\max_{i\leq N}\left(\sum_{j=1}^ka_{i,j}\right)\leq\max_{i\leq N}\sum_{j=1}^ka_{i,j}\mathbbm{1}(a_{i,j}\leq c)+\max_{i\leq N}\sum_{j=1}^ka_{i,j}\mathbbm{1}(a_{i,j}> c).$$ 
The upper bound from \eqref{subeq: upper bound auxiliary process 1} to \eqref{subeq: term LLN}, \eqref{subeq: term arrival stream}, and \eqref{subeq: extreme term} holds because of the union bound. The term in \eqref{subeq: term arrival stream} converges to 0 due to the law of large numbers. For the term in \eqref{subeq: term LLN}, we know by the union bound that
\begin{multline}\label{eq: lln part}
    \probability*{\max_{i\leq N}\sum_{j=1}^{\lfloor tc_N \rfloor}A_{i,j}\mathbbm{1}(A_{i,j}<(1+\epsilon)^{1-\alpha}b_N^{1-\alpha})B_j>\left(\mathbb{E}[A_{i,j}B_j]t+\frac{\delta}{4}\right)c_N\Bigg|  \max_{j\leq \lfloor tc_N \rfloor}B_j\leq(x+\mu t-\delta)\frac{c_N}{b_N}}\\
    \leq N \probability*{\sum_{j=1}^{\lfloor tc_N \rfloor}A_{i,j}\mathbbm{1}(A_{i,j}<(1+\epsilon)^{1-\alpha}b_N^{1-\alpha})B_j>\left(\mathbb{E}[A_{i,j}B_j]t+\frac{\delta}{4}\right)c_N\Bigg|  \max_{j\leq \lfloor tc_N \rfloor}B_j\leq(x+\mu t-\delta)\frac{c_N}{b_N}}.
\end{multline}
Now, since we have a probability of sums of almost surely bounded random variables, we can apply Bennett's inequality with the setting given in Lemma \ref{lem: bennett} and Corollary \ref{cor: bennett}. We see that $\mathbb{E}[A_{i,j}\mathbbm{1}(A_{i,j}<(1+\epsilon)^{1-\alpha}b_N^{1-\alpha})B_j\mid B_j\leq(x+\mu t-\delta)\frac{c_N}{b_N}]<\mathbb{E}[A_{i,j}B_j]$. Furthermore, we can choose $M$ as $M=(x+\mu t-\delta)(1+\epsilon)^{1-\alpha}b_N^{1-\alpha}\frac{c_N}{b_N}$, and $y$ as $y=\frac{\delta}{4}c_N$. Thus, 
$$
\frac{y}{M}=\frac{\delta}{4(x+\mu t-\delta)(1+\epsilon)^{1-\alpha}}b_N^{\alpha}=\frac{\delta}{4(x+\mu t-\delta)(1+\epsilon)^{1-\alpha}q}\log N.
$$
It is important to note here, that $y/M$ equals a constant times $\log N$. We now add a subscript $N$ to the variables $y,M$, and $\sigma_i$ to indicate sequences that change with $N$. Now, for $\beta>2$, $\limsup_{N\to\infty}\sigma_{i,N}^2<\infty$. Thus,
$$
\frac{y_NM_N}{\sum_{j=1}^{\lfloor tc_N \rfloor}\sigma_{i,N}^2}\LimitN \infty.
$$
Therefore, using the information that $y/M$ equals a constant times $\log N$ and by using Corollary \ref{cor: bennett}, we see that the exponent in Corollary \ref{cor: bennett} grows faster to infinity than $\log N$.
Thus, by applying Bennett's inequality, we get that the expression in \eqref{eq: lln part} converges to 0 as $N\to\infty$. When $1<\beta\leq 2$, $\sigma_{i,N}^2\LimitN\infty$, however, from Lemma \ref{lem: second moment alpha small} follows that $\sigma_{i,N}^2/(\frac{c_N}{b_N})\LimitN 0$. Therefore, $y_NM_N/\sum_{j=1}^{\lfloor tc_N \rfloor}\sigma_{i,N}^2\LimitN\infty$. Concluding, from Corollary \ref{cor: bennett} we again get that the expression in \eqref{eq: lln part} and therefore the expression in \eqref{subeq: term LLN} converge to 0.

Furthermore, for the term in \eqref{subeq: extreme term} we have that
\begin{multline*}
    \probability*{\max_{i\leq N}\sum_{j=1}^{\lfloor tc_N \rfloor}A_{i,j}\mathbbm{1}(A_{i,j}\geq (1+\epsilon)^{1-\alpha}b_N^{1-\alpha})B_j>\left(x+\mu t-\frac{\delta}{2}\right)c_N\Bigg|  \max_{j\leq \lfloor tc_N \rfloor}B_j\leq(x+\mu t-\delta)\frac{c_N}{b_N}}\\
    \leq \probability*{\max_{i\leq N}\sum_{j=1}^{\lfloor tc_N \rfloor}A_{i,j}\mathbbm{1}(A_{i,j}\geq (1+\epsilon)^{1-\alpha}b_N^{1-\alpha})>\frac{x+\mu t-\delta/2}{x+\mu t-\delta}b_N}.
\end{multline*}
We have $\frac{(x+\mu t-\delta/2)}{(x+\mu t-\delta)}=1+\epsilon$ with $\epsilon>0$, thus we can further simplify and bound this probability as follows:
\begin{align}
       &\probability*{\max_{i\leq N}\sum_{j=1}^{\lfloor tc_N \rfloor}A_{i,j}\mathbbm{1}(A_{i,j}\geq (1+\epsilon)^{1-\alpha}b_N^{1-\alpha})>(1+\epsilon)b_N}\nonumber\\
   &\quad  \leq   \probability*{\max_{i\leq N}\sum_{j=1}^{\lfloor tc_N \rfloor}A_{i,j}\mathbbm{1}(A_{i,j}\geq (1+\epsilon)^{1-\alpha}b_N^{1-\alpha})>(1+\epsilon)b_N\cap \max_{i\leq N}\max_{j\leq \lfloor tc_N \rfloor}A_{i,j}>(1+\epsilon)b_N}\label{subeq: brosset like ineq 1}\\
   &\quad\quad+  \probability*{\max_{i\leq N}\sum_{j=1}^{\lfloor tc_N \rfloor}A_{i,j}\mathbbm{1}(A_{i,j}\geq (1+\epsilon)^{1-\alpha}b_N^{1-\alpha})>(1+\epsilon)b_N\cap \max_{i\leq N}\max_{j\leq \lfloor tc_N \rfloor}A_{i,j}<(1+\epsilon)b_N}\label{subeq: brosset like ineq 2}.
\end{align}
Since 
$$
\frac{\max_{i\leq N}\max_{j\leq \lfloor tc_N \rfloor}A_{i,j} }{b_N}\LimitP 1,
$$
as $N\to\infty$, the term in \eqref{subeq: brosset like ineq 1} converges to 0 as $N\to\infty$, and we only need to focus on the term in \eqref{subeq: brosset like ineq 2}. Observe that by the union bound,
\begin{multline}
    \probability*{\max_{i\leq N}\sum_{j=1}^{\lfloor tc_N \rfloor}A_{i,j}\mathbbm{1}(A_{i,j}\geq (1+\epsilon)^{1-\alpha}b_N^{1-\alpha})>(1+\epsilon)b_N\cap \max_{i\leq N}\max_{j\leq \lfloor tc_N \rfloor}A_{i,j}<(1+\epsilon)b_N}\\
    \leq N  \probability*{\sum_{j=1}^{\lfloor tc_N \rfloor}A_{i,j}\mathbbm{1}((1+\epsilon)^{1-\alpha}b_N^{1-\alpha}\leq A_{i,j}\leq(1+\epsilon)b_N )>(1+\epsilon)b_N}.\label{subeq: brosset like ineq 3}
\end{multline}
Following the proof given in \cite[Lem.\ 8]{brosset2022large}, we assume without loss of generality that $q=1$ and choose $1/(1+\epsilon)^\alpha<q'<1$ and $q'<q''<1$. Now, we have by using Chernoff's bound, that for $\theta>0$,
\begin{multline*}
    N  \probability*{\sum_{j=1}^{\lfloor tc_N \rfloor}A_{i,j}\mathbbm{1}((1+\epsilon)^{1-\alpha}b_N^{1-\alpha}\leq A_{i,j}\leq(1+\epsilon)b_N )>(1+\epsilon)b_N}\\
    \leq N \left(1+\mathbb{E}\left[\exp\left(\theta A_{i,j}\right)\mathbbm{1}((1+\epsilon)^{1-\alpha}b_N^{1-\alpha}\leq A_{i,j}\leq(1+\epsilon)b_N)\right]\right)^{\lfloor tc_N \rfloor}\exp(-\theta(1+\epsilon)b_N).
\end{multline*}
Then, for $\theta=q'(1+\epsilon)^{\alpha-1}b_N^{\alpha-1}$, in \cite[Lem.\ 8]{brosset2022large} it is proven that for $N$ large enough
\begin{multline*}
\mathbb{E}\left[\exp\left(q'(1+\epsilon)^{\alpha-1}b_N^{\alpha-1}A_{i,j}\right)\mathbbm{1}((1+\epsilon)^{1-\alpha}b_N^{1-\alpha}\leq A_{i,j}\leq(1+\epsilon)b_N)\right]\\
\leq(1+q'(1+\epsilon)^{\alpha}b_N^{\alpha})\exp(q'-q''(1+\epsilon)^{\alpha(1-\alpha)}b_N^{\alpha(1-\alpha)}).
\end{multline*}
Now, by using the fact that $x>0$ we have the simple bound $1+x\leq \exp(x)$, and that $c_N=\tilde{L}(b_N)b_N^{\frac{\beta}{(\beta-1)}}$, it is easy to see that
$$
\left(1+(1+q'(1+\epsilon)^{\alpha}b_N^{\alpha})\exp(q'-q''(1+\epsilon)^{\alpha(1-\alpha)}b_N^{\alpha(1-\alpha)})\right)^{\lfloor tc_N \rfloor}\LimitN 1.
$$
Therefore, we know that Chernoff's bound with $\theta=q'(1+\epsilon)^{\alpha-1}b_N^{\alpha-1}$ applied to the expression in \eqref{subeq: brosset like ineq 3} satisfies
\begin{multline*}
    \limsup_{N\to\infty}N \left(1+\mathbb{E}\left[\exp\left(q'(1+\epsilon)^{\alpha-1}b_N^{\alpha-1}A_{i,j}\right)\mathbbm{1}((1+\epsilon)^{1-\alpha}b_N^{1-\alpha}\leq A_{i,j}\leq(1+\epsilon)b_N)\right]\right)^{\lfloor tc_N \rfloor}\\
    \cdot\exp(-q'(1+\epsilon)^{\alpha-1}b_N^{\alpha-1}(1+\epsilon)b_N)
    \leq \limsup_{N\to\infty} N\exp(-q'(1+\epsilon)^{\alpha}b_N^{\alpha}).
\end{multline*}
Since $q'>1/(1+\epsilon)^\alpha$, we have that $q'(1+\epsilon)^{\alpha}b_N^{\alpha}>\log N$ and therefore that $N\exp(- q'(1+\epsilon)^{\alpha}b_N^{\alpha})\LimitN 0$. Thus we can conclude that the expression in \eqref{subeq: brosset like ineq 3} converges to 0 as $N\to\infty$. From this, it follows that the term in \eqref{subeq: extreme term} converges to 0 as $N\to\infty$ as well, and we can conclude that the upper bound proposed in \eqref{eq: upper bound pointwise convergence} is asymptotically sharp. 

To prove a sharp lower bound for the probability in \eqref{eq: pointwise convergence}, observe that, because for a sequence $(a_{i,j},i\geq 1,j\geq 1)$ we have that $\max_{i\leq N}\sum_{j=1}^ka_{i,j}\geq\max_{i\leq N}\max_{j\leq k}a_{i,j}+\sum_{j=1,j\neq j^*}^k a_{i^*,j}$. 
\begin{multline}\label{eq: lower bound pointwise convergence}
    \liminf_{N\to\infty}\probability*{\max_{i\leq N}\sum_{j=1}^{\lfloor tc_N \rfloor}(A_{i,j}B_j-T_j)>xc_N}\\
    \geq \liminf_{N\to\infty}\probability*{\max_{i\leq N}A_{i,j^*(t)}\max_{j\leq \lfloor tc_N \rfloor}B_j-T_{j^*(t)}+\sum_{j=1,j\neq j^*(t)}^{\lfloor tc_N \rfloor}(A_{i^*(t),j}B_j-T_j)>xc_N},
\end{multline}
where $j^*(t)\in\argmax\{j:B_{j^*(t)}=\max_{j\leq \lfloor tc_N \rfloor}B_j\}$ and $i^*(t)\in\argmax\{i:A_{i,j^*(t)}=\max_{i\leq N}A_{i,j^*(t)}\}$. Because, $\max_{j\leq \lfloor tc_N \rfloor}B_j$ scales as $c_N/b_N$, we get that $\mathbb{E}[A]\max_{j\leq \lfloor tc_N \rfloor}B_j/c_N\LimitP 0$, as $N\to\infty$, and thus we have that $\sum_{j=1,j\neq j^*(t)}^{\lfloor tc_N \rfloor}(A_{i^*(t),j}B_j-T_j)/c_N\LimitP -\mu t$, cf.\ \cite[Thm.\ 1]{kesten1993convergence}. Furthermore,
$$
\probability*{\max_{i\leq N}\max_{j\leq \lfloor tc_N \rfloor}(A_{i,j}B_j)/c_N>x+\mu t}\LimitN 1-\exp(-t/(x+\mu t)^{\beta})
$$ 
and $T_{j^*(t)}/c_N\LimitP 0$ as $N\to\infty$. In conclusion, the lower bound in \eqref{eq: lower bound pointwise convergence} is sharp, as the limit is the same as the limit in \eqref{eq: pointwise convergence}.
\end{proof}

We have established pointwise convergence. In Lemma \ref{lem: conv in D sum}, we prove convergence in $D[0,T]$.
\begin{lemma}\label{lem: conv in D sum}
Given that Assumptions \ref{ass: waiting time}--\ref{ass: limiting process} hold, and $T>0$, then
\begin{align}\label{eq: process convergence sum}
\left(\frac{\max_{i\leq N}\sum_{j=1}^{\lfloor tc_N\rfloor}(A_{i,j}B_j-T_j)}{c_N},t\in[0,T]\right)\LimitD \left(X_t-\mu t,t\in[0,T]\right),
\end{align}
as $N\to\infty$.
\end{lemma}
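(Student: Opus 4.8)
The plan is to deduce the lemma from the two auxiliary results announced in the introduction by a converging-together (Slutsky-type) argument in $D[0,T]$. Write
\[
U^{(N)}(t):=\frac{\max_{i\leq N}\sum_{j=1}^{\lfloor tc_N\rfloor}(A_{i,j}B_j-T_j)}{c_N},\qquad
V^{(N)}(t):=\frac{\max_{j\leq \lfloor tc_N\rfloor}B_j}{\frac{c_N}{b_N}}-\mu t,
\]
for $t\in[0,T]$. Both are cadlag step functions, hence elements of $D[0,T]$, and the limit $(X_t-\mu t,t\in[0,T])$ also lies in $D[0,T]$ since the extremal process $(X_t)$ has cadlag (in fact non-decreasing, jump) paths. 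The heuristic in \eqref{subeq: approx 1}--\eqref{subeq: approx 4} indicates that the complicated process $U^{(N)}$ is uniformly close to the simpler surrogate $V^{(N)}$, whose stochastic part is governed solely by the regularly varying random variables and is non-decreasing in $t$; exploiting this is the whole point of the decomposition.

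First I would invoke Lemma \ref{lem: conv in D max B}, which supplies the process convergence $(V^{(N)}(t),t\in[0,T])\LimitD(X_t-\mu t,t\in[0,T])$ in $D[0,T]$. Second, Lemma \ref{lem: sup max B and process} furnishes the uniform closeness
\[
\sup_{t\in[0,T]}\big|U^{(N)}(t)-V^{(N)}(t)\big|\LimitP 0,
\]
as $N\to\infty$. Because the identity time change shows that the uniform metric dominates the Skorohod metric $d^0$, we have $d^0(U^{(N)},V^{(N)})\leq \sup_{t\in[0,T]}|U^{(N)}(t)-V^{(N)}(t)|$, so the right-hand side tending to $0$ in probability yields $d^0(U^{(N)},V^{(N)})\LimitP 0$. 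Since $(D[0,T],d^0)$ is a separable metric space, the converging-together theorem (the Skorohod-space analogue of Slutsky's lemma, cf.\ \cite[Thm.\ 3.1]{billingsley2013convergence}) applies: combining $V^{(N)}\LimitD(X_t-\mu t)$ with $d^0(U^{(N)},V^{(N)})\LimitP 0$ gives $U^{(N)}\LimitD(X_t-\mu t)$ in $D[0,T]$, which is exactly \eqref{eq: process convergence sum}.

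The genuinely hard work is carried by the two auxiliary lemmas rather than by this assembly step, and I expect Lemma \ref{lem: sup max B and process} to be the main obstacle: the fixed-$t$ estimate underlying Lemma \ref{lem: pointwise convergence} must be upgraded to a bound that is uniform in $t\in[0,T]$, which requires controlling the contribution of the truncated, non-extremal summands (via Bennett's inequality, Corollary \ref{cor: bennett}, and the large-deviation estimate of Lemma \ref{lem: large deviations sum}) simultaneously over the whole time interval. Within the present proof, by contrast, the only point that needs a word of care is that the convergence is transferred through the comparison of the two $N$-indexed processes to \emph{each other}, not to the limit; this is why the presence of jumps in $(X_t-\mu t)$ causes no difficulty, and why uniform closeness of $U^{(N)}$ to $V^{(N)}$, together with $d^0$-convergence of $V^{(N)}$, suffices to conclude.
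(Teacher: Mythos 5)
Your proposal is correct and follows essentially the same route as the paper: the paper likewise deduces the lemma by combining Lemma \ref{lem: conv in D max B} (convergence of the surrogate process driven by the $B_j$'s) with Lemma \ref{lem: sup max B and process} (uniform closeness of the two processes), compressing your Slutsky/converging-together argument into the phrase ``using the triangle inequality.'' Your write-up merely makes explicit the details the paper leaves implicit --- that the uniform metric dominates $d^0$ and that adding the deterministic drift $-\mu t$ (Lemma \ref{lem: conv in D max B} is stated without it) is a continuous operation --- so there is no substantive difference.
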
 
This lemma follows from the two results stated in Lemma \ref{lem: conv in D max B} and \ref{lem: sup max B and process}.
\begin{lemma}\label{lem: conv in D max B}
Given that Assumptions \ref{ass: waiting time}--\ref{ass: limiting process} hold, and $T>0$, then
\begin{align}\label{eq: process convergence max B}
    \left(\frac{\max_{j\leq \lfloor tc_N\rfloor}B_j}{\frac{c_N}{b_N}},t\in[0,T]\right)\LimitD\left(X_t,t\in[0,T]\right),
\end{align}
as $N\to\infty$.
\end{lemma}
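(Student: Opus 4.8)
The plan is to apply the three-part criterion of Lemma \ref{lem: billingsley conv in D} with $Y^{(N)}(t)=\max_{j\le\lfloor tc_N\rfloor}B_j/(c_N/b_N)$ and limit $Y(t)=X_t$. For finite-dimensional convergence, fix $0=t_0<t_1<\cdots<t_k\le T$ and set $D_m^{(N)}:=\max_{\lfloor t_{m-1}c_N\rfloor<j\le\lfloor t_m c_N\rfloor}B_j/(c_N/b_N)$. These block maxima are independent, since they depend on disjoint index sets, and the $m$-th block contains $n_m\sim(t_m-t_{m-1})c_N$ terms. Repeating the computation from Section \ref{subsec: heuristic analysis}, using that $L$ is slowly varying with $c_N/b_N\LimitN\infty$ and that $c_N$ satisfies \eqref{eq: property cN}, I get $\mathbb{P}(D_m^{(N)}\le x)=(1-L(xc_N/b_N)/(xc_N/b_N)^{\beta})^{n_m}$, and since $L(xc_N/b_N)/(xc_N/b_N)^{\beta}\sim 1/(x^{\beta}c_N)$ this converges to $\exp(-(t_m-t_{m-1})/x^{\beta})$. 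Hence $D_m^{(N)}\LimitD X_{(t_{m-1},t_m)}$ and, by independence, the vector $(D_1^{(N)},\dots,D_k^{(N)})$ converges to the independent vector $(X_{(t_0,t_1)},\dots,X_{(t_{k-1},t_k)})$. Applying the continuous map $(d_1,\dots,d_k)\mapsto(\max_{m\le 1}d_m,\dots,\max_{m\le k}d_m)$ together with the extremal structure $X_{t_\ell}=\max_{m\le\ell}X_{(t_{m-1},t_m)}$ from Assumption \ref{ass: limiting process}, the continuous mapping theorem yields $(Y^{(N)}(t_1),\dots,Y^{(N)}(t_k))\LimitD(X_{t_1},\dots,X_{t_k})$.

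Condition (2) of Lemma \ref{lem: billingsley conv in D} concerns only the limit. Since $X_T=\max(X_{T-\delta},X_{(T-\delta,T)})$ with $X_{(T-\delta,T)}\overset{d}{=}X_\delta$ and $\mathbb{P}(X_\delta\le x)=\exp(-\delta/x^{\beta})\to1$ for every $x>0$, I have $0\le X_T-X_{T-\delta}\le X_{(T-\delta,T)}$, and the right-hand side tends to $0$ in probability as $\delta\downarrow0$; this is immediate.

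The main work, and the step I expect to be the main obstacle, is condition (3), the double-jump (Aldous-type) modulus. The key observation is that $Y^{(N)}$ is non-decreasing and non-negative, so if $Y^{(N)}(s)-Y^{(N)}(r)>\epsilon$ and $Y^{(N)}(t)-Y^{(N)}(s)>\epsilon$ with $t-r<\delta$, then there are two indices $j_1<j_2$ in the window $(\lfloor rc_N\rfloor,\lfloor tc_N\rfloor]$ with $B_{j_1}/(c_N/b_N)>\epsilon$ and $B_{j_2}/(c_N/b_N)>\epsilon$. Thus it suffices to bound the probability that some window of length $\delta$ contains at least two exceedances of the level $\epsilon c_N/b_N$. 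I would cover $[0,T]$ by the $O(T/\delta)$ overlapping blocks $[m\delta,(m+2)\delta]$; since any interval of length $<\delta$ is contained in one of them, and each block contains at most $2\delta c_N+1$ of the $B_j$, monotonicity reduces the continuum of triples to these finitely many blocks. By \eqref{eq: property cN} the single-exceedance probability satisfies $p_N=\mathbb{P}(B>\epsilon c_N/b_N)\sim 1/(\epsilon^{\beta}c_N)$, so a second-moment (binomial) bound gives probability at most $\binom{2\delta c_N+1}{2}p_N^2\le C\delta^2/\epsilon^{2\beta}$ of two or more exceedances per block, uniformly for $N\ge N_0$ via a Potter-type upper bound on $p_N$.

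Summing over the $O(T/\delta)$ blocks and taking a union bound produces $\limsup_{N\to\infty}\mathbb{P}(\text{condition (3) event})\le C'T\delta/\epsilon^{2\beta}$, which is below any prescribed $\eta$ once $\delta$ is chosen small. The delicate points are precisely the reduction from all triples $(r,s,t)$ to finitely many blocks (handled by the overlapping cover and monotonicity) and the uniformity of the exceedance estimate in $N$. With conditions (1)--(3) verified, Lemma \ref{lem: billingsley conv in D} gives the convergence \eqref{eq: process convergence max B}.
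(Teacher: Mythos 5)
Your proof is correct, and while it rests on the same scaffold as the paper's own proof --- both verify the three conditions of Lemma \ref{lem: billingsley conv in D}, and the verification of condition (2) is identical --- the two substantive steps are carried out differently. For the finite-dimensional distributions, the paper fixes $k=2$, splits into the cases $x_2\le x_1$ and $x_2>x_1$, decomposes the joint cdf by conditioning into a product of marginal probabilities, and declares higher dimensions analogous; you instead pass to the independent block maxima $D_m^{(N)}$ over disjoint index windows, prove their marginal Fr\'echet limits, and recover the vector $(Y^{(N)}(t_1),\dots,Y^{(N)}(t_k))$ through the continuous max-mapping together with the structure $X_{t_\ell}=\max_{m\le\ell}X_{(t_{m-1},t_m)}$ from Assumption \ref{ass: limiting process}. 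This handles arbitrary $k$ in one stroke and makes the identification of the joint limit law transparent. For condition (3), the paper bounds the minimum of the two increments by half the total increment over $[r,t]$, stochastically dominates that by a fresh window maximum, and union-bounds single exceedances, obtaining a bound of order $\delta/(2\epsilon)^{\beta}$ for a fixed window; you instead exploit that two $\epsilon$-rises of a non-decreasing running maximum force two distinct indices with $B_j>\epsilon c_N/b_N$, count pairs to get $O(\delta^2/\epsilon^{2\beta})$ per block, and sum over the $O(T/\delta)$ overlapping blocks covering all windows of length less than $\delta$. Your version buys something real here: it controls the supremum over the window position $(r,t)$, which is what the double-jump modulus in Billingsley's theorem actually requires, and the quadratic-in-$\delta$ pair bound is precisely what lets the covering argument close --- a single-exceedance bound linear in $\delta$ per window, like the paper's, could not survive a union bound over $O(T/\delta)$ windows. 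The paper's argument, read literally, treats $(r,t)$ as fixed (as its phrasing of Lemma \ref{lem: billingsley conv in D} suggests), so on this point your treatment is, if anything, the more careful one.
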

\begin{lemma}\label{lem: sup max B and process}
Given that Assumptions \ref{ass: waiting time}--\ref{ass: limiting process} hold, and $T>0$, then we have that for all $\epsilon>0$
\begin{align}\label{eq: process convergence diff}
\probability*{\sup_{t\in[0,T]}\bigg|\frac{\max_{i\leq N}\sum_{j=1}^{\lfloor tc_N\rfloor}(A_{i,j}B_j-T_j)}{c_N}-\left(\frac{\max_{j\leq \lfloor tc_N\rfloor}B_j}{\frac{c_N}{b_N}}-\mu t\right)\bigg|>\epsilon}\LimitN 0.
\end{align}
\end{lemma}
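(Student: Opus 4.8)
The plan is to reduce the uniform statement to a bound over a finite grid and then reuse the one-point estimates of Lemma~\ref{lem: pointwise convergence}. First I would separate the arrival process: since $\sum_{j=1}^{\lfloor tc_N\rfloor}(A_{i,j}B_j-T_j)=\sum_{j=1}^{\lfloor tc_N\rfloor}A_{i,j}B_j-\sum_{j=1}^{\lfloor tc_N\rfloor}T_j$ and $\sum_j T_j$ does not depend on $i$, the $T$-contribution is $\frac{1}{c_N}\sum_{j=1}^{\lfloor tc_N\rfloor}T_j$, which is non-decreasing in $t$ and converges pointwise to the continuous limit $\mathbb{E}[T]\,t$; by the Pólya--Dini upgrade of monotone convergence to a continuous limit (a functional law of large numbers), $\sup_{t\in[0,T]}\big|\frac{1}{c_N}\sum_{j=1}^{\lfloor tc_N\rfloor}T_j-\mathbb{E}[T]t\big|\to 0$ almost surely. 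Using $-\mu=\mathbb{E}[AB]-\mathbb{E}[T]$, it then remains to control
$$
\sup_{t\in[0,T]}\left|\frac{\max_{i\le N}\sum_{j=1}^{\lfloor tc_N\rfloor}A_{i,j}B_j}{c_N}-\frac{\max_{j\le\lfloor tc_N\rfloor}B_j}{c_N/b_N}-\mathbb{E}[AB]\,t\right|.
$$

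Next I would discretise the supremum. Both $\max_{i\le N}\sum_{j=1}^{\lfloor tc_N\rfloor}A_{i,j}B_j$ and $\max_{j\le\lfloor tc_N\rfloor}B_j$ depend on $t$ only through $\lfloor tc_N\rfloor$, hence are constant on each interval $[n/c_N,(n+1)/c_N)$; the only genuinely $t$-dependent term is the Lipschitz drift, which varies by at most $\mathbb{E}[AB]/c_N$ across such an interval. Therefore the supremum over $[0,T]$ equals, up to an $O(1/c_N)$ error, the maximum over the $\lfloor Tc_N\rfloor+1$ lattice points $t=n/c_N$. For each fixed lattice point I would prove a two-sided tail bound with exactly the ingredients of Lemma~\ref{lem: pointwise convergence}: for the upper deviation, split $A_{i,j}$ into $A_{i,j}\mathbbm{1}(A_{i,j}\le(1+\epsilon)^{1-\alpha}b_N^{1-\alpha})$ and its complement, bound the truncated part by Bennett's inequality via Corollary~\ref{cor: bennett}, and bound the heavy part by the Chernoff/Brosset estimate behind Lemma~\ref{lem: large deviations sum} together with $\max_{i\le N}A_i/b_N\LimitP 1$; for the lower deviation, extract the single dominant product $\max_{i\le N}A_{i,j^\star}\cdot\max_{j\le n}B_j$ and apply the law of large numbers (Bennett again) to the remainder, exactly as in \eqref{eq: lower bound pointwise convergence}. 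Intuitively, this is what makes the \emph{difference} controllable even though the limit $X_t$ has jumps: a record among $B_1,\dots,B_{\lfloor tc_N\rfloor}$ makes both terms jump by the same amount $\approx b_N\max_j B_j/c_N$, so the jumps cancel.

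The step I expect to be the main obstacle is making the per-point estimates strong enough to survive the union bound over the $\asymp Tc_N$ lattice points. In Lemma~\ref{lem: pointwise convergence} each bad event had probability of order $N\exp(-q'(1+\epsilon)^{\alpha}b_N^{\alpha})$, which tends to $0$ precisely because $q'(1+\epsilon)^{\alpha}b_N^{\alpha}>\log N$; here I must instead show $c_N\cdot N\exp(-q'(1+\epsilon)^{\alpha}b_N^{\alpha})\to0$, and likewise that the Bennett exponent beats $\log(Nc_N)$ rather than $\log N$. This works because $c_N=\tilde L(b_N)b_N^{\beta/(\beta-1)}$ grows only polynomially (times a slowly varying factor) in $b_N$, so $\log c_N=o(b_N^{\alpha})$, whereas the rates in both Corollary~\ref{cor: bennett} and Lemma~\ref{lem: large deviations sum} exceed the threshold $q b_N^{\alpha}=\log N$ by a fixed positive multiple of $b_N^{\alpha}$; this definite margin absorbs the extra $\log c_N$. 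Checking that the one-point rates hold uniformly in $n\le\lfloor Tc_N\rfloor$ (the worst case being $n=\lfloor Tc_N\rfloor$, with the most summands), and that the conditioning $\{\max_{j\le\lfloor Tc_N\rfloor}B_j\le K c_N/b_N\}$ used to furnish the almost-sure bound $M$ in Bennett's inequality can be imposed uniformly, are the remaining delicate but routine points.
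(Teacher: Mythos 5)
Your proposal is correct in outline but takes a genuinely different route from the paper. The paper never discretizes at the lattice scale: it splits the absolute value via $|X|=\max(X,-X)$, covers $[0,T]$ by a \emph{fixed} number $O(1/\delta)$ of intervals $[t,t+\delta]$, and inside each interval controls the supremum using positivity/monotonicity together with a record-counting result (\cite[Eq.\ (6)]{godreche2017record}: the expected number of new records of $\max_{j\leq\lfloor sc_N\rfloor}B_j$ in $[t,t+\delta]$ is asymptotically $\log((t+\delta)/t)$), so that the changing argmaxes $i^*(s),j^*(s)$ can be union-bounded over an asymptotically finite set; because the number of intervals stays bounded as $N\to\infty$, plain convergence in probability per interval suffices and no rates are needed. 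You instead peel off the $T_j$-part with a functional law of large numbers (valid, and it neatly sidesteps the fact that nothing beyond a finite mean is assumed for $T_j$, so a union bound over $\Theta(c_N)$ points applied to the $T$-sum could fail for infinite-variance $T$), reduce the supremum exactly to $\Theta(c_N)$ lattice points (correct: both maxima are piecewise constant in $t$ and the drift is Lipschitz), and pay for the union bound with quantitative per-point rates. Your margin computation is the right one: $\log c_N=O(\log\log N)=o(\log N)$, while the Bennett and Chernoff/Brosset exponents exceed $\log N$ by a fixed positive multiple of $b_N^{\alpha}$, so the extra factor $c_N$ is absorbed. In short, the paper buys qualitative inputs at the price of the record-process machinery; you avoid records entirely at the price of upgrading every pointwise estimate to an explicit rate.

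One step needs more work than you indicate. For the lower deviation you propose to argue ``exactly as in \eqref{eq: lower bound pointwise convergence}'', but that part of the proof of Lemma \ref{lem: pointwise convergence} is purely distributional: the remainder satisfies $\sum_{j=1,j\neq j^*(t)}^{\lfloor tc_N\rfloor}(A_{i^*(t),j}B_j-T_j)/c_N\LimitP-\mu t$ by the weak law (the paper cites \cite[Thm.\ 1]{kesten1993convergence}), which carries no rate and therefore cannot survive a union bound over $\Theta(c_N)$ points. You must replace it by a quantitative estimate: condition on the $B$'s, truncate $A$ at $(1+\epsilon)^{1-\alpha}b_N^{1-\alpha}$ and impose the global events $\max_{j\leq\lfloor Tc_N\rfloor}B_j\leq Kc_N/b_N$ and $\sum_{j\leq\lfloor Tc_N\rfloor}B_j\leq 2\mathbb{E}[B]Tc_N$ once (outside the union bound), then apply Corollary \ref{cor: bennett} to the lower tail of the resulting bounded weighted sums; note also that for lattice points with $n\leq\epsilon c_N/(2\mathbb{E}[A_{i,j}B_j])$ the lower-deviation event is empty because the sum is nonnegative, which is what keeps the bound uniform in $n$. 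All of this is achievable with the tools you name, so the gap is one of execution rather than concept, but it is a real piece of work your sketch currently delegates to a lemma that does not deliver it.
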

 Using the triangle inequality, we get that \eqref{eq: process convergence sum} follows from \eqref{eq: process convergence max B} and \eqref{eq: process convergence diff}.
\begin{proof}[Proof of Lemma \ref{lem: conv in D max B}]
In this proof, we use Lemma \ref{lem: billingsley conv in D}, thus we need to prove the three conditions stated in Lemma \ref{lem: billingsley conv in D}.
First of all, we need to prove that 
$$
\left(\frac{\max_{j\leq \lfloor t_1c_N\rfloor}B_j}{\frac{c_N}{b_N}},\ldots,\frac{\max_{j\leq \lfloor t_mc_N\rfloor}B_j}{\frac{c_N}{b_N}}\right)\LimitD\left(X_{t_1},\ldots,X_{t_m}\right),
$$
as $N\to\infty$. Let us assume that $m=2$ and $t_2>t_1$. If $x_2\leq x_1$, because $\max_{j\leq k}B_j$ is increasing in $k$, we have that 
\begin{multline*}
    \probability*{\frac{\max_{j\leq \lfloor t_1c_N\rfloor}B_j}{\frac{c_N}{b_N}}\leq x_1 \cap\frac{\max_{j\leq \lfloor t_2c_N\rfloor}B_j}{\frac{c_N}{b_N}}\leq x_2}= \probability*{\frac{\max_{j\leq \lfloor t_2c_N\rfloor}B_j}{\frac{c_N}{b_N}}\leq x_2}\\
    \LimitN \probability*{X_{t_2}\leq x_2}=\probability*{X_{t_1}\leq x_1\cap X_{t_2}\leq x_2}.
\end{multline*}
When $x_2>x_1$, we have that 
\begin{align*}
    &\probability*{\frac{\max_{j\leq \lfloor t_1c_N\rfloor}B_j}{\frac{c_N}{b_N}}\leq x_1 \cap\frac{\max_{j\leq \lfloor t_2c_N\rfloor}B_j}{\frac{c_N}{b_N}}\leq x_2}\\
    & \quad= \probability*{\frac{\max_{j\leq \lfloor t_2c_N\rfloor}B_j}{\frac{c_N}{b_N}}\leq x_2\Bigg|\frac{\max_{j\leq \lfloor t_1c_N\rfloor}B_j}{\frac{c_N}{b_N}}\leq x_1}\probability*{\frac{\max_{j\leq \lfloor t_1c_N\rfloor}B_j}{\frac{c_N}{b_N}}\leq x_1}\\
    & \quad=\probability*{\frac{\max_{j\leq \lfloor t_2c_N\rfloor-\lfloor t_1c_N\rfloor}B_j}{\frac{c_N}{b_N}}\leq x_2}\probability*{\frac{\max_{j\leq \lfloor t_1c_N\rfloor}B_j}{\frac{c_N}{b_N}}\leq x_1}\\
    & \quad\LimitN \probability{X_{t_2-t_1}\leq x_2}\probability{X_{t_1}\leq x_1}=\probability{X_{t_1}\leq x_1\cap X_{t_2}\leq x_2}.
\end{align*}
For $m>2$ but finite, analogous derivations hold. Second, we need to prove that 
$$
X_T-X_{T-\delta}\overset{\mathbb{P}}{\longrightarrow} 0,
$$
as $\delta\downarrow 0$. We can write $X_T=\max({X_{T-\delta},\hat{X}_{\delta}})$. Therefore, $X_T-X_{T-\delta}\leq \hat{X}_{\delta}$. Let $\epsilon>0$, then 
$$
\probability{X_T-X_{T-\delta}>\epsilon}\leq \probability{\hat{X}_{\delta}>\epsilon}=1-\exp\left(-\frac{\delta}{\epsilon^{\beta}}\right)\overset{\delta\downarrow 0}{\longrightarrow}0.
$$
Finally, we show that the process $ \left(\frac{\max_{j\leq \lfloor tc_N\rfloor}B_j}{(\frac{c_N}{b_N})},t\in[0,T]\right)$ satisfies the third condition in Lemma \ref{lem: billingsley conv in D}. The random variable $\max_{j\leq k}B_j$ is increasing with $k$. Furthermore, the minimum of two numbers is bounded from above by the average. Also, because for $k>l$, $\max_{j\leq k}B_j-\max_{j\leq l}B_j=\max(\max_{j\leq l}B_j,\max_{l+1\leq j \leq k}B_j)-\max_{j\leq l}B_j$, we can bound
$$\frac{\max_{j\leq \lfloor sc_N\rfloor}B_j-\max_{j\leq \lfloor rc_N\rfloor}B_j}{\frac{c_N}{b_N}}\leq_{st.} \frac{\max_{j\leq \lfloor sc_N\rfloor-\lfloor rc_N\rfloor}\hat{B}_j}{\frac{c_N}{b_N}},
$$
where $\hat{B}$ is an independent copy of $B$. Therefore, we have that 
\begin{align*}
 &\sup_{s\in[r,t]} \min\bigg|\frac{\max_{j\leq \lfloor sc_N\rfloor}B_j-\max_{j\leq \lfloor rc_N\rfloor}B_j}{\frac{c_N}{b_N}},\frac{\max_{j\leq \lfloor tc_N\rfloor}B_j-\max_{j\leq \lfloor sc_N\rfloor}B_j}{\frac{c_N}{b_N}}\bigg|\\
 &\quad=\sup_{s\in[r,t]} \min\left(\frac{\max_{j\leq \lfloor sc_N\rfloor}B_j-\max_{j\leq \lfloor rc_N\rfloor}B_j}{\frac{c_N}{b_N}},\frac{\max_{j\leq \lfloor tc_N\rfloor}B_j-\max_{j\leq \lfloor sc_N\rfloor}B_j}{\frac{c_N}{b_N}}\right)\\
  &\quad \leq \frac{\max_{j\leq \lfloor tc_N\rfloor}B_j-\max_{j\leq \lfloor rc_N\rfloor}B_j}{2\frac{c_N}{b_N}}\\
  &\quad\leq_{st.} \frac{\max_{j\leq \lfloor tc_N\rfloor-\lfloor rc_N\rfloor}\hat{B}_j}{2\frac{c_N}{b_N}}.
\end{align*}
Thus, using the expression in the third condition of Lemma \ref{lem: billingsley conv in D}, we obtain that
\begin{align*}
    &\probability*{ \sup_{s\in[r,t], t-r<\delta}\min\bigg|\frac{\max_{j\leq \lfloor sc_N\rfloor}B_j-\max_{j\leq \lfloor rc_N\rfloor}B_j}{\frac{c_N}{b_N}},\frac{\max_{j\leq \lfloor tc_N\rfloor}B_j-\max_{j\leq \lfloor sc_N\rfloor}B_j}{\frac{c_N}{b_N}}\bigg|>\epsilon}\\
    &\quad\leq\probability*{\frac{\max_{j\leq \lfloor \delta c_N\rfloor}\hat{B}_j}{\frac{c_N}{b_N}}>2\epsilon}\\
    &\quad\leq \lfloor \delta c_N\rfloor\frac{L(2\epsilon \frac{c_N}{b_N})}{(2\epsilon \frac{c_N}{b_N})^{\beta}}.
\end{align*}
We have that $c_N\frac{L(2\epsilon\frac{c_N}{b_N})}{(c_N/b_N)^{\beta}}\LimitN 1$ because $c_N\sim\frac{(c_N/b_N)^{\beta}}{L(c_N/b_N)}$ as $N\to\infty$ and $L$ is a slowly varying function, so we choose $N_0>1$ such that $\lfloor\delta c_N\rfloor\frac{L(2\epsilon \frac{c_N}{b_N})}{(2\epsilon \frac{c_N}{b_N})^{\beta}}<(1+\epsilon)\frac{\delta}{(2\epsilon)^{\beta}}$ for all $N>N_0$. Now, choose $0<\delta<\frac{\eta(2\epsilon)^{\beta}}{(1+\epsilon)}$ and we get that for $N>N_0$
\begin{align*}
    &\probability*{ \sup_{s\in[r,t],t-r<\delta}\min\bigg|\frac{\max_{j\leq \lfloor sc_N\rfloor}B_j-\max_{j\leq \lfloor rc_N\rfloor}B_j}{\frac{c_N}{b_N}},\frac{\max_{j\leq \lfloor tc_N\rfloor}B_j-\max_{j\leq \lfloor sc_N\rfloor}B_j}{\frac{c_N}{b_N}}\bigg|>\epsilon}<\eta.
    \end{align*}
Hence, the process $ \left(\frac{\max_{j\leq \lfloor tc_N\rfloor}B_j}{(\frac{c_N}{b_N})},t\in[0,T]\right)$ also satisfies the third condition in Lemma \ref{lem: billingsley conv in D} and the result follows.
\end{proof}
We have proven process convergence of $\left(\max_{j\leq \lfloor tc_N\rfloor}B_j/(\frac{c_N}{b_N}),t\in[0,T]\right)$ to $(X_t,t\in[0,T])$, now in order to prove Lemma \ref{lem: conv in D sum} we are left by proving that the convergence result in \eqref{eq: process convergence diff} holds. We do this in Lemma \ref{lem: sup max B and process}.

\begin{proof}[Proof of Lemma \ref{lem: sup max B and process}]
The random variable in \eqref{eq: process convergence diff} has the form of a supremum of the absolute value of a stochastic process. We know that $|X|=\max(X,-X)$. Then, by applying the union bound we get that $\mathbb{P}(|X|>x)\leq \probability{X>x}+\probability{-X>x}$. Thus, to prove the convergence result in \eqref{eq: process convergence diff} we can remove the absolute value and need to prove that the probability of a supremum of a stochastic process converges to 0 as $N\to\infty$, cf.\ \eqref{eq: process convergence auxiliary process}, and we need to prove that the probability of the supremum of the mirrored process converges to 0 as $N\to\infty$, cf.\ \eqref{eq: process convergence mirrored auxiliary process}. We first prove that 
\begin{align}\label{eq: process convergence auxiliary process}
\probability*{\sup_{t\in[0,T]}\left(-\frac{\max_{i\leq N}\sum_{j=1}^{\lfloor tc_N\rfloor}(A_{i,j}B_j-T_j)}{c_N}+\left(\frac{\max_{j\leq \lfloor tc_N\rfloor}B_j}{\frac{c_N}{b_N}}-\mu t\right)\right)>\epsilon}
\end{align}
converges to 0 as $N\to\infty$.
We have, by using $i^*(t)$ and $j^*(t)$ as defined in Lemma \ref{lem: pointwise convergence} that 
\begin{align}
&\probability*{\sup_{t\in[0,T]}\left(-\frac{\max_{i\leq N}\sum_{j=1}^{\lfloor tc_N\rfloor}(A_{i,j}B_j-T_j)}{c_N}+\left(\frac{\max_{j\leq \lfloor tc_N\rfloor}B_j}{\frac{c_N}{b_N}}-\mu t\right)\right)>\epsilon}\nonumber\\
&\quad\leq\mathbb{P}\Bigg(\sup_{t\in[0,T]}\Bigg(-\mu t-\frac{\sum_{j=1,j\neq j^*(t)}^{\lfloor tc_N \rfloor}(A_{i^*(t),j}B_j-T_j)-T_{j^*(t)}}{c_N}+\frac{\max_{j\leq \lfloor tc_N\rfloor}B_j}{\frac{c_N}{b_N}}\nonumber\\
&\quad\quad-\frac{\max_{i\leq N}A_{i,j^*(t)}\max_{j\leq \lfloor tc_N \rfloor}B_j}{c_N}\Bigg)>\epsilon\Bigg)\nonumber\\
&\quad\leq \probability*{\sup_{t\in[0,T]}\left(-\mu t-\frac{\sum_{j=1,j\neq j^*(t)}^{\lfloor tc_N \rfloor}(A_{i^*(t),j}B_j-T_j)-T_{j^*(t)}}{c_N}\right)>\frac{\epsilon}{2}}\label{subeq: law of large numbers sup}\\
&\quad\quad+\probability*{\sup_{t\in[0,T]}\left(\frac{\max_{j\leq \lfloor tc_N\rfloor}B_j}{\frac{c_N}{b_N}}-\frac{\max_{i\leq N}A_{i,j^*(t)}\max_{j\leq \lfloor tc_N \rfloor}B_j}{c_N}\right)>\frac{\epsilon}{2}}.\label{subeq: convergence maximum sup}
\end{align}
For the term in \eqref{subeq: law of large numbers sup}, we use the union bound to obtain that 
\begin{align}
    &\probability*{\sup_{t\in[0,T]}\left(-\mu t-\frac{\sum_{j=1,j\neq j^*(t)}^{\lfloor tc_N \rfloor}(A_{i^*(t),j}B_j-T_j)-T_{j^*(t)}}{c_N}\right)>\frac{\epsilon}{2}}\nonumber\\
    &\quad\leq \probability*{\sup_{t\in[0,\epsilon/(4\mathbb{E}[T_j])]}\left(-\mu t-\frac{\sum_{j=1,j\neq j^*(t)}^{\lfloor tc_N \rfloor}(A_{i^*(t),j}B_j-T_j)-T_{j^*(t)}}{c_N}\right)>\frac{\epsilon}{2}}\label{subeq: law of large numbers sup 1}\\
    &\quad\quad +\probability*{\sup_{t\in[\epsilon/(4\mathbb{E}[T_j]),T]}\left(-\mu t-\frac{\sum_{j=1,j\neq j^*(t)}^{\lfloor tc_N \rfloor}(A_{i^*(t),j}B_j-T_j)-T_{j^*(t)}}{c_N}\right)>\frac{\epsilon}{2}}.\label{subeq: law of large numbers sup 2}
\end{align}
Because all random variables $A_{i,j}$, $B_j$, and $T_j$ are positive, it is easy to see that the term in \eqref{subeq: law of large numbers sup 1} can be upper bounded by
$$
\probability*{\sup_{t\in[0,\epsilon/(4\mathbb{E}[T_j])]}\left(\frac{\sum_{j=1}^{\lfloor tc_N \rfloor}T_j}{c_N}\right)>\frac{\epsilon}{2}}=\probability*{\frac{\sum_{j=1}^{\lfloor \epsilon/(4\mathbb{E}[T_j])c_N \rfloor}T_j}{c_N}>\frac{\epsilon}{2}}\LimitN 0,
$$
as we can conclude from the law of large numbers that $\frac{\sum_{j=1}^{\lfloor \epsilon/(4\mathbb{E}[T_j])c_N \rfloor}T_j}{c_N}\LimitP \epsilon/4$ as $N\to\infty$.
For the term in \eqref{subeq: law of large numbers sup 2} we have for $0<\delta<1$, since all random variables are positive, that 
\begin{align}
&\probability*{\sup_{t\in[\epsilon/(4\mathbb{E}[T_j]),T]}\left(-\mu t-\frac{\sum_{j=1,j\neq j^*(t)}^{\lfloor tc_N \rfloor}(A_{i^*(t),j}B_j-T_j)-T_{j^*(t)}}{c_N}\right)>\frac{\epsilon}{2}}\nonumber\\
    &\quad\leq \sup_{t\in[\epsilon/(4\mathbb{E}[T_j]),T]}\frac{1}{\delta}\probability*{\sup_{s\in[t,t+\delta]}\left(-\mu s-\frac{\sum_{j=1,j\neq j^*(s)}^{\lfloor sc_N\rfloor}(A_{i^*(s),j}B_j-T_j)-T_{j^*(s)}}{c_N}\right)>\frac{\epsilon}{2}}\nonumber\\
    &\quad\leq \sup_{t\in[\epsilon/(4\mathbb{E}[T_j]),T]}\frac{1}{\delta}\probability*{\left(-\mu t-\frac{\inf_{s\in[t,t+\delta]}\sum_{j=1,j\neq j^*(t)}^{\lfloor tc_N\rfloor}A_{i^*(s),j}B_j}{c_N}+\frac{\sum_{j=1}^{\lfloor (t+\delta)c_N\rfloor}T_j}{c_N}\right)>\frac{\epsilon}{2}}.\label{eq: convergence supremum law of large numbers D[0,T]}
\end{align}
To bound the term in \eqref{eq: convergence supremum law of large numbers D[0,T]}, we use the result from \cite[Eq.\ (6)]{godreche2017record} that the expected number of new extremes of the process $(\max_{j\leq \lfloor sc_N\rfloor}B_j,s\geq 0)$ in the interval $[t,t+\delta]$ equals $\sum_{j=\lfloor tc_N\rfloor}^{\lfloor (t+\delta)c_N\rfloor}1/j\LimitN \log((t+\delta)/t)$. Therefore, we can conclude that the number of different instances of $i^*(s)$ when $s\in[t,t+\delta]$ is asymptotically finite, with probability converging to 1, and therefore, we can use the union bound to bound 
\begin{align*}
    &\sup_{t\in[\epsilon/(4\mathbb{E}[T_j]),T]}\frac{1}{\delta}\probability*{\left(-\mu t-\frac{\inf_{s\in[t,t+\delta]}\sum_{j=1,j\neq j^*(t)}^{\lfloor tc_N\rfloor}A_{i^*(s),j}B_j}{c_N}+\frac{\sum_{j=1}^{\lfloor (t+\delta)c_N\rfloor}T_j}{c_N}\right)>\frac{\epsilon}{2}}\\
    &\quad\leq \sup_{t\in[\epsilon/(4\mathbb{E}[T_j]),T]}\sum_{k=0}^{\infty}(k+1)\probability*{\# \text{ new extremes of } \left(\max_{j\leq \lfloor sc_N\rfloor}B_j,s\geq 0\right) \text{ in } [t,t+\delta]=k}\\
    &\quad\quad\cdot\frac{1}{\delta}\probability*{\left(-\mu t-\frac{\sum_{j=1,j\neq j^*(t)}^{\lfloor tc_N\rfloor}A_{i,j}B_j}{c_N}+\frac{\sum_{j=1}^{\lfloor (t+\delta)c_N\rfloor}T_j}{c_N}\right)>\frac{\epsilon}{2}}\\
    &\quad=\sup_{t\in[\epsilon/(4\mathbb{E}[T_j]),T]}\left(1+\sum_{j=\lfloor tc_N\rfloor}^{\lfloor (t+\delta)c_N\rfloor}\frac{1}{j}\right)\frac{1}{\delta}\probability*{\left(-\mu t-\frac{\sum_{j=1,j\neq j^*(t)}^{\lfloor tc_N\rfloor}A_{i,j}B_j}{c_N}+\frac{\sum_{j=1}^{\lfloor (t+\delta)c_N\rfloor}T_j}{c_N}\right)>\frac{\epsilon}{2}}.
\end{align*}
This last expression converges to 0 as $N\to\infty$, when $\delta>0$ is small enough; that is $0<\delta<\epsilon/(2\mathbb{E}[T_j])$, because by the law of large numbers we obtain that $-\mu t-\frac{\sum_{j=1,j\neq j^*(t)}^{\lfloor tc_N\rfloor}A_{i,j}B_j}{c_N}+\frac{\sum_{j=1}^{\lfloor (t+\delta)c_N\rfloor}T_j}{c_N}\LimitP -\mu t-\mathbb{E}[A_{i,j}B_j]t+(t+\delta)\mathbb{E}[T_j]=\mathbb{E}[T_j]\delta$ as $N\to\infty$. Thus, we can conclude that the expression in \eqref{eq: convergence supremum law of large numbers D[0,T]}, and therefore also in \eqref{subeq: law of large numbers sup} converge to 0, as $N\to\infty$. For the term in \eqref{subeq: convergence maximum sup}, we have that
\begin{multline}\label{eq: upper bound max B max A max B}
   \probability*{\sup_{t\in[0,T]}\left(\frac{\max_{j\leq \lfloor tc_N\rfloor}B_j}{\frac{c_N}{b_N}}-\frac{\max_{i\leq N}A_{i,j^*(t)}\max_{j\leq \lfloor tc_N \rfloor}B_j}{c_N}\right)>\frac{\epsilon}{2}}\\
    \leq \probability*{\sup_{t\in[0,T]}\left(1-\frac{\max_{i\leq N}A_{i,\lfloor tc_N\rfloor}}{b_N}\right)\frac{\max_{j\leq \lfloor T c_N\rfloor}B_j}{\frac{c_N}{b_N}}>\frac{\epsilon}{2}}.
\end{multline}
This tail probability converges to 0 as $N\to\infty$, since we know that $\frac{\max_{j\leq \lfloor T c_N\rfloor}B_j}{(\frac{c_N}{b_N})}$ converges in distribution to a Fr\'echet random variable as $N\to\infty$, and $\sup_{t\in[0,T]}\left(1-\frac{\max_{i\leq N}A_{i,\lfloor tc_N\rfloor}}{b_N}\right)\LimitP 0,$ as $N\to\infty$. To see this, we first bound
\begin{align*}
   \probability*{\sup_{t\in[0,T]}\left(1-\frac{\max_{i\leq N}A_{i,\lfloor tc_N\rfloor}}{b_N}\right)>\frac{\epsilon}{2}}=&\probability*{\inf_{t\in[0,T]}\frac{\max_{i\leq N}A_{i,\lfloor tc_N\rfloor}}{b_N}<1-\frac{\epsilon}{2}}\nonumber\\
   \leq& \lfloor Tc_N\rfloor\probability*{\frac{\max_{i\leq N}A_{i,1}}{b_N}<1-\frac{\epsilon}{2}}.
\end{align*}
Now, we have that $\probability*{\frac{\max_{i\leq N}A_{i,1}}{b_N}<1-\frac{\epsilon}{2}}\leq\exp\left(-N\probability*{\frac{A_{i,1}}{b_N}>1-\frac{\epsilon}{2}}\right)$, cf.\ the proof of \cite[Thm.\ 5.4.4, p.\ 192]{de2007extreme}; thus
\begin{align*}
    \lfloor Tc_N\rfloor\probability*{\frac{\max_{i\leq N}A_{i,1}}{b_N}<1-\frac{\epsilon}{2}}\leq&\lfloor Tc_N\rfloor\exp\left(-N\probability*{\frac{A_{i,1}}{b_N}>1-\frac{\epsilon}{2}}\right)\\
    =&\lfloor Tc_N\rfloor\exp\left(-N\exp(-(1+o(1))(1-\epsilon/2)^{\alpha}\log N)\right)\\
    =&\lfloor Tc_N\rfloor\exp\left(-N^{1-(1+o(1))(1-\epsilon/2)^{\alpha}}\right)\\
    \LimitN& 0.
\end{align*}
Hence, the upper bound in \eqref{eq: upper bound max B max A max B} converges to 0 as $N\to\infty$. These results together give that the tail probability in \eqref{eq: process convergence auxiliary process} converges to 0 as $N\to\infty$.

To prove the convergence result in \eqref{eq: process convergence diff}, we are left with proving that the probability
\begin{align}\label{eq: process convergence mirrored auxiliary process}
    \probability*{\sup_{t\in[0,T]}\left(\frac{\max_{i\leq N}\sum_{j=1}^{\lfloor tc_N\rfloor}(A_{i,j}B_j-T_j)}{c_N}-\left(\frac{\max_{j\leq \lfloor tc_N\rfloor}B_j}{\frac{c_N}{b_N}}-\mu t\right)\right)>\epsilon}
\end{align}
converges to 0 as $N\to\infty$. In order to do so, we use the upper bound
\begin{multline*}
\probability*{\sup_{t\in[0,T]}\left(\frac{\max_{i\leq N}\sum_{j=1}^{\lfloor tc_N\rfloor}(A_{i,j}B_j-T_j)}{c_N}-\left(\frac{\max_{j\leq \lfloor tc_N\rfloor}B_j}{\frac{c_N}{b_N}}-\mu t\right)\right)>\epsilon}\\
\leq \sup_{t\in[0,T]}\frac{1}{\delta}\probability*{\sup_{s\in[t,t+\delta]}\left(\frac{\max_{i\leq N}\sum_{j=1}^{\lfloor sc_N\rfloor}(A_{i,j}B_j-T_j)}{c_N}-\left(\frac{\max_{j\leq \lfloor sc_N\rfloor}B_j}{\frac{c_N}{b_N}}-\mu s\right)\right)>\epsilon}
\end{multline*}
with $0<\delta<1$.
Now, we bound this further as follows;
\begin{align}\label{eq: upper bound delta small}
   &\frac{1}{\delta}\probability*{\sup_{s\in[t,t+\delta]}\left(\frac{\max_{i\leq N}\sum_{j=1}^{\lfloor sc_N\rfloor}(A_{i,j}B_j-T_j)}{c_N}-\left(\frac{\max_{j\leq \lfloor sc_N\rfloor}B_j}{\frac{c_N}{b_N}}-\mu s\right)\right)>\epsilon}\\
   & \quad\leq \frac{1}{\delta}\probability*{\sup_{s\in[t,t+\delta]}\left(\frac{\max_{i\leq N}\sum_{j=1}^{\lfloor sc_N\rfloor}(A_{i,j}\mathbbm{1}(A_{i,j}<b_N^{1-\alpha})B_j-T_j)}{c_N}+\mu s\right)>\frac{\epsilon}{2}}\nonumber\\
   &\quad \quad+\frac{1}{\delta}\probability*{\sup_{s\in[t,t+\delta]}\left(\frac{\max_{i\leq N}\sum_{j=1}^{\lfloor sc_N\rfloor}A_{i,j}\mathbbm{1}(A_{i,j}>b_N^{1-\alpha})B_j}{c_N}-\frac{\max_{j\leq \lfloor sc_N\rfloor}B_j}{\frac{c_N}{b_N}}\right)>\frac{\epsilon}{2}}\nonumber.
\end{align}
The first term vanishes asymptotically, by taking $\delta$ small enough compared to $\epsilon$, by using a similar argument as for bounding \eqref{eq: convergence supremum law of large numbers D[0,T]} and by using the same argument as in Lemma \ref{lem: pointwise convergence}. For the second term, we see that from Lemma \ref{lem: pointwise convergence} that 
$$
\left(\frac{\max_{i\leq N}\sum_{j=1}^{\lfloor (t+\delta)c_N\rfloor}A_{i,j}\mathbbm{1}(A_{i,j}>b_N^{1-\alpha})}{b_N}-1\right)\LimitP 0,
$$
as $N\to\infty$. We also know that $\frac{\max_{j\leq \lfloor (t+\delta)c_N\rfloor}B_j}{(\frac{c_N}{b_N})}$ converges in distribution as $N\to\infty$. Now, we can conclude that
\begin{multline*}
    \frac{1}{\delta}\probability*{\sup_{s\in[t,t+\delta]}\left(\frac{\max_{i\leq N}\sum_{j=1}^{\lfloor sc_N\rfloor}A_{i,j}\mathbbm{1}(A_{i,j}>b_N^{1-\alpha})B_j}{c_N}-\frac{\max_{j\leq \lfloor sc_N\rfloor}B_j}{\frac{c_N}{b_N}}\right)>\frac{\epsilon}{2}}  \\
    \leq \frac{1}{\delta}\probability*{\left(\frac{\max_{i\leq N}\sum_{j=1}^{\lfloor (t+\delta)c_N\rfloor}A_{i,j}\mathbbm{1}(A_{i,j}>b_N^{1-\alpha})}{b_N}-1\right)\frac{\max_{j\leq \lfloor (t+\delta)c_N\rfloor}B_j}{\frac{c_N}{b_N}}>\frac{\epsilon}{2}}\LimitN 0.
\end{multline*}
Now we have established that the probabilities in \eqref{eq: process convergence auxiliary process} and \eqref{eq: process convergence mirrored auxiliary process} converge to 0 as $N\to\infty$, the result follows.
\end{proof}
From the results in Lemmas \ref{lem: conv in D sum} and \ref{lem: conv in D max B}, we can conclude that the convergence result in \eqref{eq: process convergence sum} holds. Furthermore, by applying the continuous mapping theorem, Theorem \ref{thm: auxiliary result waiting time heavy tail} follows. 
\section{Process convergence of the maximum waiting time in $D[0,T]$}
At this point, we have proven the convergence result of an auxiliary process whose marginals are the same as the marginals of the maximum waiting time. Now, we can extend these results to prove convergence of the maximum waiting time $(\frac{\max_{i\leq N}W_i(tc_N)}{c_N},t\in[0,T])$ to the process $(\sup_{s\in[0,t]}(X_{(s,t)}-\mu(t-s)),t\in[0,T])$ as $N\to\infty$. We first show in Lemma \ref{lem: sup max B and process2} that the maximum waiting time can be approximated by an auxiliary process, as we did in Lemma \ref{lem: sup max B and process}, and then we prove the main result described in Theorem \ref{thm: main result waiting time heavy tail}.
\begin{lemma}\label{lem: sup max B and process2}
Given that Assumptions \ref{ass: waiting time}--\ref{ass: limiting process} hold, and $T>0$, then we have that for all $\epsilon>0$
\begin{align}\label{eq: process convergence diff 2}
\probability*{\sup_{t\in[0,T]}\bigg|\sup_{s\in[0,t]}\frac{\max_{i\leq N}\sum_{j=\lfloor sc_N\rfloor}^{\lfloor tc_N\rfloor}(A_{i,j}B_j-T_j)}{c_N}-\sup_{s\in[0,t]}\left(\frac{\max_{\lfloor sc_N\rfloor\leq j\leq \lfloor tc_N\rfloor}B_j}{\frac{c_N}{b_N}}-\mu (t-s)\right)\bigg|>\epsilon}\LimitN 0.
\end{align}
\end{lemma}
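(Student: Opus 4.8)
The plan is to mimic the proof of Lemma~\ref{lem: sup max B and process}, the one genuinely new ingredient being the inner supremum over $s$. Write
\[
U_N(t)=\sup_{s\in[0,t]}\frac{\max_{i\leq N}\sum_{j=\lfloor sc_N\rfloor}^{\lfloor tc_N\rfloor}(A_{i,j}B_j-T_j)}{c_N},\qquad
V_N(t)=\sup_{s\in[0,t]}\left(\frac{\max_{\lfloor sc_N\rfloor\leq j\leq \lfloor tc_N\rfloor}B_j}{\frac{c_N}{b_N}}-\mu(t-s)\right).
\]
The first step is to exploit the elementary contraction property $|\sup_s f(s)-\sup_s g(s)|\leq \sup_s|f(s)-g(s)|$, which yields $\sup_{t\in[0,T]}|U_N(t)-V_N(t)|\leq \sup_{0\leq s\leq t\leq T}|D_N(s,t)|$, where $D_N(s,t)$ denotes the difference of the two integrands over the interval $[s,t]$. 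Thus it suffices to show $\probability*{\sup_{0\leq s\leq t\leq T}|D_N(s,t)|>\epsilon}\LimitN 0$. As in Lemma~\ref{lem: sup max B and process}, I would then write $|D_N|=\max(D_N,-D_N)$ and use the union bound to treat the two one-sided deviations separately.

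For the lower deviation $\sup_{0\le s\le t\le T}(-D_N(s,t))>\epsilon$, which is the two-parameter analogue of \eqref{eq: process convergence auxiliary process}, I would for each interval $[s,t]$ single out the record index $j^{*}$ attaining $\max_{\lfloor sc_N\rfloor\le j\le\lfloor tc_N\rfloor}B_j$ and the corresponding maximizing row $i^{*}$, bound $\max_i\sum_j(A_{i,j}B_j-T_j)$ from below by $\max_iA_{i,j^{*}}\cdot\max_jB_j-T_{j^{*}}+\sum_{j\neq j^{*}}(A_{i^{*},j}B_j-T_j)$, and then invoke three facts: the bulk increments satisfy the law of large numbers and concentrate at $-\mu(t-s)$; $T_{j^{*}}/c_N\LimitP 0$; and the leading discrepancy equals $(\max_iA_{i,j^{*}}/b_N-1)$ times the Fr\'echet-scale maximum $\max_jB_j/(c_N/b_N)$, which vanishes because $\max_iA_{i,j^{*}}/b_N\LimitP1$ uniformly via the bound $\probability*{\max_iA_{i,1}/b_N<1-\epsilon/2}\le\exp(-N\probability*{A_{i,1}/b_N>1-\epsilon/2})$ already used in Lemma~\ref{lem: sup max B and process}. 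For the upper deviation, the analogue of \eqref{eq: process convergence mirrored auxiliary process}, I would truncate $A_{i,j}$ at level $b_N^{1-\alpha}$, control the bulk part $A_{i,j}\mathbbm{1}(A_{i,j}<b_N^{1-\alpha})B_j$ by Bennett's inequality (Corollary~\ref{cor: bennett}) together with the law of large numbers, and match the heavy part $A_{i,j}\mathbbm{1}(A_{i,j}\ge b_N^{1-\alpha})B_j$ against the reference term $\max_jB_j/(c_N/b_N)$ using $\max_i\sum_jA_{i,j}\mathbbm{1}(A_{i,j}>b_N^{1-\alpha})/b_N\LimitP1$, established as in Lemma~\ref{lem: pointwise convergence}.

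The main obstacle is passing from pointwise control at a fixed pair $(s,t)$ to uniform control over the two-dimensional index set $\{(s,t):0\le s\le t\le T\}$. In Lemma~\ref{lem: sup max B and process} this was handled for a single time parameter by the discretization bound $\sup_t\frac1\delta\probability*{\sup_{s\in[t,t+\delta]}(\cdots)>\epsilon}$ together with the record-counting estimate of \cite[Eq.\ (6)]{godreche2017record}, which shows that the expected number of new records of $(\max_{j\le\lfloor sc_N\rfloor}B_j,s\ge0)$ in a window $[t,t+\delta]$ tends to $\log((t+\delta)/t)$, so that only finitely many distinct indices $i^{*}(s),j^{*}(s)$ must be union-bounded over. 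Here I would apply the same device in both endpoints, covering $\{(s,t)\}$ by $O(1/\delta^{2})$ cells of mesh $\delta$, bounding the oscillation of $D_N$ inside each cell through the monotonicity of the running maximum in $t$, the monotonicity of the drift, and the law of large numbers for the bulk increments, and controlling the record-index changes by the same Godr\`eche estimate. The doubling of the time parameter only squares the number of windows, so the resulting union bound stays asymptotically negligible; letting $N\to\infty$ and then $\delta\downarrow0$ yields \eqref{eq: process convergence diff 2}.
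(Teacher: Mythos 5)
Your proposal is correct and follows essentially the same route as the paper: reduce the difference of suprema to the supremum of the pairwise differences $|D_N(s,t)|$ over $0\leq s\leq t\leq T$, split via $|X|=\max(X,-X)$ and the union bound, and control each one-sided deviation uniformly through a $\delta$-mesh discretization in both time parameters (yielding the $1/\delta^2$ factor) combined with the record-index, truncation, Bennett, and law-of-large-numbers arguments of Lemma \ref{lem: sup max B and process}. The paper's own proof sets up exactly this reduction and then defers the remaining details to Lemma \ref{lem: sup max B and process}, just as you do.
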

\begin{proof}
As in Lemma \ref{lem: sup max B and process}, we first use that $|X|=\max(X,-X)$. Then, by applying the union bound we get that $\mathbb{P}(|X|>x)\leq \probability{X>x}+\probability{-X>x}$. Now, we have that 
\begin{multline*}
    \sup_{t\in[0,T]}\left(\sup_{s\in[0,t]}\frac{\max_{i\leq N}\sum_{j=\lfloor sc_N\rfloor}^{\lfloor tc_N\rfloor}(A_{i,j}B_j-T_j)}{c_N}-\sup_{s\in[0,t]}\left(\frac{\max_{\lfloor sc_N\rfloor\leq j\leq \lfloor tc_N\rfloor}B_j}{\frac{c_N}{b_N}}-\mu (t-s)\right)\right)\\
    \leq \sup_{t\in[0,T]}\sup_{s\in[0,t]}\left(\frac{\max_{i\leq N}\sum_{j=\lfloor sc_N\rfloor}^{\lfloor tc_N\rfloor}(A_{i,j}B_j-T_j)}{c_N}-\left(\frac{\max_{\lfloor sc_N\rfloor\leq j\leq \lfloor tc_N\rfloor}B_j}{\frac{c_N}{b_N}}-\mu (t-s)\right)\right).
\end{multline*}
Similarly,
\begin{multline*}
      \sup_{t\in[0,T]}\left(\sup_{s\in[0,t]}\left(\frac{\max_{\lfloor sc_N\rfloor\leq j\leq \lfloor tc_N\rfloor}B_j}{\frac{c_N}{b_N}}-\mu (t-s)\right)-\sup_{s\in[0,t]}\frac{\max_{i\leq N}\sum_{j=\lfloor sc_N\rfloor}^{\lfloor tc_N\rfloor}(A_{i,j}B_j-T_j)}{c_N}\right)\\
    \leq \sup_{t\in[0,T]}\sup_{s\in[0,t]}\left(\left(\frac{\max_{\lfloor sc_N\rfloor\leq j\leq \lfloor tc_N\rfloor}B_j}{\frac{c_N}{b_N}}-\mu (t-s)\right)-\frac{\max_{i\leq N}\sum_{j=\lfloor sc_N\rfloor}^{\lfloor tc_N\rfloor}(A_{i,j}B_j-T_j)}{c_N}\right).
\end{multline*}
Therefore,
\begin{multline*}
    \probability*{\sup_{t\in[0,T]}\bigg|\sup_{s\in[0,t]}\frac{\max_{i\leq N}\sum_{j=\lfloor sc_N\rfloor}^{\lfloor tc_N\rfloor}(A_{i,j}B_j-T_j)}{c_N}-\sup_{s\in[0,t]}\left(\frac{\max_{\lfloor sc_N\rfloor\leq j\leq \lfloor tc_N\rfloor}B_j}{\frac{c_N}{b_N}}-\mu (t-s)\right)\bigg|>\epsilon}\\
    \leq 2\probability*{\sup_{t\in[0,T]}\sup_{s\in[0,t]}\bigg|\frac{\max_{i\leq N}\sum_{j=\lfloor sc_N\rfloor}^{\lfloor tc_N\rfloor}(A_{i,j}B_j-T_j)}{c_N}-\left(\frac{\max_{\lfloor sc_N\rfloor\leq j\leq \lfloor tc_N\rfloor}B_j}{\frac{c_N}{b_N}}-\mu (t-s)\right)\bigg|>\epsilon}.
\end{multline*}
Now, we use the same approach as in Lemma \ref{lem: sup max B and process}, with the somewhat different upper bound:
\begin{multline*}
    \probability*{\sup_{t\in[0,T]}\sup_{s\in[0,t]}\left(\frac{\max_{i\leq N}\sum_{j=\lfloor sc_N\rfloor}^{\lfloor tc_N\rfloor}(A_{i,j}B_j-T_j)}{c_N}-\left(\frac{\max_{\lfloor sc_N\rfloor\leq j\leq \lfloor tc_N\rfloor}B_j}{\frac{c_N}{b_N}}-\mu (t-s)\right)\right)>\epsilon}\\
    \leq \frac{1}{\delta^2}\probability*{\sup_{r\in[t,t+\delta]}\sup_{q\in[s-\delta,s]}\left(\frac{\max_{i\leq N}\sum_{j=\lfloor qc_N\rfloor}^{\lfloor rc_N\rfloor}(A_{i,j}B_j-T_j)}{c_N}-\left(\frac{\max_{\lfloor qc_N\rfloor\leq j\leq \lfloor rc_N\rfloor}B_j}{\frac{c_N}{b_N}}-\mu (r-q)\right)\right)>\epsilon}.
\end{multline*}
Also, 
\begin{multline*}
    \probability*{\sup_{t\in[0,T]}\sup_{s\in[0,t]}\left(-\frac{\max_{i\leq N}\sum_{j=\lfloor sc_N\rfloor}^{\lfloor tc_N\rfloor}(A_{i,j}B_j-T_j)}{c_N}+\left(\frac{\max_{\lfloor sc_N\rfloor\leq j\leq \lfloor tc_N\rfloor}B_j}{\frac{c_N}{b_N}}-\mu (t-s)\right)\right)>\epsilon}\\
    \leq \frac{1}{\delta^2}\probability*{\sup_{r\in[t,t+\delta]}\sup_{q\in[s-\delta,s]}\left(-\frac{\max_{i\leq N}\sum_{j=\lfloor qc_N\rfloor}^{\lfloor rc_N\rfloor}(A_{i,j}B_j-T_j)}{c_N}+\left(\frac{\max_{\lfloor qc_N\rfloor\leq j\leq \lfloor rc_N\rfloor}B_j}{\frac{c_N}{b_N}}-\mu (r-q)\right)\right)>\epsilon}.
\end{multline*}
The proof that these upper bounds converge to 0 as $N\to\infty$ is analogous to the  proof of Lemma \ref{lem: sup max B and process}. 
\end{proof}

\begin{proof}[Proof of Theorem \ref{thm: main result waiting time heavy tail}]
We have proven in Lemma \ref{lem: sup max B and process2} that the maximum waiting time can be approximated with the process $\left(\sup_{s\in[0,t]}\left(\frac{\max_{\lfloor sc_N\rfloor\leq j\leq \lfloor tc_N\rfloor}B_j}{(\frac{c_N}{b_N})}-\mu (t-s)\right),t\in[0,T]\right)$. Therefore, in order to prove convergence of the maximum waiting time to the process $\left(\sup_{s\in[0,t]}(X_{(s,t)}-\mu(t-s)),t\in[0,T]\right)$ in $D[0,T]$, it suffices to prove convergence of the process $\left(\sup_{s\in[0,t]}\left(\frac{\max_{\lfloor sc_N\rfloor\leq j\leq \lfloor tc_N\rfloor}B_j}{(\frac{c_N}{b_N})}-\mu (t-s)\right),t\in[0,T]\right)$ to the process $\left(\sup_{s\in[0,t]}(X_{(s,t)}-\mu(t-s)),t\in[0,T]\right)$ in $D[0,T]$. As in Lemma \ref{lem: conv in D max B}, we again check the conditions given in Lemma \ref{lem: billingsley conv in D}. 

We start with proving the convergence of finite-dimensional distributions. To do this, we show that the joint probabilities of these processes can be written as operations of marginal probabilities, and therefore, the convergence of finite-dimensional distributions follows from the convergence of 1-dimensional distributions. Thus, we can write
\begin{multline}\label{eq: finite dimensional dist}
\probability*{\sup_{s\in[0,t_1]}(X_{(s,t_1)}-\mu(t_1- s))<x_1\cap \sup_{s\in[0,t_2]}(X_{(s,t_2)}-\mu(t_2- s))<x_2}
\\
=\probability*{\sup_{s\in[0,t_1]}(X_{(s,t_1)}+\mu s)<x_1+\mu t_1\Bigg| \sup_{s\in[0,t_2]}(X_{(s,t_2)}+\mu s)<x_2+\mu t_2}\probability*{\sup_{s\in[0,t_2]}(X_{(s,t_2)}+\mu s)<x_2+\mu t_2}.
\end{multline}
Now, we can further rewrite the event $\{\sup_{s\in[0,t_2]}(X_{(s,t_2)}+\mu s)<x_2+\mu t_2\}$ and relate this to the random variable $X_{(s,t_1)}$; namely:

\begin{multline*}
\left\{\sup_{s\in[0,t_2]}(X_{(s,t_2)}+\mu s)<x_2+\mu t_2\right\}\\=\left\{\sup_{s\in[0,t_1]}(X_{(s,t_1)}+\mu s)<x_2+\mu t_2\right\}\cap\Bigg\{X_{(t_1,t_2)}+\mu t_1<x_2+\mu t_2\Bigg\}\cap \left\{\sup_{s\in(t_1,t_2]}(X_{(s,t_2)}+\mu s)<x_2+\mu t_2\right\}.
\end{multline*}
Thus, when  $x_2+\mu t_2\leq x_1+\mu t_1$, then 
$$
\probability*{\sup_{s\in[0,t_1]}(X_{(s,t_1)}+\mu s)<x_1+\mu t_1\Bigg| \sup_{s\in[0,t_2]}(X_{(s,t_2)}+\mu s)<x_2+\mu t_2}=1,
$$
and when $x_2+\mu t_2> x_1+\mu t_1$,
$$
\probability*{\sup_{s\in[0,t_1]}(X_{(s,t_1)}+\mu s)<x_1+\mu t_1\Bigg| \sup_{s\in[0,t_2]}(X_{(s,t_2)}+\mu s)<x_2+\mu t_2}=\frac{\probability*{\sup_{s\in[0,t_1]}(X_{(s,t_1)}+\mu s)<x_1+\mu t_1}}{\probability*{\sup_{s\in[0,t_1]}(X_{(s,t_1)}+\mu s)<x_2+\mu t_2}}.
$$
From now on, we focus on the case $x_2+\mu t_2> x_1+\mu t_1$: the proof of the case $x_2+\mu t_2\leq x_1+\mu t_1$ is analogous. In conclusion,
\begin{multline}\label{eq: limit X s,t}
\probability*{\sup_{s\in[0,t_1]}(X_{(s,t_1)}-\mu(t_1- s))<x_1\cap \sup_{s\in[0,t_2]}(X_{(s,t_2)}-\mu(t_2- s))<x_2}\\
=\frac{\probability*{\sup_{s\in[0,t_1]}(X_{(s,t_1)}-\mu(t_1- s))<x_1}}{\probability*{\sup_{s\in[0,t_1]}(X_{(s,t_1)}-\mu(t_1- s))<x_2+\mu(t_2-t_1)}}\probability*{\sup_{s\in[0,t_2]}(X_{(s,t_2)}-\mu(t_2-s))<x_2}.
\end{multline}
Thus, we can write the joint probability in \eqref{eq: finite dimensional dist} as an operation of marginal probabilities. We can do the same for the process $\left(\sup_{s\in[0,t]}\left(\max_{\lfloor sc_N\rfloor\leq j\leq \lfloor tc_N\rfloor }\frac{B_j}{(\frac{c_N}{b_N})}-\mu(t- s)\right),t\in[0,T]\right)$:
\begin{align}
&\probability*{\sup_{s\in[0,t_1]}\left(\max_{\lfloor sc_N\rfloor\leq j\leq \lfloor t_1c_N\rfloor }\frac{B_j}{\frac{c_N}{b_N}}-\mu(t_1- s)\right)<x_1\cap\sup_{s\in[0,t_2]}\left(\max_{\lfloor sc_N\rfloor\leq j\leq \lfloor t_2c_N\rfloor }\frac{B_j}{\frac{c_N}{b_N}}-\mu(t_2- s)\right)<x_2}\label{subeq: limit B s,t}\\
& \quad=\frac{\probability*{\sup_{s\in[0,t_1]}(\max_{\lfloor sc_N\rfloor\leq j\leq \lfloor t_1c_N\rfloor }\frac{B_j}{(\frac{c_N}{b_N})}-\mu(t_1- s))<x_1}}{\probability*{\sup_{s\in[0,t_1]}(\max_{\lfloor sc_N\rfloor\leq j\leq \lfloor t_1c_N\rfloor }\frac{B_j}{(\frac{c_N}{b_N})}-\mu(t_1- s))<x_2+\mu(t_2-t_1)}}\nonumber\\
& \quad\quad\quad\quad\quad\quad\quad\quad\quad\quad\quad\quad\quad\quad\quad\quad\quad\quad\quad\quad\quad\quad\quad\quad\quad\cdot\probability*{\sup_{s\in[0,t_2]}\left(\max_{\lfloor sc_N\rfloor\leq j\leq \lfloor t_2c_N\rfloor }\frac{B_j}{\frac{c_N}{b_N}}-\mu(t_2-s)\right)<x_2}.
\end{align}
Using Lemma \ref{lem: conv in D max B} and the decomposition of a joint probability into marginal probabilities, we establish that the probability in \eqref{subeq: limit B s,t} converges to the probability in \eqref{eq: limit X s,t} as $N\to\infty$. Analogous extensions hold for higher dimensional distributions. Hence; convergence of finite-dimensional distributions follows. To prove process convergence, we show that the second and third condition of Lemma \ref{lem: billingsley conv in D} also hold.
To establish that the second condition holds, we observe that the following bound holds:
\begin{multline*}
    \probability*{\bigg|\sup_{s\in[0,T]}(X_{(s,T)}-\mu(T-s))-\sup_{s\in[0,T-\delta]}(X_{(s,T-\delta)}-\mu(T-\delta-s))\bigg|>\epsilon}\\
    \leq \probability*{\sup_{s\in[0,T]}(X_{(s,T)}+\mu s)-\sup_{s\in[0,T-\delta]}(X_{(s,T-\delta)}+\mu s)+\mu\delta>\epsilon}.
\end{multline*}
Now, we can further simplify this as follows:
\begin{align*}
    &\sup_{s\in[0,T]}(X_{(s,T)}+\mu s)-\sup_{s\in[0,T-\delta]}(X_{(s,T-\delta)}+\mu s)+\mu \delta\\
    &\quad=\max\left(\sup_{s\in[0,T-\delta]}(X_{(s,T)}+\mu s),\sup_{s\in[T-\delta,T]}(X_{(s,T)}+\mu s)\right)-\sup_{s\in[0,T-\delta]}(X_{(s,T-\delta)}+\mu s) +\mu \delta\\
    &\quad\leq \max\left(\sup_{s\in[0,T-\delta]}(X_{(s,T)}-X_{(s,T-\delta)}),X_{(T-\delta,T)}+\mu T-\sup_{s\in[0,T-\delta]}(X_{(s,T-\delta)}+\mu s) \right)+\mu\delta\\
    &\quad\leq \max\left(\sup_{s\in[0,T-\delta]}(X_{(s,T)}-X_{(s,T-\delta)}),X_{(T-\delta,T)}+\mu T-\mu(T-\delta) \right)+\mu\delta\\
    &\quad=X_{(T-\delta,T)}+2\mu\delta.
\end{align*}
We have that 
$$
\probability{X_{(T-\delta,T)}+2\mu\delta>\epsilon}=1-\exp\left(-\frac{\delta}{(\epsilon-2\mu\delta)^{\beta}}\right)\overset{\delta\downarrow 0}{\longrightarrow} 0.
$$
To establish that the third condition holds, we first observe that for $r\leq t$
\begin{multline*}
\bigg|\sup_{s\in[0,t]}\left(\frac{\max_{\lfloor sc_N \rfloor\leq j\leq \lfloor tc_N \rfloor}B_j}{\frac{c_N}{b_N}}-\mu(t-s)\right)-\sup_{s\in[0,r]}\left(\frac{\max_{\lfloor sc_N \rfloor\leq j\leq \lfloor rc_N \rfloor}B_j}{\frac{c_N}{b_N}}-\mu(r-s)\right)\bigg|\\
\leq \sup_{s\in[0,t]}\left(\frac{\max_{\lfloor sc_N \rfloor\leq j\leq \lfloor tc_N \rfloor}B_j}{\frac{c_N}{b_N}}+\mu s\right)-\sup_{s\in[0,r]}\left(\frac{\max_{\lfloor sc_N \rfloor\leq j\leq \lfloor rc_N \rfloor}B_j}{\frac{c_N}{b_N}}+\mu s\right)+\mu(t-r).
\end{multline*}
With an analogous derivation as in the proof of Lemma \ref{lem: conv in D max B}, we see that the third condition holds. Thus, we have process convergence.

\end{proof}
\section{Steady-state convergence of the maximum waiting time}\label{sec: steady state}
Finally, we prove steady-state convergence of the longest of the $N$ waiting times. We give lower and upper bounds of $\probability{\max_{i\leq N}W_i(\infty)>xc_N}$ and show that these are asymptotically tight.
\begin{proof}[Proof of Theorem \ref{thm: steady state heavy tail}]
First of all, to prove a sharp lower bound, we first notice that $$
\max_{i\leq N}W_i(\infty)=\max_{i\leq N}\sup_{k\geq 0}\sum_{j=1}^k(A_{i,j}B_j-T_j).$$ Thus, the maximum steady-state waiting time is lower bounded by random variables of the form\\ $\max_{i\leq N}\sup_{0\leq k\leq l}\sum_{j=1}^k(A_{i,j}B_j-T_j)$ with $l>0$. Thus, by using the convergence result in \eqref{eq: time convergence auxiliary process} in Theorem \ref{thm: auxiliary result waiting time heavy tail}, we know that 
$$
\liminf_{N\to\infty}\probability*{\max_{i\leq N}W_i(\infty)>xc_N}\geq \lim_{N\to\infty}\probability*{\max_{i\leq N}\sup_{t\in[0,M]}\sum_{j=1}^{\lfloor tc_N\rfloor}(A_{i,j}B_j-T_j)>xc_N}= \probability*{\sup_{t\in[0,M]}(X_t-\mu t)>x}.
$$
Now, following Equations \eqref{eq: steady state explicit prob} and \eqref{eq: transient explicit prob} in Proposition \ref{prop: marginal steady state}, it is easy to see that
$$
\probability*{\sup_{t\in[0,M]}(X_t-\mu t)>x}\to\probability*{\sup_{t>0}(X_t-\mu t)>x},
$$
as $M\to\infty$. Thus, we have a tight lower bound.

Second, we want to find a tight upper bound for the tail probability of the steady-state maximum queue length. We have that
\begin{align}
&\probability*{\max_{i\leq N}W_i(\infty)>xc_N}\nonumber\\
&\quad=\probability*{\max_{i\leq N}\sup_{k\geq 0}\sum_{j=1}^{k}(A_{i,j}B_j-T_j)>xc_N}\nonumber\\
&\quad=\probability*{\max\left(\max_{i\leq N}\sup_{t\in[0,M]}\sum_{j=1}^{\lfloor tc_N\rfloor}(A_{i,j}B_j-T_j),\max_{i\leq N}\sup_{t>M}\sum_{j=1}^{\lfloor tc_N\rfloor}(A_{i,j}B_j-T_j)\right)>xc_N}\nonumber\\
&\quad\leq \probability*{\max_{i\leq N}\sup_{t\in[0,M]}\sum_{j=1}^{\lfloor tc_N\rfloor}(A_{i,j}B_j-T_j)>xc_N}+\probability*{\max_{i\leq N}\sup_{t>M}\sum_{j=1}^{\lfloor tc_N\rfloor}(A_{i,j}B_j-T_j)>xc_N}.\label{subeq: upper bound steady state}
\end{align}
For the first term in \eqref{subeq: upper bound steady state} we obtain that
$$
\probability*{\max_{i\leq N}\sup_{t\in[0,M]}\sum_{j=1}^{\lfloor tc_N\rfloor}(A_{i,j}B_j-T_j)>xc_N}\LimitN \probability*{\sup_{t\in[0,M]}X_t-\mu t>x}\to 1-\exp\left(-\frac{1}{\mu(\beta-1)x^{\beta-1}}\right),
$$
as $M\to\infty$. Thus, we need to prove that the second term in \eqref{subeq: upper bound steady state} asymptotically vanishes when $N,M\to\infty$.
Let $\hat{A}_{i,j}$ and $\hat{B}_j$ be independent copies of $A_{i,j}$ and $B_j$ respectively. Then, we can bound the second term in \eqref{subeq: upper bound steady state} as follows:
\begin{align} 
    &\probability*{\max_{i\leq N}\sup_{t>M}\sum_{j=1}^{\lfloor tc_N\rfloor}(A_{i,j}B_j-T_j)>xc_N}\nonumber\\
    &\quad=\probability*{\max_{i\leq N}\left(\sum_{j=1}^{\lfloor Mc_N\rfloor}(A_{i,j}B_j-T_j)+\sup_{k\geq 0}\sum_{j=1}^k(\hat{A}_{i,j}\hat{B}_j-\hat{T}_j)\right)>xc_N}\nonumber\\
    &\quad\leq \probability*{\max_{i\leq N}\sum_{j=1}^{\lfloor Mc_N\rfloor}(A_{i,j}B_j-T_j)+\max_{i\leq N}\sup_{k\geq 0}\sum_{j=1}^k(\hat{A}_{i,j}\hat{B}_j-\hat{T}_j)>xc_N}\label{subeq: upper bound 1}\\
    &\quad \leq \probability*{\max_{i\leq N}\sum_{j=1}^{\lfloor Mc_N\rfloor}(A_{i,j}B_j-T_j)>-\frac{\mu}{2}Mc_N}+\probability*{\max_{i\leq N}\sup_{k\geq 0}\sum_{j=1}^k(\hat{A}_{i,j}\hat{B}_j-\hat{T}_j)>\left(x+\frac{\mu}{2}M\right)c_N}.\label{subeq: upper bound 2}
\end{align}
The bound in \eqref{subeq: upper bound 1} holds because $\max_{i\leq N}(a_i+b_i)\leq \max_{i\leq N}a_i+\max_{i\leq N}b_i$. The bound in \eqref{subeq: upper bound 2} follows from the union bound.
For the first term in \eqref{subeq: upper bound 2} we have that
$$
\probability*{\max_{i\leq N}\sum_{j=1}^{\lfloor Mc_N\rfloor}(A_{i,j}B_j-T_j)>-\frac{\mu}{2}Mc_N}\LimitN 1-\exp\left(\frac{-M}{(\frac{\mu M}{2})^{\beta}}\right)\longrightarrow 0,
$$
as $M\to\infty$. In order to analyze the second term in \eqref{subeq: upper bound 2}, we use the fact that $\mathbb{E}[A_{i,j}B_j-T_j]=-\mu<0$, from this, it follows that there exists a $\gamma>1$, such that $\mathbb{E}[A_{i,j}B_j]<\mathbb{E}[T_j]/\gamma$. We write $\gamma\mathbb{E}[A_{i,j}B_j]-\mathbb{E}[T_j]=-\mu_{\gamma}<0$. Then,
\begin{align}
    &\probability*{\max_{i\leq N}\sup_{k\geq 0}\sum_{j=1}^k(\hat{A}_{i,j}\hat{B}_j-\hat{T}_j)>\left(x+\frac{\mu}{2}M\right)c_N}\nonumber\\
    &\quad\leq \probability*{\max_{i\leq N}\sup_{k\in[0,\lfloor c_N\rfloor]}\sum_{j=1}^k(\hat{A}_{i,j}\hat{B}_j-\hat{T}_j)>\left(x+\frac{\mu}{2}M\right)c_N}\nonumber\\
        &\quad\quad+\sum_{n=0}^{\infty}\probability*{\max_{i\leq N}\sup_{k\in[\lfloor \gamma^nc_N\rfloor,\lfloor \gamma^{n+1}c_N\rfloor]}\sum_{j=1}^k(\hat{A}_{i,j}\hat{B}_j-\hat{T}_j)>\left(x+\frac{\mu}{2}M\right)c_N}\nonumber\\
      &\quad  \leq  \probability*{\max_{i\leq N}\sup_{k\in[0,\lfloor c_N\rfloor]}\sum_{j=1}^k(\hat{A}_{i,j}\hat{B}_j-\hat{T}_j)>\left(x+\frac{\mu}{2}M\right)c_N}\nonumber\\
      &\quad\quad+ \sum_{n=0}^{\infty}\probability*{\max_{i\leq N}\sum_{j=1}^{\lfloor \gamma^{n+1}c_N\rfloor}\hat{A}_{i,j}\hat{B}_j-\sum_{j=1}^{\lfloor \gamma^{n}c_N\rfloor}\hat{T}_j>\left(x+\frac{\mu}{2}M\right)c_N}\nonumber\\
    &\quad \LimitN  \probability*{\sup_{t\in[0,1]}(X_t-\mu t)>x+\frac{\mu}{2}M}+\sum_{n=0}^{\infty}\left(1-\exp\left(-\frac{\gamma^{n+1}}{(x+\frac{\mu M}{2}+\gamma^n\mu_{\gamma})^{\beta}}\right)\right).\label{subeq: limit upper bound steady state}
\end{align}
It is clear that $\probability{\sup_{t\in[0,1]}(X_t-\mu t)>x+\frac{\mu M}{2}}\longrightarrow 0$ as $M\to\infty$. The sum in \eqref{subeq: limit upper bound steady state} is finite and also converges to 0 as $M\to\infty$, as the ratio test gives us that 
$$
\lim_{n\to\infty}\frac{\left(1-\exp\left(-\frac{\gamma^{n+2}}{(x+\frac{\mu M}{2}+\gamma^{n+1}\mu_{\gamma})^{\beta}}\right)\right)}{\left(1-\exp\left(-\frac{\gamma^{n+1}}{(x+\frac{\mu M}{2}+\gamma^n\mu_{\gamma})^{\beta}}\right)\right)}= \frac{1}{\gamma^{\beta-1}}<1.
$$
Hence, we can choose for all $\epsilon>0$ a $K$ large enough such that
$$
\sum_{n=K}^{\infty}\left(1-\exp\left(-\frac{\gamma^{n+1}}{(x+\frac{\mu M}{2}+\gamma^n\mu_{\gamma})^{\beta}}\right)\right)<\epsilon,
$$
and it is obvious that
$$
\sum_{n=0}^K\left(1-\exp\left(-\frac{\gamma^{n+1}}{(x+\frac{\mu M}{2}+\gamma^n\mu_{\gamma})^{\beta}}\right)\right)\longrightarrow 0,
$$
as $M\to\infty$.
Thus, we can conclude that the both terms in \eqref{subeq: limit upper bound steady state} converge to 0 as $M\to\infty$, and consequently, both terms in \eqref{subeq: upper bound 2} asymptotically vanish. Returning to the upper bound for the steady-state tail probability of the maximum waiting time given in \eqref{subeq: upper bound steady state}, we can conclude that
$$
\limsup_{N\to\infty}\probability*{\max_{i\leq N}W_i(\infty)>xc_N}\leq \probability*{\sup_{t>0}(X_t-\mu t)>x}.
$$
\end{proof}
We have proven process convergence of the maximum transient waiting time and we have proven steady state convergence. The limiting processes have the form of a supremum of Fr\'echet-distributed random variables with a negative drift. We now give an explicit expression of the cumulative distribution function.
\begin{proof}[Proof of Proposition \ref{prop: marginal steady state}]
To prove Equation \eqref{eq: steady state explicit prob}, we provide sharp lower and upper bounds of $\probability{\sup_{t>0}(X_t-\mu t)<x}$. First of all, let $\delta>0$. We have that
$$
\probability*{X_{\delta}-\mu \delta<x}=\exp\left(-\frac{\delta}{(x+\mu\delta)^{\beta}}\right).
$$
Obviously, we can bound $\probability*{\sup_{t>0}(X_t-\mu t)<x}$ from above as
$$
\probability*{\sup_{t>0}(X_t-\mu t)<x}<\probability*{\cap_{i=1}^{\infty}X_{i\delta}-\mu i\delta<x}.
$$
We can write $X_{2\delta}=\max(\hat{X}_{\delta},X_{\delta})$. From this relation, we know that, if $X_{\delta}-\delta<x$, then $X_{2\delta}-2\delta<x$ if and only if $\hat{X}_{\delta}-2\delta<x$. Therefore, 
$$
\probability{X_{\delta}-\delta<x\cap X_{2\delta}-2\delta<x }=\probability{X_{\delta}-\delta<x\cap \hat{X}_{\delta}-2\delta<x }=\probability{X_{\delta}-\delta<x}\probability{\hat{X}_{\delta}-2\delta<x }.
$$
Thus, in general, the cumulative distribution function of $\sup_{t>0}(X_t-\mu t)$ is bounded from above as
\begin{align}
    \probability*{\sup_{t>0}(X_t-\mu t)<x}<\probability*{\cap_{i=1}^{\infty}X_{i\delta}-\mu i\delta<x}=\prod_{i=1}^{\infty}\exp\left(-\frac{\delta}{(x+\mu i\delta)^{\beta}}\right).\label{eq: upper bound exact expression steady state}
\end{align}
We can find a lower bound as well, because both $X_t$ and $\mu t$ are non-decreasing in $t$ we know that $\sup_{s\in((i-1)\delta,i\delta]}(X_{s}-\mu s)\leq X_{i\delta}-\mu(i-1)\delta $. Therefore,
$$
\probability*{\sup_{t>0}(X_t-\mu t)<x}=\probability*{\cap_{i=1}^{\infty}\sup_{s\in((i-1)\delta,i\delta]}(X_{s}-\mu s)<x}>\probability*{\cap_{i=1}^{\infty}X_{i\delta}-\mu (i-1)\delta<x}.
$$
With a similar derivation as before, we have that
$$
\probability*{\cap_{i=1}^{\infty}X_{i\delta}-\mu (i-1)\delta<x}=\prod_{i=1}^{\infty}\exp\left(-\frac{\delta}{(x+\mu (i-1)\delta)^{\beta}}\right).
$$
Now, we can rewrite this expression as
$$
\prod_{i=0}^{\infty}\exp\left(-\frac{\delta}{(x+\mu i\delta)^{\beta}}\right)=\exp\left(-\frac{\delta}{(\mu\delta)^{\beta}}\sum_{i=0}^{\infty}\frac{1}{\left(\frac{x}{(\mu\delta)}+i\right)^{\beta}}\right)=\exp\left(-\frac{\delta}{(\mu\delta)^{\beta}}\zeta\left(\beta,\frac{x}{\mu\delta}\right)\right),
$$
where $\zeta(\beta,x)$ is the Hurwitz zeta function, cf.\ \cite[Eq. (1.10)]{adamchik1998some}. We have that
$$
\lim_{\delta \downarrow 0}\frac{\delta}{(\mu\delta)^{\beta}}\zeta\left(\beta,\frac{x}{\mu\delta}\right)=\frac{1}{\mu(\beta-1)x^{\beta-1}}.
$$
The same limit holds for the upper bound in \eqref{eq: upper bound exact expression steady state}, thus Equation \eqref{eq: steady state explicit prob} follows. The proof of Equation \eqref{eq: transient explicit prob} is analogous, and follows from the fact that
$$
\lim_{\delta \downarrow 0}\frac{\delta}{(\mu\delta)^{\beta}}\sum_{i=0}^{\lfloor\frac{t}{\delta}\rfloor}\frac{1}{\left(\frac{x}{(\mu\delta)}+i\right)^{\beta}}=\frac{1}{\mu^{\beta}(\beta-1)}\left(\frac{1}{(x/\mu)^{\beta-1}}-\frac{1}{(x/\mu+t)^{\beta-1}}\right).
$$
\end{proof}
\subsection*{Acknowledgments}
This work is part of the research program Complexity in high-tech manufacturing, (partly) financed by the Dutch Research Council (NWO) through contract 438.16.121.
\newpage
\pagestyle{plain}
\bibliographystyle{plain}

\bibliography{refs}
\end{document}